\newcommand{\until}[1]{\{1,\dots, #1\}}
\newcommand{\subscr}[2]{#1_{\textup{#2}}}
\newcommand{\setdef}[2]{\{#1 \; : \; #2\}}
\newcommand{\map}[3]{#1: #2 \rightarrow #3}
\newcommand{\bigintersect}{\bigcap}
\newcommand{\integers}{\mathbb{Z}} % integer numbers
\newcommand{\reals}{\mathbb{R}} % real numbers
\newcommand{\real}{\ensuremath{\mathbb{R}}}
\newcommand{\realnonnegative}{\ensuremath{\reals}_{\geq0}}
\renewcommand{\natural}{{\mathbb{N}}}
\newcommand{\integer}{{\mathbb{Z}}}
\newcommand{\Z}{\mathbb{Z}}
\newcommand{\N}{\mathbb{N}}
\newcommand{\R}{\reals}
\newcommand{\eps}{\varepsilon}
\newcommand{\G}{G}    %for graphs
\newcommand{\E}{E}    %for graphs
\newcommand{\V}{V}    %for graphs
\newcommand{\K}{\mathcal{K}}
\newcommand{\D}{\mathcal{D}}
\newcommand{\Eq}{\mathcal{E}}
\newcommand{\xave}{\subscr{x}{ave}}%\newcommand{\xave}{x_{ave}}
\newcommand{\1}{\mathbf{1}} % vector of ones
\newcommand{\diag}{\operatorname{diag}} % diagonal part
\newcommand{\neigh}[1]{\mathcal{N}_{#1}}
\newcommand{\qd}{\texttt{q}}    % quantizzatore uniforme statico
\newcommand{\qh}{\subscr{{\tt q}}{h}}     % quantizzatore con isteresi
\renewcommand{\S}{\mathcal{S}}    %for graphs
\newcommand{\dist}{\rm dist}
\newcommand{\indeg}[1]{d_{\textup{in},#1}}
\newcommand{\outdeg}[1]{d_{\textup{out},#1}}
\newcommand{\dmax}{\subscr{d}{max}}
\newcommand{\interior}[1]{\operatorname{int}{(#1)}}
\newcommand{\dst}{\displaystyle}
\newcommand{\sym}{{\rm Sym}}
\DeclareMathOperator*{\bigcart}{\raisebox{-0.3ex}{\text{\Large$\times$}}}
\def\be{\begin{equation}}
\def\ee{\end{equation}}
\def\ba{\begin{array}}
\def\ea{\end{array}}
\def\eqa{\begin{eqnarray}}
\def\eqe{\end{eqnarray}}
\newtheorem{example}{Example}
\newtheorem{remark}{Remark}
\newtheorem{assumption}{Assumption}
\newtheorem{theorem}{Theorem}
\newtheorem{lemma}{Lemma}
\newtheorem{corollary}{Corollary}
\newtheorem{proposition}{Proposition}
\def\QED{\hfill \vrule height 7pt width 7pt depth 0pt\medskip}
\newcommand{\caratheodory}{Carath\'{e}odory\ }
\begin{document}
\title{Discontinuities and hysteresis in quantized average consensus
\thanks{An abridged version of this paper has been presented at the 8th IFAC Symposium on Nonlinear Control Systems, Bologna, Italy, September 2010.
The work of the first and third author is partially supported by
MIUR PRIN 2008 {\it Sistemi distribuiti su larga scala: stima,
ottimizzazione e controllo, con applicazioni.} The work of the
second author is partially supported by MIUR PRIN 2008 {\it
Advanced methods for feedback control of uncertain nonlinear
systems},  Progetto Ricercatori AST 2009 and a Johns Hopkins
University Applied Physics Laboratory grant.}
}

\author{Francesca Ceragioli\thanks{Dipartimento di Matematica, Politecnico di Torino, Torino, Italy.
\texttt{francesca.ceragioli@polito.it}.} \and 
Claudio De Persis\thanks{Lab. Mechanical Automation and Mechatronics, University of Twente, Enschede, The Netherlands and 
Dip. Informatica e Sistemistica, Sapienza Universit\`a di Roma, Roma, Italy.
\texttt{C.DePersis@ctw.utwente.nl}.} \and 
Paolo Frasca\thanks{Dipartimento di Matematica, Politecnico di Torino, Torino, Italy.
\texttt{paolo.frasca@polito.it}.}
}

\maketitle

\begin{abstract}
We consider continuous-time average consensus dynamics in which
the agents' states are communicated through uniform quantizers.
Solutions to the resulting system are defined in the Krasowskii
sense  and are proven to converge to conditions of ``practical
consensus''. To cope with undesired chattering phenomena we
introduce a hysteretic quantizer, and we study the convergence
properties of the resulting dynamics by a hybrid system approach.
\end{abstract}

\section{Introduction}
Communication constraints play a major role in consensus and
related problems of distributed computation and control. Such
constraints can be represented by a graph of available
communication links among agents, together with further
restrictions on which information can be exchanged across
links. Recently, the constraint of quantization, that is of
communication restricted to a discrete set of symbols, has
received significant attention. Although most  works  to-date
have dealt with discrete-time dynamics, it is worth considering
the same restrictions in the context of continuous-time dynamics, because the dynamics of
the agents is naturally described by continuous-time systems in many applications.
An example of this is robotic networks.
One might argue that it is possible to study the effect of quantization on
continuous-time systems by considering their discretized or sampled-data model.
However, implementing consensus control algorithms by discretizing the dynamics requires
implicitly that all the agents sample synchronously with the same
clock, a requirement which is difficult to  satisfy in practice. This
lack of synchronicity may disrupt the convergence properties of
the algorithm and thus asks for a different approach.
This paper proposes an approach which deals with
 consensus problems in continuous-time
without relying on sampled-data systems: consensus is achieved by quantized measurements which are transmitted asynchronously. Indeed, the quantization of the states induces a partition of the
 space into regions and  each agent
transmits quantized information about its state only when the
state crosses the boundary of the quantization regions. In this
sense communication among the agents takes place asynchronously.
Throughout the paper no assumption is imposed on the resolution of
the quantizers, which therefore can be also very coarse. Hence,
this paper specifically aims at giving a rigorous treatment of
continuous-time average consensus dynamics with uniform
quantization in communications. It is an established fact that
consensus problems can be reformulated  in terms of feedback
control systems: as expected, when quantization enters the loop,
the stabilization problem becomes more challenging. From a
mathematical point of view, a consequence of quantization is that
we obtain a system with a discontinuous righthand side, which is
not guaranteed to admit solutions in classical sense. This paper
proves  that classical or \caratheodory solutions  may in fact
not exist: considering solutions in some generalized sense is thus
unavoidable.

The literature provides
different approaches to the technical problem of having  systems
with discontinuous righthand sides (see~\cite{JC:08-csm,OH:79} for
a review of these topics). Tools from the theory of discontinuous differential equations   and nonsmooth analysis have already been applied in consensus problems in \cite{JC:06b,JC:08}.
Here we focus on Krasowskii solutions essentially for two reasons: effectiveness and generality.
With regard to effectiveness, there are many  results available concerning the  existence and
continuation of Krasowskii solutions, as well as a complete
Lyapunov theory \cite{JPA-AC:84,AB-FC:99}.
With respect to generality, since the set
of Krasowskii solutions includes Filippov and \caratheodory
solutions, results about Krasowskii solutions also hold for
Filippov and \caratheodory solutions, provided that they exist. On the
other hand, the set of Krasowskii solutions may be too large. In
particular, from a practical
point of view it may contain sliding modes which induce chattering phenomena.
In the context of quantized consensus, chattering amounts to fast
information transmission between the agents. This is undesirable
because it results in algorithms which require large bandwidth
communication channels to be implemented. To cope with this issue,
we propose the use of a quantizer endowed with a hysteretic
mechanism, and study the resulting dynamics by a hybrid system
approach.  Specifically, we provide an estimate of the data rate
needed to implement the quantized continuous-time consensus
algorithm. These results can be of interest to other application
fields, including load balancing problems and real-time control
systems.

With respect to earlier literature, our contribution  is twofold. On one hand, we give a mathematical treatment, in terms of differential equations with discontinuous righthand sides, of a continuous-time consensus system under uniformly quantized communication of the states. We do this when the communication graph  is only weakly connected and weight balanced.
After showing basic properties of solutions, such as existence, boundedness and average preservation, we prove convergence to a set containing the equilibria of the system. This set depends on the communication graph, and is reduced to the set of equilibria in the particular case of symmetric graphs.
On the other hand, our paper is  the first to propose the application of hysteretic quantizers to solve a consensus problem with data rate constraints.
Preliminarly, we prove that the system is well-posed, in the sense that a solution  exists, is
forward unique, and the set of switching times is locally finite. The main results consist in proving convergence and estimating the required data rate.

In our paper, we provide convergence results for both the Krasowskii and the
hysteretic dynamics discussed above. Due to the constraint of static uniform quantization we cannot obtain exact consensus, but we can obtain approximations of the consensus condition which we informally refer to as ``practical consensus''. Similar conditions have been obtained elsewhere in the literature, which has already considered some related problems of quantized consensus.
A number of publications have discussed --mostly in discrete-time systems-- several options to deal with the quantization constraint, namely uniform deterministic
quantizers~\cite{PF-RC-FF-SZ:08,AN-AO-AO-JNT:09}, uniform
randomized quantizers~\cite{TCA-MJC-MGR:08}, logarithmic
quantizers~\cite{RC-FB-SZ:10}, and adaptive
quantizers~\cite{TL-MF-LX-JFZ:09}. Moreover, quantization has been
considered also in {\it gossip} consensus
algorithms~\cite{AK-TB-RS:07,RC-FF-PF-SZ:10}. Regarding
continuous-time systems, relevant bibliography
includes~\cite{JC:06b}, which discusses
discontinuous differential equations with consensus applications, and~\cite{JY-SMLV-DL:08}, which studies a rendezvous algorithm in which each agent tracks  another agent assigned to it by a quantized control law.
A recent paper (\cite{DVD-KHJ:10}) is also related: using graph-theoretical tools, the authors study a quantized consensus problem, under the assumption that the communication graph is a tree.

The present  paper is organized as follows. In Section~\ref{sec:Problem} we
recall definitions and results from graph theory and about average
consensus dynamics without quantization, recalling how the
consensus problem can be reformulated in terms of a stability
problem. We also present the problem of quantization in a
continuous-time setting. In Section~\ref{sec:UnifQuant} we
introduce state quantization and Krasowskii solutions and we
study the fundamental properties of the system: we compute the set
of equilibria, we prove average preservation, and we deduce
asymptotic and finite-time convergence results. Then, in
Section~\ref{sec:HysteresisQuant} we define and study the system
under a hysteretic quantizer, in terms of a hybrid system. After
proving that chattering can not  occur, we study the
fundamental properties of this system: equilibria, average
preservation and  convergence. Some simulations are given
in Section~\ref{sec:Simulations} which illustrate our results and point to possible future research directions, which are discussed in our conclusions in Section 5.

{\bf Notations.} Given a subset $A$ of the Euclidean Space $\R^N$,
we denote as $\overline A$ its topological closure, by
$\interior{A}$ its interior and by $\partial A$ its boundary.
Given $N\in\N$, we let $\1$ ($\mathbf{0}$) be the $N\times 1$ vector whose
entries are 1 ($0$), $I$ be the $N$-dimensional identity matrix and
$\Omega=I-N^{-1}\1\1^*$, where the symbol $^*$ denotes conjugate transpose.
$\| \cdot \|$ denotes the Euclidean norm both for vectors and matrices.

\section{Preliminaries}\label{sec:Problem}

\subsection{Graph theory}
Let there be a weighted (directed) graph $\G=(\V,\E, A)$,
consisting of a node set $\V=\until{N}$, an edge set $\E\subset
\V\times\V$ and an adjacency matrix $A\in
\realnonnegative^{N\times N}$ such that $A_{ij}>0$ if $(j,i)\in
\E$, and $A_{ij}=0$ if
$(j,i)\not\in \E$. For every node, we define the set of its in-neighbors as $\neigh{i}=\setdef{j\in\V}{(j,i)\in \E}$. We assume no self-loops in the graph, that is %$A_{ii}=0$
$i\not\in\neigh{i}$ for every $i\in\V$.
Nodes (vertices) are referred to as agents, edges as links.
Let $\indeg{i}:=\sum_{j=1}^n A_{ij}$ and $\outdeg{j}=\sum_{i=1}^n
A_{ij}$ be, respectively, the in-degree and the out-degree of node
$i\in\V$. A graph is said to be {\em weight-balanced} if the
out-degree of each node equals its in-degree. Let $D=\diag(A\1)$
be the diagonal matrix whose diagonal entries are the in-degrees
of each node, which are equal to the number of incoming edges if the nonzero entries of the adjacency matrix $A$ are all
equal to $1$. Let $L=D-A$ be the Laplacian matrix of the graph
$\G$. Note that $L\1=\mathbf{0}$, and that $\1^*L=\mathbf{0}^*$ if
and only if $\G$ is weight-balanced. A path in a graph is an
ordered list of edges. Given an edge $(i,j)$, we shall refer to
$i$ and to $j$ as the tail and the head of the edge, respectively.
An oriented path is an ordered list of edges such that the head of
each edge is equal to the tail of the following one. The graph
$\G$ is said to be strongly connected if for any $i,j\in \V$ there
is an oriented path from $i$ to $j$ in $\G$. Instead, it is said
to be weakly connected if for each pair of nodes $i,j$ there
exists a path which connects $i$ and $j$. Observe that weakly
connected weight-balanced graphs are strongly connected
graphs~\cite[Proposition~2]{JC:08}. Recall the following result,
which can be derived from~\cite[Theorem~1.37]{FB-JC-SM:09} and
\cite[Formula~(1)~and~Section~2.2]{JC:08}.

\begin{lemma}\label{lemma:t1}
Let $\G$ be a weighted graph and suppose it is weight-balanced and
weakly connected. Let $L$ be its Laplacian matrix. Then:
\begin{description}
\item{(i)} The matrix $\sym(L):=\frac{L+L^\ast}{2}$ is positive semi-definite.
\item{(ii)} Denoted by $\lambda_2(\sym(L))$ the smallest
non-zero eigenvalue of $\sym(L)$,
$$
x^\ast \sym(L) x \ge \lambda_2(\sym(L))
\|x-\frac{\1\1^\ast}{N}x\|^2\;, $$ for all $x\in \R^N$, where
$\|\cdot\|$ denotes the Euclidean norm.
\end{description}
\end{lemma}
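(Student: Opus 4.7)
The plan is to recognize that, under the weight-balanced hypothesis, $\sym(L)$ is itself the Laplacian of an undirected weighted graph, and then to read off both claims from standard facts about such Laplacians. Concretely, since $L=D-A$ with $D=\diag(A\1)$ real diagonal, I have $\sym(L)=D-\tfrac{1}{2}(A+A^*)$. Setting $A_s=\tfrac{1}{2}(A+A^*)$, which is symmetric with nonnegative entries, the weight-balance condition $A\1=A^*\1$ gives $A_s\1=A\1$, so $D=\diag(A_s\1)$ as well. Therefore $\sym(L)$ is the Laplacian of the undirected weighted graph with adjacency matrix $A_s$.

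For part~(i), the classical identity $x^*\sym(L)x=\tfrac{1}{2}\sum_{i,j}(A_s)_{ij}(x_i-x_j)^2$ is manifestly nonnegative, so $\sym(L)\succeq 0$. For part~(ii) I first identify the kernel: since $\G$ is weakly connected and weight-balanced it is strongly connected (as recalled just before the lemma), hence its underlying undirected graph, whose edge set coincides with that of the graph with adjacency $A_s$, is connected. By the classical theorem for undirected-graph Laplacians this implies $\ker\sym(L)=\mathrm{span}\{\1\}$ and all remaining eigenvalues are strictly positive, so $\lambda_2(\sym(L))>0$ is well defined as the smallest nonzero eigenvalue.

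To conclude part~(ii) I apply the spectral theorem to the real symmetric matrix $\sym(L)$. For any $x\in\R^N$ I decompose $x=\alpha\1+y$ with $y\perp\1$; a direct computation shows $y=\Omega x$. Since $\1\in\ker\sym(L)$, one has $\sym(L)x=\sym(L)y$, and since $\mathrm{span}\{\1\}^\perp$ is $\sym(L)$-invariant with spectrum $\{\lambda_2(\sym(L)),\dots,\lambda_N(\sym(L))\}$, the Rayleigh quotient bound yields $x^*\sym(L)x=y^*\sym(L)y\ge \lambda_2(\sym(L))\,\|y\|^2=\lambda_2(\sym(L))\,\|\Omega x\|^2$, which is the claimed inequality.

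The main conceptual step is the very first one: the weight-balanced hypothesis is essential, because it is precisely what makes $\sym(L)$ a genuine graph Laplacian. Without it the identity $D=\diag(A_s\1)$ fails, $\sym(L)$ need not be positive semidefinite, and its kernel can be larger than $\mathrm{span}\{\1\}$. Once this reduction is in place, both (i) and (ii) are routine consequences of the cited references and of the spectral theorem for symmetric matrices.
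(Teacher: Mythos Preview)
Your argument is correct. Note that the paper does not actually give its own proof of this lemma: it simply states that the result ``can be derived from~[Theorem~1.37, Bullo--Cort\'es--Mart\'inez] and [Formula~(1) and Section~2.2, Cort\'es~2008]'' and moves on. What you have written is a clean, self-contained reconstruction of the standard argument underlying those references: the key observation that weight-balance forces $D=\diag(A_s\1)$, so that $\sym(L)$ is the Laplacian of the connected undirected graph with adjacency $A_s=\tfrac12(A+A^*)$, after which both (i) and (ii) are immediate from classical undirected-Laplacian theory and the Rayleigh quotient.

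One minor remark: your detour through strong connectivity is unnecessary. Weak connectivity of $\G$ \emph{by definition} means the underlying undirected graph is connected, and the edge set of that graph coincides with the support of $A_s$; you do not need to invoke the ``weakly connected $+$ weight-balanced $\Rightarrow$ strongly connected'' fact at all for this step. This does not affect correctness, only economy.
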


\bigskip
Sometimes, it may be convenient to restrict our attention to {\em
symmetric} graphs, that is graphs such that $A=A^*$. For a
symmetric graph there is no distinction between strong and weak
connectedness, so in that case we shall just say that the graph is
{\em connected}.

\subsection{Feedback consensus dynamics}
% old title: {Non quantized average consensus dynamics}

Let $\map{x}{\realnonnegative}{\real^N}$ be a time dependent
vector representing the agents' states. Its dynamics can be
written in terms of the control system
\begin{equation}\label{eq:control}
\dot{x}=u,
\end{equation}
where $x, u\in\R ^N$. Our aim is to construct a control law
$\map{u}{\R}{\R^N}$ such that, for all initial conditions,
solutions to~\eqref{eq:control} satisfy the {\em average
consensus} condition, that is
$$\lim_{t\to \infty}x(t)=\xave(0)\1,$$
where we let $\xave(t)=N^{-1}\1^*x(t).$ It turns out that one such
control  can be given in the feedback form $u=-Lx$ so that the
implemented system becomes
\begin{equation}\label{eq:StandAlgo}
\dot x(t)=-L x(t).
\end{equation}
Componentwise, this reads as
\begin{equation}\label{eq:StandAlgoCompwise}
\dot x_i(t)=\sum_{j\in \neigh{i}} A_{ij}\left(x_j(t)-x_i(t)\right),
\qquad \forall\,i\in \V
\end{equation}
where we recall $\neigh{i}$ is the set of the neighbors of agent $i$.

The following result, which can be deduced from~\cite{JC:08},
gives the weakest conditions for system~\eqref{eq:StandAlgo}
to converge to average consensus.
\begin{lemma}\label{lemma:StandCons}
If the weighted graph $\G$ is weakly connected and
weight-balanced, and $x(t)$ satisfies~\eqref{eq:StandAlgo}, then
$$\lim_{t\to
\infty}x(t)=\xave(0)\1.$$
\end{lemma}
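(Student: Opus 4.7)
The plan is to combine an average-preservation argument with an exponential decay estimate obtained through a quadratic Lyapunov function, relying heavily on Lemma~\ref{lemma:t1}.

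First I would establish invariance of the average. Since $\G$ is weight-balanced, we have $\1^\ast L=\mathbf{0}^\ast$, so differentiating $\xave(t)=N^{-1}\1^\ast x(t)$ along~\eqref{eq:StandAlgo} gives $\dot\xave(t)=-N^{-1}\1^\ast L x(t)=0$. Hence $\xave(t)=\xave(0)$ for all $t\geq 0$, which already pins down the only possible limit consistent with conservation of the mean.

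Next I would introduce the disagreement vector $e(t):=x(t)-\xave(0)\1=\Omega x(t)$ and the candidate Lyapunov function $V(x)=\tfrac{1}{2}\|e\|^2$. Two observations make the computation clean: (a) $\1^\ast e(t)=0$ for all $t$, as a direct consequence of the average-preservation step, so $\Omega e=e$ and hence $\|e-N^{-1}\1\1^\ast e\|=\|e\|$; and (b) $L\1=\mathbf{0}$, so $Lx=Le$. Differentiating,
\begin{equation*}
\dot V = e^\ast \dot x = -e^\ast L e = -e^\ast \sym(L)\, e,
\end{equation*}
where the last equality holds because the skew-symmetric part of $L$ contributes zero to the quadratic form. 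Applying Lemma~\ref{lemma:t1}(ii) together with observation (a) yields
\begin{equation*}
\dot V \leq -\lambda_2(\sym(L))\, \|e\|^2 = -2\lambda_2(\sym(L))\, V.
\end{equation*}

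Finally I would invoke Grönwall's inequality to conclude $V(t)\leq V(0)\, e^{-2\lambda_2(\sym(L))\, t}$, and observe that under the standing hypotheses (weakly connected and weight-balanced) Lemma~\ref{lemma:t1}(i) together with the structure of weakly connected weight-balanced graphs ensures $\lambda_2(\sym(L))>0$. Exponential decay of $V$ then gives $\|x(t)-\xave(0)\1\|\to 0$ as $t\to\infty$.

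I expect no serious obstacle in this argument; the only point that needs a touch of care is justifying strict positivity of $\lambda_2(\sym(L))$, which uses that a weakly connected weight-balanced graph is in fact strongly connected (cited as \cite[Proposition~2]{JC:08} in the excerpt), so that $\sym(L)$ is the Laplacian of a connected symmetric weighted graph and therefore has a simple zero eigenvalue with eigenspace $\operatorname{span}(\1)$.
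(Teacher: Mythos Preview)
Your argument is correct. The paper does not actually prove Lemma~\ref{lemma:StandCons}: it merely states that the result ``can be deduced from~\cite{JC:08}'' and moves on. Your Lyapunov approach---average preservation via $\1^\ast L=\mathbf{0}^\ast$, then exponential decay of $V(e)=\tfrac12\|e\|^2$ through Lemma~\ref{lemma:t1}(ii)---is precisely the technique the paper itself deploys later for the quantized dynamics (see the proofs of Theorem~\ref{theor:ConvergenceStrip} and Corollary~\ref{c9}), so your proof is fully in the spirit of the paper and supplies the details it omits. Your closing remark on why $\lambda_2(\sym(L))>0$ is also on target: the paper records just before Lemma~\ref{lemma:t1} that weakly connected weight-balanced graphs are strongly connected, which is what makes $\sym(L)$ the Laplacian of a connected undirected weighted graph with a simple zero eigenvalue.
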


In view of the above result, from now on we shall make use of the following standing assumption, unless otherwise stated.

\begin{assumption}\label{ass:a1}
The communication graph $\G$ is weakly connected and
weight-balanced.
\end{assumption}

\subsection{Quantized consensus dynamics}
It is clear that the dynamics~\eqref{eq:StandAlgo} is a rather idealized version of what can actually be implemented in a real control system. A very natural issue, as discussed in the introduction, is quantization.
Let $q$ be a quantizer, that is a map $\map{q}{\reals}{\S}$ with
$\S$ a discrete subset of $\reals$ (If $z\in \reals^d$, $q(z)$ is
meant componentwise). Several quantized dynamics based on~\eqref{eq:StandAlgo} are of interest from the point of view of the
applications: indeed, quantization may be inherent either to
communication or sensing among agents, or to the computation of
the feedback control, or to its application to the system. In
other words quantization can occur at the communication, sensor,
computation or actuator level. If the sensed/communicated data about the neighbors states is quantized, due to the use of a digital lossless channel or to finite-precision sensors, the resulting dynamics is
\begin{equation}\label{eq:StateQuant}
\dot x(t)=-L q(x(t)).
\end{equation}

Despite its interest, the dynamics (\ref{eq:StateQuant}) has not received much attention yet, although its discrete-time counterpart has been widely studied, for instance in~\cite{AN-AO-AO-JNT:09,PF-RC-FF-SZ:08}.
In the present paper, we shall focus on this dynamics, considering two different quantizers. First, we take a uniform static quantizer; then, we design a hysteretic quantizer which prevents chattering.

%QUI COMINCIA LA PARTE SUI QUANTIZZATORI dello STATO (senza isteresi).
\section{Krasowskii quantized dynamics}\label{sec:UnifQuant}
Let us consider a uniform quantizer
$\map{\qd}{\reals}{\Delta\integers}$, defined by
$$\qd (z)=\left\lfloor \frac{z}{\Delta}+\frac{1}{2} \right\rfloor\Delta.$$
Note that $|\qd(z)-z|\le \frac{\Delta}{2}$. Moreover if $x\in
\reals^N$, we let $\qd (x)= (\qd (x_1),\ldots,\qd (x_N))^*$.
We shall then consider the quantized dynamics
\begin{equation}\label{eq:qAlgo}
\dot x=-L\qd(x).
\end{equation}
Since the righthand side of~\eqref{eq:qAlgo} is discontinuous, it
is important to specify in which sense solutions have to be
intended.
One definition of solution is in the sense of {\em
\caratheodory}: an absolutely continuous function $x(t)$ is a
\caratheodory solution to~\eqref{eq:qAlgo} on an interval $I$ if
it satisfies (\ref{eq:qAlgo}) for almost every $t\in I$. Such
solution is said to be complete if  $I= [0, +\infty )$. However,
this natural definition is not suitable to study the system at
hand, because of the following fact:

\begin{proposition}[\caratheodory solutions]\label{prop:CarSol}
There are weakly connected and weight-balanced
graphs and   initial conditions for which no \caratheodory
solution to~\eqref{eq:qAlgo} exists. Moreover, in
these cases, there is a positive-measure set of initial
conditions such that \caratheodory solutions starting from that
set  are not complete.
\end{proposition}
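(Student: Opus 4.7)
The plan is to prove both assertions through a single minimal example: the symmetric two-node graph with $A_{12}=A_{21}=1$, which is weight-balanced and connected, so Assumption~\ref{ass:a1} holds. The key structural observation I would exploit is that on the interior of any product of quantization cells $\qd(x)$ is constant, hence the right-hand side of~\eqref{eq:qAlgo} is piecewise constant: any Carathéodory solution is therefore forced to be affine while $x$ stays in a single cell, and the only possible obstruction to existence or continuation occurs when $x$ meets a cell boundary where the one-sided vector fields are incompatible with absolute continuity.

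To establish the first assertion, I would take the initial condition $x(0)=(a,\Delta/2)$ for any $a\in(-\Delta/2,\Delta/2)$. At this point the floor convention gives $\qd(\Delta/2)=\Delta$ and $\qd(a)=0$, so $-L\qd(x(0))=(\Delta,-\Delta)^{\ast}$. The core of the argument is to show that no absolutely continuous function can satisfy $\dot x=-L\qd(x)$ almost everywhere on any right-neighborhood $[0,\delta]$. For $\delta$ small enough continuity keeps $x_1$ in the cell $(-\Delta/2,\Delta/2)$, so the relevant dichotomy reduces to the value of $x_2(s)$: where $x_2(s)<\Delta/2$ one has $\qd(x_2)=0$ and hence $\dot x_2(s)=0$, whereas where $x_2(s)\ge\Delta/2$ one has $\qd(x_2)=\Delta$ and hence $\dot x_2(s)=-\Delta$. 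This forces $\dot x_2\le 0$ a.e., so $x_2$ is non-increasing and remains in $(-\infty,\Delta/2]$. I would then rule out both subcases: if $x_2\equiv \Delta/2$ on $[0,\delta]$, then $\dot x_2=0$ a.e., contradicting the value $-\Delta$ demanded on the full-measure set $\{s:x_2(s)=\Delta/2\}$; if instead $x_2(s^{\ast})<\Delta/2$ for some $s^\ast$, the infimum of entry times is a $t^\ast\ge 0$ with $x_2(t^\ast)=\Delta/2$ by continuity, while $\dot x_2=0$ on $(t^\ast,t^\ast+\eta)$ forces $x_2$ to stay at $\Delta/2$ there, contradicting the definition of $t^\ast$. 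This rules out Carathéodory solutions from $x(0)$.

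For the second assertion, I would take the open set $U=\{(a,b):\ a\in(-\Delta/2,\Delta/2),\ b\in(\Delta/2,3\Delta/2),\ a+b<\Delta\}$, which has positive Lebesgue measure. On each cell the vector field is constant, so the unique local Carathéodory solution starting in $U$ is the affine motion $x(t)=(a+\Delta t,\,b-\Delta t)$; the condition $a+b<\Delta$ ensures that $x_2$ reaches $\Delta/2$ strictly before $x_1$ leaves its cell, at time $t^\ast=(b-\Delta/2)/\Delta$, landing at the bad configuration $(a+b-\Delta/2,\,\Delta/2)$ treated in the previous step. Invoking the non-extension argument from the first part at that point shows that the solution cannot be prolonged past $t^\ast$, so it is not complete. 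The main obstacle is the rigorous non-extension argument itself: one has to reconcile absolute continuity of $x_2$ with the fact that $-L\qd$ equals $-\Delta$ on the closed half-plane $\{x_2\ge\Delta/2\}$ and vanishes on $\{x_2<\Delta/2\}$, ruling out in particular any sliding-mode behaviour along the discontinuity $x_2=\Delta/2$; everything else reduces to elementary analysis of the affine pieces.
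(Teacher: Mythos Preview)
Your argument is correct, but it takes a genuinely different route from the paper. The paper first isolates a general geometric criterion: at a discontinuity hyperplane $S_j=\{x_j=(k+\tfrac12)\Delta\}$ it computes the normal components $f^{+}_{\perp}$ and $f^{-}_{\perp}=f^{+}_{\perp}+\indeg{j}\Delta$ of the piecewise constant field on the two sides, and observes that whenever $f^{+}_{\perp}<0<f^{-}_{\perp}$ no \caratheodory solution can exist at $\hat x$, while nearby solutions reach $S_j$ in finite time. It then realizes this strictly repelling configuration with a three-node path graph ($\indeg{j}=2$), for which $f^{+}_{\perp}=-\Delta$ and $f^{-}_{\perp}=+\Delta$.

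Your two-node example is more minimal, but note that with $\indeg{j}=1$ the difference $f^{-}_{\perp}-f^{+}_{\perp}=\Delta$ forces one of the two normal components to vanish: at your bad point one has $f^{+}_{\perp}=-\Delta$ and $f^{-}_{\perp}=0$. So the paper's ``both sides point strictly inward'' criterion does not apply, and you cannot just quote it; instead you need exactly the direct monotonicity-plus-infimum argument you give to rule out a solution that sits on, or drops below, the line $x_2=\Delta/2$. That argument is sound (you should also note, as you do implicitly for $x_1$, that continuity keeps $x_2$ in $(-\tfrac{\Delta}{2},\tfrac{3\Delta}{2})$ for small $\delta$, so the dichotomy really is between the two cells). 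The payoff of your approach is a fully self-contained, two-dimensional computation; the payoff of the paper's approach is a reusable criterion that also feeds directly into the sliding-mode discussion (Remark~\ref{rem:sliding-mode}), since the strictly opposing normals are precisely what forces Krasowskii solutions to slide along $S_j$.
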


\begin{proof}
The righthand side of~\eqref{eq:qAlgo} is constant over open hypercubes with edge of length $\Delta$, and surfaces of discontinuity are hyperplanes.
Each hyperplane is orthogonal to one of the elements of the
canonical basis of $\reals^N$, $e_1,\dots,e_N$. Let us consider
one such hyperplane, namely one which is orthogonal to $e_j$ and
let us denote it by $S_j$. Let us consider a point $\hat x\in
S_j$, i.e. such that $\hat x_j=(k+\frac 12)\Delta$ for some $k\in
\integers$, and such that if $i\neq j$,
then $\hat x_i\neq (h+\frac 12)\Delta$ for any $h\in \integers$.
Let $I(\hat x)$ be a neighborhood of $\hat x$ such that, for any
$x\in I(\hat x)$, it holds that $x_j\ne (h+\frac{1}{2})\Delta$ for
any integer $h\ne k$, and   $x_i\not = (h+\frac
12)\Delta$ if $i\neq j$ for any $h\in \integers$. Let $I^+(\hat x)=\{ x\in
I(\hat x): x_j> (k+\frac 12)\Delta\} $ and $I^-(\hat x)=\{ x\in
I(\hat x): x_j< (k+\frac 12)\Delta\} $. Let $q$ be the value of
$\qd(x)$ in $I^+(\hat x)$, $f^+$ be the value of $-L \qd(x)$ in
$I^+(\hat x)$ and $f^-$ be the value of $-L \qd(x)$ in $I^-(\hat
x)$.
Then, letting the nonzero entries of the adjacency
matrix to be all equal to $1$, the components $f^+$ and $f^-$
which are orthogonal to $S_j$ have simple expressions and
interpretation:
\begin{align*}
f^+_\perp &:=\langle f^+,e_j\rangle=f^+_j=\sum_{k\in\neigh{j}}{q_k}-\indeg{j} q_j,\\
f^-_\perp &:=\langle f^-,e_j\rangle=f^-_j%=f^+_j+L_{jj}
=f^+_j+\indeg{j}\Delta,
\end{align*}
where $\indeg{j}$ is the in-degree of node $j$. If either $f^+_\perp$ is
positive or $f^+_\perp<-\indeg{j}\Delta$, then $f^+_\perp$ and $f^-_\perp$ have the same sign, otherwise  they have opposite sign.
A numerical example for the latter case is as follows: let $\indeg{j}=2$, $q_j=2\Delta$ and
$\setdef{q_k}{k\in \neigh{j}}=\{\Delta,2\Delta\}$, then $f^+_j=-\Delta$ and
$f^-_j=\Delta$. In this case, $f^+_\perp<0$ and $f^-_\perp>0$, so that there are no \caratheodory solutions starting at $\hat x$.
Moreover, solutions starting in a (sufficiently small) neighborhood of $\hat x$ reach the surface in finite time, and can not be extended further.
\end{proof}

\begin{example}\label{ex1}
Consider a graph with adjacency matrix whose entries are:
$A_{12}=A_{21}=1,A_{23}=A_{32}=1,
A_{11}=A_{22}=A_{33}=A_{13}=A_{31}=0$ and the initial condition
$\hat x=(\Delta, \frac{3}{2}\Delta, 2\Delta)$.
Clearly the graph satisfies Assumption~\ref{ass:a1}.
Note that the righthand side of \eqref{eq:qAlgo} is discontinuous
at $\hat x$, being on the plane $x_2=\frac{3}{2}\Delta$. Following
the proof of Proposition \ref{prop:CarSol} one can easily check
that $f^+_2=-\Delta$ and $f^-_2=\Delta$. A possible
\caratheodory solution issuing from $\hat x$ could not leave the
surface $x_2=\frac{3}{2}\Delta$ in the direction of decreasing
$x_2$ because $f^-_2=\Delta>0$ nor could leave the surface in the
direction of increasing $x_2$, since $f^+_2=-\Delta<0$. The only
possibility for a solution would be to remain on the surface but such a solution would  not satisfy the
definition of a \caratheodory solution. Hence, there are no
\caratheodory solutions issuing from $\hat x$.
\end{example}

In view of Proposition~\ref{prop:CarSol}, in the sequel of this paper we shall consider solutions in a more general sense, which is due to Krasowskii.
An absolutely continuous function $x(t)$ is a Krasowskii solution to \eqref{eq:qAlgo} on an interval $I$  if it satisfies at almost every $t\in I$ the differential inclusion
\begin{equation}\label{eq:qAlgo-inclusion}
\dot x\in \K(-L\qd(x)),
\end{equation}
where
$$\K(-L\qd(x))=\bigintersect_{\delta >0}{\overline {\rm co}}\, (-L \qd (B(x,\delta))),$$
and $B(x,\delta)$ is the Euclidean ball of radius $\delta$ centered in $x$.
The solution is said to be complete if $I=[0, +\infty )$.
Note that, thanks to Theorem~1 in \cite{BP-SSS:87}, we have
\begin{equation}\label{Kcartesianproduct}
\K(-L\qd(x))=-L\K(\qd(x))\subseteq -L(\bigcart_{i\in V}\K(\qd(x_{i}))=\bigcart_{i\in V}(\sum_{j\in \neigh{i}} A_{ij}\left(\K(\qd (x_j))-\K(\qd (x_i))\right))
\end{equation}
 where $\bigcart_i$ denotes  the Cartesian product of the sets indexed by $i$.
Representations of the
map $\qd$ and of the set-valued map $\K\qd$ are given in
Figure~\ref{fig:Kq-map}.

\begin{figure*}[htbp]
\begin{center} \begin{tabular}{cc}
    \includegraphics[width=.49\textwidth]{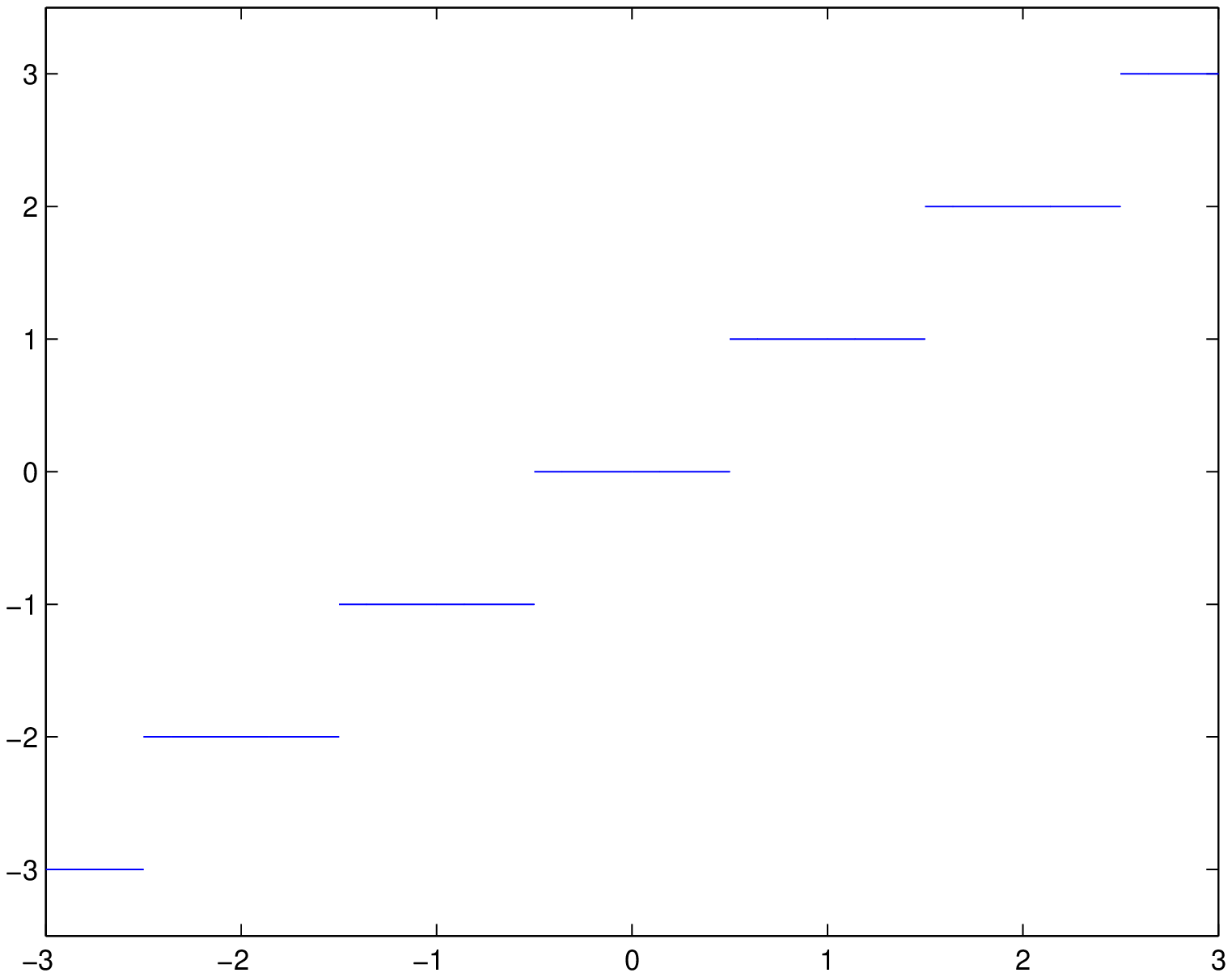}&
    \includegraphics[width=.49\textwidth]{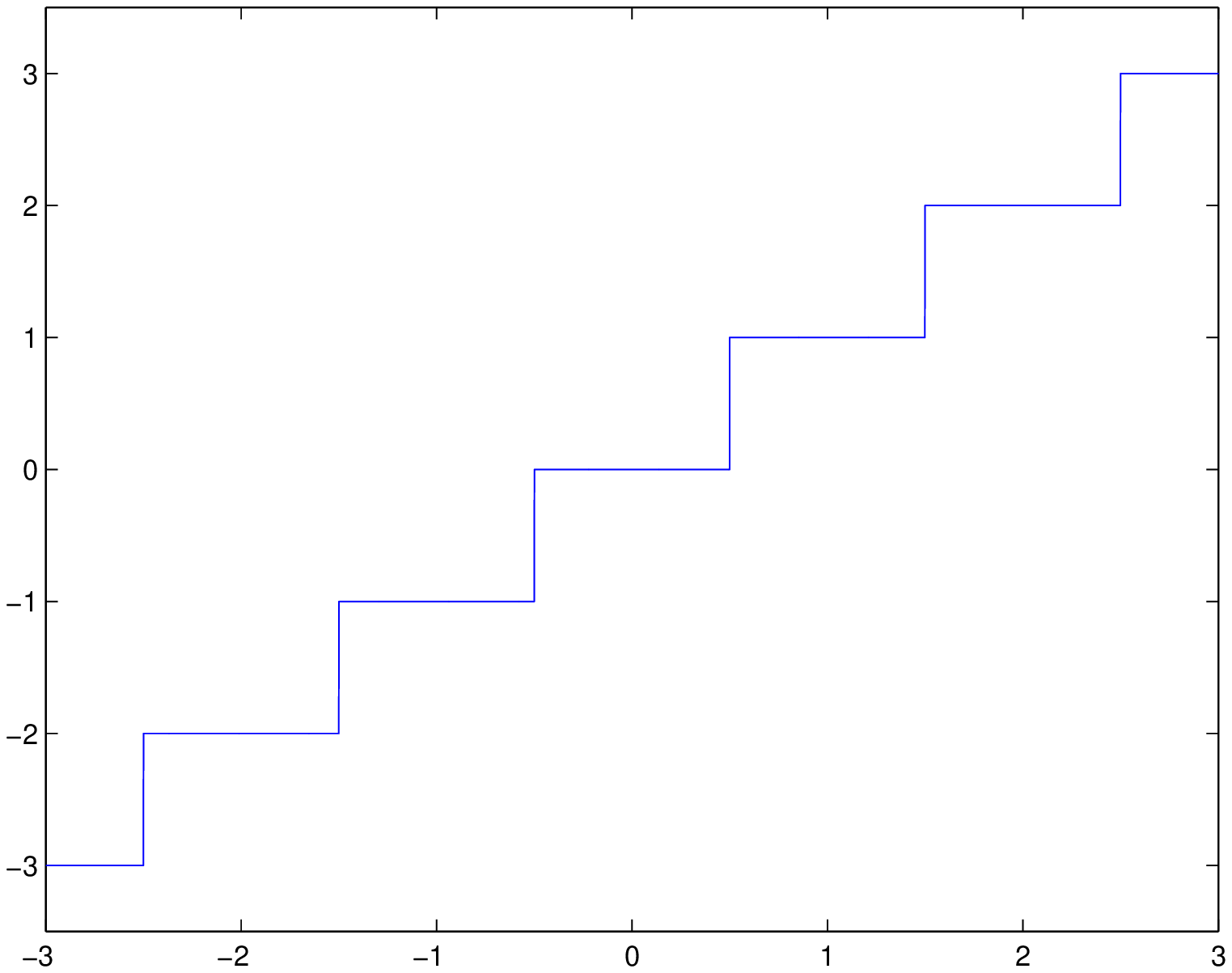}
    \end{tabular}
    \caption{Visualization of the map $\qd(x)$ and the set-valued map $\K \qd(x)$, when $\Delta=1$.}\label{fig:Kq-map}
\end{center} \end{figure*}

\begin{lemma}[Krasowskii solutions] \label{lemma:Krsol} For any $x^0\in \R^N$ there exists a complete Krasowskii solution $x(t)$  to \eqref{eq:qAlgo} such that $x(0)=x^0$.
\end{lemma}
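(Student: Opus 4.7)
The plan is to invoke standard existence results for differential inclusions with upper semi-continuous, nonempty, convex, compact values, and then argue that local solutions can be extended to the whole positive halfline. The key advantage of passing to the Krasowskii regularization is precisely that it repairs the regularity of the set-valued map regardless of how badly behaved the original righthand side is.

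First I would verify that the map $x \mapsto \K(-L\qd(x))$ satisfies the hypotheses of a classical existence theorem for differential inclusions (such as the one in Aubin--Cellina cited as \cite{JPA-AC:84}). Since $\qd$ is locally bounded and $L$ is a fixed matrix, the function $-L\qd$ is locally bounded; by general properties of the Krasowskii operator, the set $\K(-L\qd(x))$ is then nonempty, convex, compact, and the set-valued map is upper semi-continuous with respect to $x$. This is enough to guarantee the existence of an absolutely continuous local solution to \eqref{eq:qAlgo-inclusion} starting from any $x^0 \in \R^N$, defined on some interval $[0,T)$.

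Second, I would show that every such maximal local solution is in fact defined on all of $[0,+\infty)$. The natural tool is a linear growth bound. Using the uniform quantization estimate $|\qd(z)-z|\le \Delta/2$ componentwise, one gets $\|\qd(x)\|\le \|x\|+\sqrt{N}\,\Delta/2$, whence every element $v \in \K(-L\qd(x))$ satisfies $\|v\|\le \|L\|\bigl(\|x\|+\sqrt{N}\,\Delta/2\bigr)$. Applied to a local solution $x(t)$, this bound combined with Gronwall's inequality rules out blow-up in finite time, so the solution can be continued beyond any finite right endpoint. Hence the maximal solution is defined on $[0,+\infty)$, i.e., it is complete.

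The only step requiring real care is the verification that the Krasowskii regularization is upper semi-continuous with nonempty convex compact values; once this is in place, existence is classical and completeness follows from the explicit linear bound above. I do not expect any genuine obstacle, since both pieces are standard in the theory of discontinuous differential equations, but writing down the upper semi-continuity cleanly (and noting that the Cartesian product structure \eqref{Kcartesianproduct} allows one to reduce matters to the one-dimensional set-valued map $\K\qd$ depicted in Figure~\ref{fig:Kq-map}) is the part that deserves explicit mention.
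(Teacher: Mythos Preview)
Your proposal is correct, and the local-existence half is essentially what the paper does: it notes that $-L\qd$ is measurable and locally bounded, which is exactly the hypothesis guaranteeing that the Krasowskii regularization is upper semi-continuous with nonempty convex compact values, and then invokes a standard existence result.

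The genuine difference is in how completeness is obtained. You use the linear growth bound $\|v\|\le \|L\|\bigl(\|x\|+\sqrt{N}\,\Delta/2\bigr)$ together with Gronwall, which is the quickest route to ruling out finite-time blow-up and suffices for the lemma as stated. The paper instead proves a strictly stronger structural fact: for every Krasowskii solution the functions $m(t)=\min_i x_i(t)$ and $M(t)=\max_i x_i(t)$ are monotone (nonincreasing and nondecreasing, respectively), so that $m(0)\le x_i(t)\le M(0)$ for all $t\ge 0$ and all $i$. This requires a careful argument at the quantization thresholds, handling separately the cases where the minimum sits exactly at a point $(k-\tfrac12)\Delta$. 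The payoff is an invariance property specific to consensus dynamics---the convex hull of the states never expands---which your Gronwall argument does not yield. For the bare statement of the lemma your route is more economical; the paper's route buys a boundedness result that is natural for consensus problems and could in principle be reused elsewhere.
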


\begin{proof}
Local existence of Krasowskii solutions is guaranteed by the fact that the righthand side of \eqref{eq:qAlgo} is measurable and locally bounded (see \cite{OH:79}). By standard arguments completeness of solutions can be deduced by their boundedness (see, e.g., \cite{VNVS:60}).
We then prove that {\em solutions are bounded}.
Let $x(t) $ be a Krasowskii solution to \eqref{eq:qAlgo} such that $x(0)=x^0$.
Let $m(t)=\min\{ x_i(t), i\in V  \}$ and $M(t)=\max\{ x_i(t), i\in V \}$.  Let $t_0\geq 0$ be fixed.
We prove that $m(t)\geq m(t_0)$ for all $t\geq t_0$. Analogously it can be proven that $M(t)\leq M(t_0)$ for all
$t\ge t_0$. By taking $t_0=0$ we will then deduce that for any $t> 0$ and for any $i\in V$ one has $m(0)\leq x_i(t)\leq M(0)$, i.e. $x(t)$ is bounded.
We first consider the case $m(t_0)\not =(k-\frac{1}{2})\Delta$ for
any $k\in \mathbb Z$. Assume by contradiction that there exists
$\bar t >t_0$ such that $m(\bar t)<m(t_0)$ and let $\bar i\in V$
be such that $x_{\bar i}(\bar t)=m(\bar t)$. Note that $m(t)$ is a
continuous function since it is the minimum of a finite number of
continuous functions.  Then
there exists $\delta> 0$ such that $m(\bar
t-\delta)=m(t_0)$ and for all $t\in (\bar t-\delta, \bar t)$ one
has $\qd (m(t))=\qd (x_{\bar i}(t))=\qd (m(t_0))$ and $\qd
(x_i(t))\geq\qd (m(t_0))$ for all $i\in V$. We remark that for all
$t\in (\bar t-\delta, \bar t)$ it holds $\K (\qd(x_{\bar
i}(t)))=\{ \qd (m(t_0)) \}$ and $v\geq \qd (m(t_0))$ for all $v\in
\K (\qd(x_j(t)))$ and any $j\in{V}$. Since $\dot x_{\bar i}(t)\in
(-L\K(\qd (x(t)))_{\bar i}\subseteq \sum_{j\in \neigh{\bar i}}
A_{ij}\left(\K  \qd (x_j(t))-\K \qd (x_{\bar i}(t))\right)$, we
get that $\dot x_{\bar i}(t)\geq 0$ for almost all $t\in (\bar
t-\delta, \bar t)$. On the other hand, $x_{\bar i}(\bar t)=m(\bar
t) < m(t_0)=m(\bar t-\delta)\leq x_{\bar i}(\bar t-\delta)$. Then
there must exist a subset $I$ of $(\bar t-\delta, \bar t)$ such
that $I$ has positive measure and $\dot x_{\bar i}(t)< 0$ for
almost all $t\in I$, i.e.\ a contradiction.
We now consider the case in which $m(t_0) =(k-\frac{1}{2})\Delta$ for some  $k\in \mathbb Z$.
Let $t^*=\inf \{ t>t_0: m(t)<m(t_0)\}$, and assume by contradiction that $t^*<+\infty$. Note that $m(t)\geq m(t_0)$ for $t\in [t_0,t^*)$ and, since $m$ is continuous, one also has $m(t^*)=m(t_0)$.
Let us fix any $\bar t>t^*$ such that
$(k-\frac 32)\Delta<m(t)< m(t_0)$ for any $t\in (t^*,\bar t)$.
We take a sequence
$\{t_n\}_{n\in \mathbb N}$ such that $t_n\in (t^*,\bar t)$ for any $n$ and $t_n\to t^*$.
Since $m(t_n)<m(t_0)$,
thanks to  the  previously analyzed case, we get that $m(t)\geq m(t_n)$ for all $t\geq t_n$. By passing to the limit we get that $m(t)\geq m(t^*)=m(t_0)$ for all $t\geq t^*$ and finally we have that $m(t)\geq m(t_0)$ for all $t\geq t_0$,
 again a contradiction.
\end{proof}

We observe that Lemma~\ref{lemma:Krsol} does not guarantee uniqueness of solutions, as explained in the following remark.
\begin{remark}[Sliding mode]\label{rem:sliding-mode}
From the proof of Proposition~\ref{prop:CarSol}, we see that
if \caratheodory solutions originate from any point $\hat x\in
S_j$ such that $f^+_\perp<0$ and $f^-_\perp>0$, then they do not exist, while
Krasowkii solutions do. Krasowskii solutions starting in a
neighborhood of $\hat x$ reach the surface in finite time and
slide on it. We remark that such solutions are not backward
unique. A general discussion about these behaviors  can be found
in~\cite[page~51]{AFF:88}.
\end{remark}
%
%{C: Visto che abbiamo un esempio per cui non esistono
%soluzioni \caratheodory, direi di far vedere una soluzione alla
%Krasowskii usando lo stesso esempio.\\
%%
%P: ottima idea
%}

\addtocounter{example}{-1}

\begin{example}[Cont'd]
Consider again the system in Example \ref{ex1}. It is
straightforward to check that
\[
\K(-L\qd(\hat x))=\overline{{\rm co}}
\left\{
\left(\ba{r}\Delta\\ -\Delta\\ 0\ea\right),
\left(\ba{r}0\\ \Delta\\ -\Delta\ea\right)
\right\}
=
\left\{
v\in \real^3\,:\, v=\left(\ba{c}\lambda\Delta\\ (1-2\lambda)\Delta\\ -(1-\lambda)\Delta\ea\right),\; \lambda\in [0,1]
\right\}.
\]
Hence a Krasowskii solution issuing from $\hat x$ is the solution
to $\dot x_1 = \frac{\Delta}{2}$, $\dot x_2 = 0$, $\dot x_3 =
-\frac{\Delta}{2}$ with initial condition $\hat x$. On the other
hand, as explained previously in Example \ref{ex1}, no \caratheodory solution issuing from $\hat x$ exists.
\end{example}

%%%%%%%%%%%%%%%%%%%%%%%%

\medskip
A preliminary result establishes that the average of the states is
preserved by Krasowskii solutions to~\eqref{eq:qAlgo}: this fact will be a key step to
obtain many of the following results.
We recall that this result holds under the standing Assumption~\ref{ass:a1}: the same is true for all the following ones, with the exception of Theorem~\ref{theor:convergenceD}
and Proposition~\ref{prop:FiniteTimeConv} which require a stronger assumption.

\begin{lemma}[Average preservation]\label{lem:AveragePreserved}
Let $x(t)$  be a Krasowskii solution to \eqref{eq:qAlgo}. Then, $\xave(t) = \xave(0)$ for
all $t\ge 0$.
\end{lemma}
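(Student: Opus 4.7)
The plan is to differentiate $\xave(t)=\tfrac{1}{N}\1^\ast x(t)$ and show that this derivative vanishes almost everywhere, then invoke absolute continuity to conclude that $\xave$ is constant. Since $x(t)$ is absolutely continuous (by definition of a Krasowskii solution), so is the linear combination $\xave(t)=\tfrac{1}{N}\1^\ast x(t)$, and its derivative equals $\tfrac{1}{N}\1^\ast \dot x(t)$ wherever $\dot x(t)$ exists, i.e.\ almost everywhere.

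Next, I would use the differential inclusion $\dot x(t)\in \K(-L\qd(x(t)))$ together with the identity $\K(-L\qd(x))=-L\,\K(\qd(x))$ from equation~\eqref{Kcartesianproduct}. Multiplying on the left by $\1^\ast$ gives
\begin{equation*}
\1^\ast\,\K(-L\qd(x)) \;=\; -\,\1^\ast L\,\K(\qd(x)).
\end{equation*}
At this point the weight-balance hypothesis (Assumption~\ref{ass:a1}) enters in the crucial way: as recalled in the graph-theoretic preliminaries, $\G$ being weight-balanced is equivalent to $\1^\ast L=\mathbf{0}^\ast$. Therefore the right-hand side above is the singleton $\{0\}$, so every element of $\K(-L\qd(x))$ is annihilated by $\1^\ast$.

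Combining these observations, for almost every $t\ge 0$ we have $\1^\ast \dot x(t)=0$, hence $\tfrac{d}{dt}\xave(t)=0$ a.e. Since $\xave$ is absolutely continuous, this implies $\xave(t)=\xave(0)$ for all $t\ge 0$, which is the desired conclusion.

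There is really no serious obstacle in this argument: the only subtle point is to ensure that one is entitled to pass $\1^\ast$ inside the Krasowskii operator, which is handled cleanly by the already-established identity~\eqref{Kcartesianproduct}. Everything else reduces to the weight-balance property $\1^\ast L=\mathbf{0}^\ast$ and to the standard fact that an absolutely continuous function with zero derivative almost everywhere is constant.
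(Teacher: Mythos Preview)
Your proposal is correct and follows essentially the same approach as the paper: both differentiate $\xave$, use the identity $\K(-L\qd(x))=-L\,\K(\qd(x))$ from~\eqref{Kcartesianproduct}, apply $\1^\ast L=\mathbf{0}^\ast$ from weight-balance to conclude $\dot\xave=0$ a.e., and then invoke absolute continuity.
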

\begin{proof}
By definition, $\xave(t)= N^{-1}\1^*x(t)$, and then
$\dot x _{\rm ave}(t)=N^{-1}\1^*\dot x(t).$
By \eqref{eq:qAlgo-inclusion},
$$\dot x _{\rm ave}(t)\in N^{-1}\1^*\K(-L\qd(x(t)))\  {\rm for \,  a.e.t}. $$
Thanks to  the first equality in (\ref{Kcartesianproduct}) and the fact
that $\1^*L={\bf 0}^*$,
$$\dot x _{\rm ave}(t)\in  -N^{-1}\1^*L \K(\qd(x(t)))=\{ 0\}, $$ then  we get that $\dot x _{\rm ave}(t)=0 $ for a.e. $t$  and
finally that $\xave (t)$ is constant.
\end{proof}

%%%%%%%%%%%%%%%%%%%%%%%%%%%%%%%%%%%%%%%%%%%%%%%%%%%%%%%%%%%%%%%%%%%%%%%%%%%%%%%%%%%%%%%%%
\subsection{Graph-dependent convergence results}
A first set of results regards the limit behavior of
system~\eqref{eq:qAlgo}, which depends on
the quantizer and the graph topology. The following proposition
proves convergence of solutions to a certain set.
\begin{theorem}[Convergence]\label{theor:ConvergenceStrip}
If $x(t)$ is any Krasowskii solution to~\eqref{eq:qAlgo} and
$$M=\setdef{x\in\reals^N} {\frac{1}{\sqrt{N}}\|x-\xave(0)\1
\|\le\frac{||L||}{{\lambda_{2}(\sym(L))}}\frac{\Delta}{2}},
$$
 then
${\dist}(x(t),M)\to 0$ as $t\to +\infty$.
\end{theorem}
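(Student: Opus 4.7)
The plan is to use the disagreement-norm Lyapunov candidate
\[
V(x) \;=\; \tfrac{1}{2}\|x - \xave(0)\1\|^{2},
\]
which by Lemma~\ref{lem:AveragePreserved} coincides along any Krasowskii solution with $\tfrac{1}{2}\|\Omega x(t)\|^{2}$, so that $\nabla V(x(t)) = \Omega x(t)$. The target set $M$ is exactly the sublevel set $\{V \le \tfrac{N}{2}(\|L\|\Delta/(2\lambda_{2}(\sym(L))))^{2}\}$, so it suffices to show $V$ strictly decreases along every solution while $x(t)\notin M$, and then invoke a nonsmooth LaSalle-type theorem for differential inclusions (as in~\cite{AB-FC:99}) to conclude $\dist(x(t),M)\to 0$.

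For an absolutely continuous Krasowskii solution, the chain rule gives $\dot V(t) = (\Omega x(t))^{*}\dot x(t)$ for a.e.\ $t$, and by~\eqref{eq:qAlgo-inclusion}--\eqref{Kcartesianproduct} one may write $\dot x(t) = -Ly(t)$ for some measurable selection $y(t)\in \K\qd(x(t))$. Since the graph is weight-balanced, $\1^{*}L = \mathbf{0}^{*}$, hence $\Omega L = L$, so
\[
(\Omega x)^{*}(-Ly) \;=\; -x^{*}Ly \;=\; -x^{*}Lx \;-\; x^{*}L(y-x).
\]
Lemma~\ref{lemma:t1}(ii) yields $x^{*}Lx = x^{*}\sym(L)x \ge \lambda_{2}(\sym(L))\|\Omega x\|^{2}$. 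For the residual, every $y\in \K\qd(x)$ satisfies $|y_{i}-x_{i}|\le \Delta/2$ componentwise, so $\|y-x\|\le \sqrt{N}\,\Delta/2$; combined with $\|L^{*}x\|=\|L^{*}\Omega x\|\le \|L\|\,\|\Omega x\|$ (again using weight-balance), this gives
\[
|x^{*}L(y-x)|\;\le\; \|L^{*}x\|\,\|y-x\|\;\le\; \frac{\sqrt{N}\,\|L\|\,\Delta}{2}\,\|\Omega x\|.
\]

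Collecting the estimates produces, for a.e.\ $t$ and independently of the selection $y(t)$,
\[
\dot V(t) \;\le\; -\lambda_{2}(\sym(L))\,\|\Omega x(t)\|^{2} \;+\; \frac{\sqrt{N}\,\|L\|\,\Delta}{2}\,\|\Omega x(t)\|,
\]
whose right-hand side is strictly negative precisely when $\|\Omega x(t)\| > \sqrt{N}\,\|L\|\,\Delta/(2\lambda_{2}(\sym(L)))$, i.e.\ exactly when $x(t)\notin M$. Boundedness of the trajectory (Lemma~\ref{lemma:Krsol}) together with this strict-decrease-outside-$M$ property fulfil the hypotheses of the invariance principle for differential inclusions, yielding $\dist(x(t),M)\to 0$.

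The main obstacle is the Krasowskii bookkeeping: one must justify (i) the chain rule for $V\circ x(\cdot)$ when $x(\cdot)$ is merely absolutely continuous and $\dot x$ is a measurable selection of the upper-semicontinuous set-valued map $\K(-L\qd(\cdot))$, and (ii) the applicability of a nonsmooth LaSalle theorem to this set-valued right-hand side. Both are standard given~\eqref{Kcartesianproduct} and the fact that $\K(-L\qd(\cdot))$ has nonempty compact convex values; the genuine content of the argument is the simple but decisive bound $\|y-x\|\le \sqrt{N}\,\Delta/2$ valid for every $y\in \K\qd(x)$, combined with Lemma~\ref{lemma:t1}(ii).
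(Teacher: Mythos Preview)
Your proof is correct and follows essentially the same route as the paper's own argument: the paper introduces $y=\Omega x$, takes $V(y)=\tfrac12 y^*y$, writes $-y^*Lv=-y^*Ly-y^*L(v-x)$, and then applies Lemma~\ref{lemma:t1}(ii) together with $\|v-x\|\le\sqrt{N}\,\Delta/2$ to obtain the identical decrease estimate. Your only cosmetic deviations are that you keep $x$ as the variable (using $\Omega L=L$ and $L^*\1=\mathbf{0}$ instead of the change of variables) and bound the cross term as $\|L^*x\|\,\|y-x\|$ rather than $\|\Omega x\|\,\|L(v-x)\|$; both yield the same quantity $\|L\|\,\|\Omega x\|\,\sqrt{N}\,\Delta/2$.
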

\begin{proof}
Let $y(t)=\Omega x(t)=x(t)-\xave(t)\1 $. Then, $\dot y=\Omega \dot x\in \Omega \K(-L \qd(x)).$ Since $\K(\Omega(-L\qd(x)))=\Omega \K(-L\qd(x)) =\K(-L\qd(x))=-L\K \qd(x),$
we have $\dot y\in - L \K\qd(x).$
Consider the function $V(y)=\frac{1}{2}y^*y$ and let  $v\in\K\qd(x)$. Note that if $v\in \K\qd(x)$, then $\| v-x\|\leq
\sqrt N\frac{\Delta}{2}$. We have that
\begin{align*}
\nabla V(y)\cdot \dot y
=&-y^*Lv\\
=&-y^* L(x+v-x)\\
=& -y^*Lx-y^*L(v-x)\\
=& -y^*Ly-y^*L(v-x)\\
=& -y^*\sym(L)y-y^*L(v-x)\\
\le & -\lambda_2(\sym(L)) ||y||^2 + ||y||\,||L||\dst\frac{\Delta}{2} \sqrt{N}\\
=& -\lambda_2(\sym(L))||y|| \left(||y|| -
\dst\frac{||L||}{\lambda_2(\sym(L))} \dst\frac{\Delta}{2}
\sqrt{N}\right),
\end{align*}
where the inequality in the second-last line follows from
Lemma~\ref{lemma:t1}. This implies  convergence to the set
$$\setdef{y\in\reals^N}{\|y\|\le \frac{||L||}{\lambda_{2}(\sym(L))}\frac{\Delta}{2} \sqrt{N}}.$$
Finally the statement follows from average preservation in
Lemma~\ref{lem:AveragePreserved}.
\end{proof}

\smallskip

With a slight extension of the argument leading to Theorem~\ref{theor:ConvergenceStrip}, we can prove finite-time convergence to a set larger than the set $M$ in Theorem~\ref{theor:ConvergenceStrip}, and provide an estimate of the convergence speed.

\begin{corollary}[Finite-time convergence]\label{c9}
If $x(t)$ is any Krasowskii solution to~\eqref{eq:qAlgo}, then for
any $\eps\in(0,1)$ there exists a finite time $T(\varepsilon)$
such that $x(t)$ belongs to the set
\[
M(\varepsilon)=\setdef{x\in\reals^N}{\frac{1}{\sqrt{N}}\|x-
\xave(0)\1 \|\le \dst\frac{1}{1-\varepsilon}
\frac{||L||}{\lambda_2(\sym(L))}\frac{\Delta}{2}}.
\]
for all $t\ge T(\varepsilon)$.
\end{corollary}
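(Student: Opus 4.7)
The plan is to recycle the Lyapunov analysis already carried out in the proof of Theorem~\ref{theor:ConvergenceStrip}, extracting an additional factor depending on $\varepsilon$ from the chain of inequalities whenever the state lies outside $M(\varepsilon)$. As in that proof, I would introduce $y(t)=\Omega x(t)=x(t)-\xave(0)\1$ (using Lemma~\ref{lem:AveragePreserved} to identify $\xave(t)$ with $\xave(0)$) and the Lyapunov function $V(y)=\tfrac12 y^{\ast}y$. The key estimate from the proof of Theorem~\ref{theor:ConvergenceStrip} reads
\[
\nabla V(y)\cdot\dot y \;\le\; -\lambda_2(\sym(L))\,\|y\|\Bigl(\|y\|-R\Bigr),\qquad R:=\frac{\|L\|}{\lambda_2(\sym(L))}\,\frac{\Delta}{2}\sqrt{N}.
\]

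The next step is the ``slight extension'': note that $x\notin M(\varepsilon)$ is equivalent to $\|y\|>R/(1-\varepsilon)$, which in turn is equivalent to $R<(1-\varepsilon)\|y\|$, i.e.\ $\|y\|-R>\varepsilon\|y\|$. Substituting this into the bound above yields
\[
\nabla V(y)\cdot\dot y \;\le\; -\varepsilon\,\lambda_2(\sym(L))\,\|y\|^{2}
\;=\; -2\varepsilon\,\lambda_2(\sym(L))\,V(y)
\]
for almost every $t$ at which $x(t)\notin M(\varepsilon)$. By a standard comparison argument for absolutely continuous functions (valid under the Krasowskii differential inclusion), $V(y(t))$ decays at least exponentially with rate $2\varepsilon\lambda_2(\sym(L))$ for as long as the trajectory remains outside $M(\varepsilon)$.

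Finally, I would turn this exponential decay into an explicit finite time. Entry into $M(\varepsilon)$ corresponds to $V(y)\le \tfrac12 R^{2}/(1-\varepsilon)^{2}$, so the trajectory must enter $M(\varepsilon)$ no later than
\[
T(\varepsilon)=\frac{1}{2\varepsilon\,\lambda_2(\sym(L))}\,\log^{+}\!\left(\frac{2V(y(0))(1-\varepsilon)^{2}}{R^{2}}\right),
\]
with $\log^{+}=\max\{\log,0\}$ to cover the trivial case $x(0)\in M(\varepsilon)$. Once the trajectory enters $M(\varepsilon)$ it stays there, because on the boundary the strict inequality $\nabla V\cdot\dot y<0$ from the computation above forbids $V$ from increasing. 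The only mildly delicate point I anticipate is justifying that the inclusion-based Lyapunov inequality yields the pointwise comparison estimate on $V(y(t))$ — this follows because the bound holds for \emph{every} selection in the Krasowskii set-valued right-hand side, but it is worth stating explicitly rather than invoking by analogy with the previous proof.
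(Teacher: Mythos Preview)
Your proposal is correct and follows essentially the same approach as the paper: recycle the Lyapunov inequality from Theorem~\ref{theor:ConvergenceStrip}, observe that outside $M(\varepsilon)$ one has $\|y\|-R>\varepsilon\|y\|$, deduce the differential inequality $\dot V\le -2\varepsilon\lambda_2(\sym(L))V$, and integrate to obtain the explicit time $T(\varepsilon)$, which coincides with the paper's formula~\eqref{eq:Teps-unif}. Your added remarks on positive invariance of $M(\varepsilon)$ and on the validity of the comparison argument for Krasowskii solutions are points the paper leaves implicit, so if anything your write-up is slightly more careful.
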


\begin{proof}
Consider, as in the proof of Theorem~\ref{theor:ConvergenceStrip},
the differential inclusion $\dot y \in -L Kq(x)$, the function
$V(y)=\frac{1}{2}y^*y$ and let $v\in\K\qd(x)$. Then, as
before,
\begin{align*}
\nabla V(y)\cdot \dot y\le& -\lambda_2(\sym(L))||y||
\left(||y|| - \dst\frac{||L||}{\lambda_2(\sym(L))}
\dst\frac{\Delta}{2} \sqrt{N}\right)\;.
\end{align*}

From the latter, we see that, if
\begin{equation}\label{eq:Ineq-y}
||y(t)||  > \dst\frac{1}{1-\varepsilon}
\dst\frac{||L||}{\lambda_2(\sym(L))} \dst\frac{\Delta}{2}
\sqrt{N}\;,
\end{equation}
then
\begin{align*}
\nabla V(y)\cdot \dot y < &
-\varepsilon\lambda_2(\sym(L))||y||^2\\
= &-2\varepsilon\lambda_2(\sym(L))V(y)
\end{align*}

If at some time $t_0$ the condition~\eqref{eq:Ineq-y} is satisfied
by $y(t_0)$, then there exists $t>t_0$ such that the Lyapunov
function computed along the trajectories of~\eqref{eq:qAlgo}
satisfies
\[
V(y(t))\le {\rm e}^{-2\varepsilon\lambda_2(\sym(L)))(t-t_0)}
V(y(t_0))
\]
and therefore
\[
||y(t)|| \le {\rm e}^{-\varepsilon\lambda_2(\sym(L)))(t-t_0)}
||y(t_0)||\;.
\]
Assuming without loss of generality that $\|y(t_0)\|\not = 0$, from the latter inequality we conclude that, if at time $t_0$ the
condition~\eqref{eq:Ineq-y} is satisfied, then there exists a time
\begin{equation}\label{eq:Teps-unif}
T(\varepsilon)=\max \left\{ 0, \dst\frac{-1}{\varepsilon \lambda_2(\sym(L))}
 \ln \left( \dst\frac{1}{1-\varepsilon}
\dst\frac{||L||}{\lambda_2(\sym(L))} \dst\frac{\Delta }{2}
\dst\frac{\sqrt{N}}{||y(t_0)||} \right) \right\}
\end{equation}
such that $||y(t)||$ satisfies
\begin{equation*}
||y(t)|| \le \dst\frac{1}{1-\varepsilon}
\dst\frac{||L||}{\lambda_2(\sym(L))} \dst\frac{\Delta}{2}
\sqrt{N}\;,
\end{equation*}
for any $t\ge t_0+T(\varepsilon)$. The thesis then follows,
recalling the definition of $y(t)$ and
Lemma~\ref{lem:AveragePreserved}.
\end{proof}

The above convergence results assert that the error induced by
quantization, with respect to the non-quantized consensus
dynamics~\eqref{eq:StandAlgo}, can be made arbitrarily small by
decreasing the quantization error. Up to a $\sqrt{N}$ factor, due
to the length of the vector, the committed error is proportional
to $\frac{||L||}{\lambda_{2}},$ and then depends on the network.
It is plain that it would be of interest to state a result of
convergence to a stronger practical consensus condition, in which
the committed error does not depend on the network topology but
only on the quantizer precision. This issue is the topic of the
next paragraph.

%%%%%%%%%%%%%%%%%%%%%%%%%%%%%%%%%%%%%%%%%%%%%%%%%%%%%%%%%%%%%%%%%%%%%%%%%%%%%%%%%%%%%%%%%%%%%%%%%%%

\subsection{Equilibria}
In this paragraph, we shall describe the equilibria of the
system~\eqref{eq:qAlgo}, which depend on the quantizer precision
only. Hence, proving convergence to equilibria turns out to be a
way to prove a practical consensus condition which does not depend
on the network.
The following proposition characterizes the equilibria of the system. We recall that $x_0$ is a {\em (Krasowskii) equilibrium} if the function $x(t)\equiv x_0$ is a (Krasowskii) solution, that is if $\mathbf{0}\in \K(-L\qd(x_0))$.
Let
$$
{\D }=\{ x\in \reals^N: \exists k\in \integer\ \  such \ that\   \qd (x_i)=\Delta k,\,  \forall \,  i\in \V\}.
$$
\begin{proposition}[Equilibria]\label{prop:equilibria}
The set of Krasowskii equilibria of~\eqref{eq:qAlgo} is
$\overline\D$.
\end{proposition}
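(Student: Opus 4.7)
The plan is to reformulate the equilibrium condition on $x_0$ as a one-dimensional intersection condition on the pointwise Krasowskii sets $\K\qd(x_{0,i})$, and then match it against an explicit description of $\overline\D$.

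\emph{Step 1 (reformulation).} Since $\G$ is weakly connected and weight-balanced, it is strongly connected, hence $\ker L = \operatorname{span}(\1)$. Combined with (\ref{Kcartesianproduct}), namely $\K(-L\qd(x_0)) = -L\,\K\qd(x_0) \subseteq -L\bigcart_{i\in \V}\K\qd(x_{0,i})$, the condition $\mathbf{0}\in\K(-L\qd(x_0))$ becomes: there exists $c\in\R$ such that $c\1\in\K\qd(x_0)$, which in particular forces $c\in\K\qd(x_{0,i})$ for every $i$.

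\emph{Step 2 (one-dimensional description).} From the definition of the uniform quantizer, for each $z\in\R$ the set $\K\qd(z)$ is either the singleton $\{\qd(z)\}\subset\Delta\Z$ (at continuity points) or the closed interval $[(k-1)\Delta,k\Delta]$ when $z = (k-\tfrac12)\Delta$. A direct consequence is the membership rule: $k\Delta\in\K\qd(z)$ iff $z\in[(k-\tfrac12)\Delta,(k+\tfrac12)\Delta]$. Likewise, since $\D$ is the disjoint union of the half-open cubes $[(k-\tfrac12)\Delta,(k+\tfrac12)\Delta)^N$, one has $\overline\D=\bigcup_{k\in\Z}[(k-\tfrac12)\Delta,(k+\tfrac12)\Delta]^N$.

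\emph{Step 3 (easy direction, $\overline\D\subseteq$ equilibria).} For $x\in\overline\D$, pick $k$ with $x_i\in[(k-\tfrac12)\Delta,(k+\tfrac12)\Delta]$ for all $i$. For any $\delta>0$, a slight leftward perturbation of any component equal to $(k+\tfrac12)\Delta$ produces $y\in B(x,\delta)$ with $\qd(y) = k\Delta\1$, so $k\Delta\1\in\K\qd(x)$, and therefore $\mathbf{0} = -L(k\Delta\1)\in -L\K\qd(x) = \K(-L\qd(x))$.

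\emph{Step 4 (main direction, equilibria $\subseteq\overline\D$).} Let $x_0$ be a Krasowskii equilibrium, and by Step 1 pick $c\in\R$ with $c\in\K\qd(x_{0,i})$ for every $i$. I claim $c$ may be replaced by an integer multiple $k\Delta$, $k\in\Z$. Indeed, if some $\K\qd(x_{0,i^*})$ is a singleton, it forces $c\in\Delta\Z$ directly. Otherwise every $\K\qd(x_{0,i})$ equals $[(j_i-1)\Delta,j_i\Delta]$ for some $j_i\in\Z$, and nonemptiness of the intersection $[\max_i(j_i-1)\Delta,\min_i j_i\Delta]$ enforces $\max_i j_i-\min_i j_i\le 1$, whence this intersection automatically contains $(\min_i j_i)\Delta\in\Delta\Z$. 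Either way there is $k\in\Z$ with $k\Delta\in\K\qd(x_{0,i})$ for all $i$, and by the membership rule of Step 2 this means $x_{0,i}\in[(k-\tfrac12)\Delta,(k+\tfrac12)\Delta]$ for all $i$, that is, $x_0\in\overline\D$.

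The main technical point is the ``integer promotion'' in Step 4: upgrading the real number $c$ to an integer multiple of $\Delta$. The case split on whether any component sits in the interior of a quantization cell keeps this clean, while the Cartesian-product structure captured in (\ref{Kcartesianproduct}) carries Step 1.
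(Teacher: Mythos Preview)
Your proof is correct and follows essentially the same approach as the paper's: both reduce the equilibrium condition to $\ker L \cap \K\qd(x_0)\neq\emptyset$ using $\ker L=\operatorname{span}\1$ and the Cartesian-product inclusion~(\ref{Kcartesianproduct}), and then match against $\overline\D$ via a one-dimensional analysis of $\K\qd$. The only cosmetic differences are that the paper obtains $\overline\D\subseteq E$ by first showing $\interior\D\subset E$ and invoking closedness of $E$ (upper semicontinuity) rather than your direct perturbation, and organizes the reverse inclusion by splitting on whether the common value lies in $\Delta\Z$ rather than by your ``integer promotion'' argument.
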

\begin{proof}
Note that $L\qd(x)$ is zero in $\D$, and is discontinuous on the
boundary of $\D$. Let us define the set of Krasowskii equilibria
as $E=\{ x\in \reals ^N: \mathbf{0}\in -L\K(\qd(x))\}$, and let
$\tilde E=\{ x\in \reals ^N: \mathbf{0}\in -L(\times
_i\K(\qd(x_{i})))\}$, where  we recall that $\times_i$ denotes the
Cartesian product of the sets indexed by $i$. Since Theorem~1
in~\cite{BP-SSS:87} implies that $ \K(\qd(x))\subseteq \times
_i\K(\qd(x_{i}))$, we have that $E\subseteq \tilde E$.
In order to prove that $\D = E$, we will first prove that
$\interior{\D} \subset E$. Since $E$ is closed due to the
fact that the set-valued map $ \K(\qd(\cdot ))$ is upper semicontinuous (see, e.g., the definition of upper semi-continuity
given in \cite{KD:92}), then also $\overline \D \subseteq
E\subseteq \tilde E$. Later we will prove that $\tilde E\subseteq
\overline \D$. These two facts imply that $\overline \D = E= \tilde E$, and namely our statement.\\
Let us prove that $\D \subset E$. Let us assume $x_0\in
{\rm int\,} \D$. Since $\qd$ is continuous at $x_0$, then
$\K(\qd(x_0))=\{\qd(x_0)\}$ and  $\K(-L\qd(x_0))=\{-L\qd(x_0)\}=\{-L(\Delta k\1)\}=\{-\Delta
kL\1\}=\{{\bf 0}\}$, i.e. $x_0\in E$. The points $x\in
\overline\D$  also belong to $E$  thanks to the fact that
$E$  is closed.
Then, let us prove that $\tilde E\subseteq \overline \D$. $x_0\in
\tilde E$ if there exists $v\in \times _i  \K(\qd(x_{0_i}))$
such that $Lv=0$. This  is equivalent to the fact that $\ker L\cap
\times _i  \K(\qd(x_{0_i}))\not =\emptyset$. Since $\ker
L={\rm span} \1$, there exists $v\in \ker L\cap \times _i  \K(\qd(x_{0_i}))$ if there exists $\lambda\in \R$ such that
$v=\lambda \1$, i.e.  $v_i=\lambda $ for any $i\in \V$ and $v_i\in
\K(\qd(x_{0_i}))$. Such $\lambda$ can be either $\lambda
=\Delta k$ for some $k\in \integers$, or $\lambda \not =\Delta k$
for any $k\in \integers$. In the first case, for any $i\in\V$ we have
that
$\{ x_i: \lambda\in \K(\qd (x_i)) \}=\left[
(k-\frac{1}{2})\Delta, (k+\frac{1}{2})\Delta\right]$. In the
second case, we have that
$\{ x_i: \lambda\in \K(\qd (x_i)) \}=\{(\lfloor
\frac{\lambda}{\Delta}\rfloor +\frac{1}{2} )\Delta  \}$. Finally
we get that if $v=\lambda \1\in \times _i  \K(\qd(x_{0_i}))$
then for every $i\in\V$ either $x_{0_i}\in \left[
(k-\frac{1}{2})\Delta, (k+\frac{1}{2})\Delta\right]$ for some
$k\in \integers$, or $x_{0_i}={(k+\frac 12)\Delta}$ for some $k\in
\integers$, i.e. $x_0\in \overline\D$.
\end{proof}

\bigskip

We now prove that $\overline\D$ is strongly invariant, i.e.
there are no  trajectories exiting $\overline\D$.
\begin{proposition}[Strong invariance]\label{lemma:StrongInvariance}
If $x(t)$ is a Krasowskii solution to~\eqref{eq:qAlgo} such that
$x(0)\in \overline\D$, then $x(t)\in \overline\D$ for
all $t\geq 0$.
\end{proposition}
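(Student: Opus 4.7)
The plan is to reduce the statement to the min--max monotonicity already proven inside the proof of Lemma~\ref{lemma:Krsol}. The first step is to read off the geometric structure of $\overline{\D}$ from its definition. A point $x$ lies in $\D$ exactly when all its coordinates belong to a single half-open cell $[(k-\frac{1}{2})\Delta,(k+\frac{1}{2})\Delta)$ for some common $k\in\Z$, so taking closures yields
\[
\overline{\D}=\bigcup_{k\in\Z}\,\left[(k-\tfrac{1}{2})\Delta,(k+\tfrac{1}{2})\Delta\right]^N.
\]
Equivalently, $x\in\overline{\D}$ iff there exists $k\in\Z$ such that $(k-\frac{1}{2})\Delta\le x_i\le(k+\frac{1}{2})\Delta$ for every $i\in\V$.

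Given a Krasowskii solution $x(t)$ with $x(0)\in\overline{\D}$, I would fix one such $k$ and set $m(0)=\min_{i} x_i(0)$, $M(0)=\max_{i} x_i(0)$, which then satisfy $(k-\frac{1}{2})\Delta\le m(0)\le M(0)\le(k+\frac{1}{2})\Delta$. At this stage I would invoke the boundedness argument carried out in the proof of Lemma~\ref{lemma:Krsol}: along any Krasowskii solution the map $t\mapsto \min_{i} x_i(t)$ is non-decreasing and $t\mapsto \max_{i} x_i(t)$ is non-increasing, so $m(0)\le x_i(t)\le M(0)$ for every $i\in\V$ and every $t\ge 0$. Chaining the two sets of inequalities forces $x_i(t)\in[(k-\frac{1}{2})\Delta,(k+\frac{1}{2})\Delta]$ for every $i$ and every $t\ge 0$, i.e.\ $x(t)\in\overline{\D}$, as required.

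I do not anticipate a serious obstacle: the result is essentially a corollary of the fact that each individual quantization cell is a trapping region for both the coordinatewise min and max. The only mild subtlety is that points on the common face of two adjacent cells (those with some coordinates equal to $(k+\frac{1}{2})\Delta$) admit more than one valid $k$, but the choice is immaterial---any valid $k$ yields the same confinement. Note also that this argument does not rely on the uniqueness of Krasowskii solutions, as required, since the monotonicity of $m$ and $M$ holds for \emph{every} Krasowskii solution.
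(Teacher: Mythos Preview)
Your argument is correct and takes a genuinely different route from the paper's own proof. The paper splits into two cases: for $x^0\in\interior{\D}$ it observes that $\K L\qd(x^0)=\{\mathbf{0}\}$, so the interior is trivially invariant; for $x^0\in\partial\D$ it argues by contradiction, tracking the coordinate $x_{i^*}$ that first escapes above $(k^0+\tfrac12)\Delta$ and showing directly that $\dot x_{i^*}\le 0$ on the relevant interval, which is essentially a bespoke re-derivation of the max-monotonicity for this special configuration. You instead recognise that the monotonicity of $m(t)=\min_i x_i(t)$ and $M(t)=\max_i x_i(t)$ established in the proof of Lemma~\ref{lemma:Krsol} already does all the work: once $[m(0),M(0)]\subset[(k-\tfrac12)\Delta,(k+\tfrac12)\Delta]$, the trapping of all coordinates in that interval is immediate. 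Your approach is shorter and avoids duplicating an argument already present in the paper; the paper's approach, on the other hand, is self-contained and makes the role of the vector field on $\partial\D$ more explicit. Your remark that the choice of $k$ at a shared face is immaterial is also correct and worth keeping.
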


\begin{proof}
Recall that $$\overline \D=\setdef{x\in \R^N}{\exists k\in \Z \text{ s. t. }
(k-\frac12)\Delta\le x_i\le (k+\frac12)\Delta, \: \forall i\in\V
}.$$ Let $x(t)$ be a solution to~\eqref{eq:qAlgo} such that
$x(0)=x^0\in\overline\D.$ If $x^0\in \interior\D$, then
$L\qd(x^0)=\bf 0$, and thus $\K L \qd(x^0)=\{\bf 0\}.$ Hence
$\interior\D$ is invariant.
Let then $x^0\in \partial\D$. Then there exist
$k^0\in \integers$ and $\V ^-, \V^+\subseteq \V$ (not both empty) such that $x_i(0)= (k-1/2)\Delta$ for all $i\in \V ^-$, $x_i(0)= (k^0+1/2)\Delta$ for all $i\in \V ^+$, and $x_i(0)\in ((k^0-1/2)\Delta, (k^0+1/2)\Delta)$ for all $i \in
\V\setminus ( \V ^-\cup \V^+)$.
Let us assume by contradiction that there exists $T>0$ such that
$x(T)\not \in \overline\D$, i.e. either there exists
$\overline i\in \V$
such that  $x_{\overline i}(T)>(k^0+1/2)\Delta$
or there exists
$\overline j\in \V$ such that $x_{\overline j}(T)<(k^0-1/2)\Delta$.
For brevity, we examine only the former case.
Let $i^*$ be such that $x_{i^*}(T)=\max \{ x_i(T), i\in \V\}$.
Without loss of generality, we can
assume that $(k^0+3/2)\Delta >x_{i^*}(T)> (k^0+1/2)\Delta$
and $x_i(T)< (k^0+3/2)\Delta$ for all $i\in \V$.
Thanks to the continuity of $x(t)$, there exists $T'<T$ such that
$x_{i^* }(T')= (k^0+1/2)\Delta$ and
$(k^0+1/2)\Delta<x_{ i ^*}(t)<(k^0+3/2)\Delta$ for all $t\in
(T',T)$.
 Since $x_{i^*}(T)-x_{i^*}(T')>0$ there exists a subset $\mathcal{T}$ of
$(T',T)$ such that $\mathcal{T}$ has positive Lebesgue measure and
for all $t\in \mathcal{T}$ the derivative $\dot x_{i ^*}(t)$
exists and is positive.
On the other hand for all $t\in (T',T)$
one has $(k^0+1/2)\Delta<x_{i^*}(t)<(k^0+3/2)\Delta$, which
implies $\qd (x_{ i^*}(t))=(k^0+1)\Delta$ for all  $t\in (T',T)$. Let
us now consider any $v\in \K(\qd (x(t))$ with $t\in (T',T)$.
It holds $v_{ i^*}=(k^0+1)\Delta$ and $v_j< (k^0+1)\Delta$ for all $j\not ={ i^*}$, and therefore $(-Lv)_{ i^*}< \indeg{i^*}(k^0+1)\Delta-\indeg{i^*}(k^0+1)\Delta=0$. From this fact it follows that $\dot x_{ i ^*}(t)< 0$, i.e.\ a contradiction.
\end{proof}

%%%%%%%%%%%%%%%%%%%%%%%%%%%%%%%%%%%%%%%%%%%%%%%%%%%%%%%%%%%%%%%%%%%%%%%%%%%%%%%%%%%%%%%%%%%%%%%%%%
\bigskip
Next, we provide a second convergence result, stating that on any connected {\em symmetric} graph the quantized dynamics converges to the set of equilibria. This fact implies that the error induced by quantization does not depend on the network properties, but only on the quantizer.
% This result is stronger than Theorem~\ref{theor:ConvergenceStrip}, in the sense that convergence is proved , but it does not give an estimate of the convergence time. Moreover it only holds for symmetric weighted graphs.
%
\begin{theorem}[Convergence to equilibria]\label{theor:convergenceD}
If the weighted graph $\G$ is symmetric and $x(t)$ is any
Krasowskii solution to~\eqref{eq:qAlgo}, then ${\dist}(x(t),\D)\to 0$ as $t\to\infty$.
%there exists $T$ such that $x(T)\in \overline\D$.
\end{theorem}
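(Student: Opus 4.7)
The plan is to apply a LaSalle-type invariance principle for differential inclusions (e.g.\ \cite{AB-FC:99}) with the same Lyapunov function $V(x)=\tfrac12\|x-\xave(0)\1\|^2$ used in Theorem~\ref{theor:ConvergenceStrip}, now exploiting the symmetry $L=L^*$. Average preservation (Lemma~\ref{lem:AveragePreserved}) makes $V$ well defined along trajectories.

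First, I recompute $\dot V$ along any Krasowskii solution: for every $v\in\K\qd(x)$, using the first equality in \eqref{Kcartesianproduct} together with $\1^*L=\mathbf{0}^*$,
\[
\nabla V(x)\cdot(-Lv)=-x^*Lv=-\tfrac12\sum_{(i,j)\in E} A_{ij}(x_i-x_j)(v_i-v_j).
\]
The crucial observation is that $\qd$ is nondecreasing and $\K\qd$ preserves this monotonicity in a set-valued sense: the only multi-valued points are the intervals $[k\Delta,(k+1)\Delta]$ attained at $x=(k+\tfrac12)\Delta$, and consecutive such intervals share only endpoints, so $x_i\ge x_j$ implies $v_i\ge v_j$ for every $v_i\in\K\qd(x_i)$ and $v_j\in\K\qd(x_j)$. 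Hence each summand in the display is nonnegative, yielding $\dot V\le 0$ along every Krasowskii solution.

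Applying the invariance principle, every Krasowskii solution approaches the largest weakly invariant subset $S$ of
\[
Z=\bigsetdef{x\in\reals^N}{\exists\,v\in\K\qd(x):\ (x_i-x_j)(v_i-v_j)=0\ \forall (i,j)\in E}.
\]
At any $x$ in the interior of a quantization cell, $\K\qd(x)$ is the singleton $\{\qd(x)\}$ and the $Z$ condition forces $\qd(x_i)=\qd(x_j)$ whenever $x_i\ne x_j$; since $x_i=x_j\Rightarrow\qd(x_i)=\qd(x_j)$ trivially, $\qd$ must be edge-constant, and connectedness of the symmetric graph $\G$ yields $x\in\D$. Thus the interior portion of $S$ is already contained in $\overline\D$.

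The main obstacle is the sliding-mode case, since $Z\setminus\overline\D$ is non-empty on discontinuity hyperplanes: the set-valuedness of $\K\qd$ admits non-trivial selections equalizing $v_i$ and $v_j$ across edges where $x_i\ne x_j$. I would handle this by showing that no Krasowskii solution can remain in $Z$ on a positive-length interval starting from such a point: the $Z$ constraints on adjacent edges force the admissible selections to take values that become infeasible as soon as any affected component leaves its hyperplane, and a case analysis of the admissible Krasowskii directions shows that at least one such component must indeed leave in arbitrarily short time. Combined with the strong invariance of $\overline\D$ from Proposition~\ref{lemma:StrongInvariance}, this identifies $S=\overline\D$, and the theorem then follows from the elementary identity $\dist(x(t),\D)=\dist(x(t),\overline\D)$.
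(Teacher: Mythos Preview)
Your setup---the quadratic Lyapunov function, the edge-by-edge nonnegativity via monotonicity of $\K\qd$, and the LaSalle invariance principle for differential inclusions---coincides with the paper's. The divergence is in how the zero-derivative set $Z$ is handled.

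The paper does \emph{not} carry out an invariance analysis on $Z$. Instead it argues directly that $Z\subseteq\overline\D$: given $\bar x\notin\overline\D$, it picks two indices (relabelled $1$ and $2$) with $\bar x_2$ lying strictly outside the closed cell of $\bar x_1$, and then, through a four-case analysis according to whether $\bar x_1$ and $\bar x_2$ sit on discontinuity hyperplanes, shows that $v_1\ne v_2$ for \emph{every} $v\in\K\qd(\bar x)$. One strictly positive edge-term then forces $\nabla V(\bar x)\cdot(-Lv)<0$ for all $v$, so $\bar x\notin Z$, and LaSalle finishes immediately.

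Your assertion that $Z\setminus\overline\D$ is nonempty is in fact correct and reveals a lacuna in the paper's argument: the two indices produced there are never shown to be \emph{adjacent}. On the path $1$--$2$--$3$--$4$ with $\bar x=(0.3,\,0.5,\,0.5,\,0.7)\Delta$, the selection $v=(0,0,\Delta,\Delta)\in\K\qd(\bar x)$ annihilates every edge-term, so $\bar x\in Z\setminus\overline\D$; the only pair with forcibly distinct $v$-values is $(1,4)$, which is not an edge. Your instinct to pass to a genuine invariance argument is therefore well founded.

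That said, the invariance step is the heart of the matter, and in your proposal it is only a sketch. Writing that ``a case analysis of the admissible Krasowskii directions shows that at least one such component must indeed leave in arbitrarily short time'' is not a proof: you must actually rule out Krasowskii solutions that slide within $\overline Z\setminus\overline\D$, and this requires analyzing the coupled constraints across \emph{all} boundary components and \emph{all} incident edges simultaneously (in the example above, the sliding selection keeping $x_2,x_3$ on the hyperplane is $v=(0,\Delta/3,2\Delta/3,\Delta)$, which already gives $\dot V<0$---but you have to show this kind of incompatibility always occurs). Until that analysis is carried out, your proposal has a genuine gap at exactly the point where it departs from the paper.
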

\begin{proof}
We consider  the function $V(x)=\frac{1}{2}x^*x$ and we prove that
$${\nabla V(x)\cdot (-Lv)\leq 0}$$ for every $x$ and for every
$v\in   \K\qd(x)$. Let us first remark that $\K\qd(x)\subseteq \times_i  \K(\qd(x_{i})$ and then $v\in
\K\qd(x)$ implies that $v\in \times _i  \K(\qd(x_{i}))$.
\begin{align*}
\nabla V(x)\cdot (-Lv)=&-x^*Lv\\
=&\sum _i x_i\sum _j A_{ij}(v_j-v_i)\\
=&-\frac{1}{2}\sum _{ij} (x_j-x_i)A_{ij}(v_j-v_i)\leq 0,
\end{align*}
where in the third equality we have used the symmetry of the
graph, i.e. of the matrix $A$, and where  the last inequality is a
consequence of the fact that if $a,b\in \reals, a\leq b, $ then
for all $u\in  \K\qd(a)$ and all $w\in  \K\qd(b)$ it
holds $u\leq w$. Namely, all terms of the above summation are
nonnegative.
Let $Z=\{ x\in \reals ^N:\   \exists v\in   \K\qd(x) \text{
such that }   \nabla V(x)\cdot (-Lv)=0 \} $ and let $M$ be the
largest weakly invariant subset of $\overline{Z}$. By LaSalle
invariance principle for differential inclusions (see
\cite{AB-FC:99}, Theorem~3) we have that  any solution $x(t)$ to
$\dot x\in -L(\K(\qd(x))$ is such that
 ${\dist}(x(t),M)\to 0$ as $t\to +\infty$.

We now prove that $Z\subseteq \overline\D$. More precisely
we prove that if $\bar x \not \in \overline\D$ then $\bar x
\not \in Z$. Proving  $\bar x \not \in Z$ is equivalent to prove
that  for all $v\in \K\qd(\bar x)$ one has
$\nabla V(\bar x)\cdot (-Lv)<0$, i.e. $\sum _{ij}
(\bar x_j-\bar x_i)A_{ij}(v_j-v_i)>0$,
and, for this to hold, it is sufficient to prove that there exist
$i,j$ such that $A_{ij}\not =0$ and $(\bar x_j-\bar x_i)(v_j-v_i)\not =0$.
Let us then assume that $\bar x\notin \overline\D$.

This means that for all $k\in\integers$ there exists ${\overline i}\in \V$ such that
$\overline x_{\overline i}<(k-1/2)\Delta $ or  there exists ${\overline j}\in \V$ such that
$\overline x_{\overline j}>(k+1/2)\Delta $. Let
$k_1=\qd (\overline x_1)$. Then there exists $i\in\V$ (without loss generality we assume $i=2$) such that $\overline x_2<(k_1-1/2)\Delta$ or $\overline x_2>(k_1+1/2)\Delta$. We consider the case
$\overline x_2<(k_1-1/2)\Delta$. Let $k_2$ be such that $\qd (\overline x_2)=k_2\Delta$. Clearly $k_2<k_1$.
We examine the following possible cases:
\begin{itemize}
\item[(a)] $\bar x_1\not =(k_1-1/2)\Delta$ and $\bar x_2\not =(k_2-1/2)\Delta;$
\item[(b)] $\bar x_1 =(k_1-1/2)\Delta$ and $\bar x_2\not =(k_2-1/2)\Delta;$
\item[(c)] $\bar x_1\not =(k_1-1/2)\Delta$ and $\bar x_2 =(k_2-1/2)\Delta;$
\item[(d)] $\bar x_1 =(k_1-1/2)\Delta$ and $\bar x_2 =(k_2-1/2)\Delta.$
\end{itemize}
In case (a) for all $v\in \K\qd(\bar x)$ one has
$v_1=k_1\Delta$ and $v_2=k_2\Delta$ then $v_1-v_2=\Delta
(k_1-k_2)\not =0$ since $k_1\not =k_2$. In case (b) we remark
that, since  $\bar x \not \in \overline\D$, then $k_2\leq
k_1-2$. For all $v\in \K\qd(\bar x)$, one has $v_1=\alpha k_1\Delta +(1-\alpha )(k_1-1)\Delta= \Delta
(k_1-1+\alpha )$ with $\alpha \in [0,1]$, and $v_2=k_2\Delta$.
Then $v_1-v_2= \Delta(k_1-k_2-1+\alpha )\geq \Delta\not =0$.
Analogously in case (c) we get $v_1=\Delta k_1$, $v_2=\alpha (k_2-1)\Delta+(1-\alpha)k_2\Delta$, then $v_1-v_2=\Delta (k_1-k_2+\alpha)$
with $\alpha \in [0,1]$, then $v_1-v_2\not =0$.
Finally in case (d) we also have $k_2\leq k_1-2$, and moreover,
since $\K \qd (\bar x)\subseteq \times _i\K(\qd (x_{i}))$,
for all $v\in \K \qd (\bar x)$ we have
\begin{align*}
v_1=&\Delta [\alpha _1(k_1-1)+(1-\alpha _1)k_1] \\
v_2=&\Delta [\alpha _2(k_2-1)+(1-\alpha _2)k_2],
\end{align*}
with $\alpha _i\in [0,1]$, $i=1,2$. Then
we get $v_1-v_2=\Delta[k_1-k_2-\alpha _1+\alpha_2]\not =0$.
Hence we have proved that $Z\subseteq \overline\D$. This
fact also implies that $\overline Z\subseteq \overline\D$,
and finally,
${\dist}(x(t),\overline\D)\to 0$ as $t\to+\infty$.
\end{proof}

\smallskip

We remark that Theorem~\ref{theor:convergenceD} does not imply that solutions converge to a point in $\D$. However, Lemma~\ref{lem:AveragePreserved} implies that solutions whose initial conditions belong to the hyperplane
$\sum_{i=1}^{N}x_i=N (k+\frac 12)\Delta$ for some $ k\in \integers$  converge to the
point $(k+\frac 12)\Delta\1 $.

One may ask whether solutions to~\eqref{eq:qAlgo} reach $\overline\D$ in finite time. This claim is supported by
numerical simulations, which we will illustrate in
Section~\ref{sec:Simulations}. Moreover, the following result
shows that the claim is true for almost any initial condition.

\begin{proposition}[Finite-time convergence to equilibria]\label{prop:FiniteTimeConv}
If the weighted graph $\G$ is symmetric and $x(t)$ is any
Krasowskii solution to~\eqref{eq:qAlgo} such that $\xave(0)\not
=(k+1/2)\Delta$ for every $k\in \integers$, then there exists $T$ such
that $x(T)\in \overline\D$.
\end{proposition}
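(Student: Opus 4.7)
Let $\bar k\in\integers$ be the unique integer satisfying $\xave(0)\in((\bar k-\tfrac12)\Delta,(\bar k+\tfrac12)\Delta)$, and set
\[
H_{\bar k}:=[(\bar k-\tfrac12)\Delta,(\bar k+\tfrac12)\Delta]^{N}\subseteq\overline\D.
\]
The plan is to prove the stronger statement that $x(T)\in H_{\bar k}$ for some finite $T\ge0$. First I would combine the monotonicity of $m(t)=\min_i x_i(t)$ and $M(t)=\max_i x_i(t)$ (established in the proof of Lemma~\ref{lemma:Krsol}) with Theorem~\ref{theor:convergenceD} and average preservation (Lemma~\ref{lem:AveragePreserved}) to produce a time $t_1\ge 0$ such that, for every $t\ge t_1$, each coordinate satisfies $x_i(t)\in((\bar k-\tfrac12)\Delta-\epsilon,(\bar k+\tfrac12)\Delta+\epsilon)$ for some fixed $\epsilon<\Delta/2$. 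The identification of the correct cube $H_{\bar k}$ uses the fact that any point of $H_k$ has average in $[(k-\tfrac12)\Delta,(k+\tfrac12)\Delta]$, so only $k=\bar k$ is compatible with $\xave(0)$. This localization reduces the range of $\qd(x_i(t))$ to $\{(\bar k-1)\Delta,\bar k\Delta,(\bar k+1)\Delta\}$ for all $t\ge t_1$.

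The central tool is the $\ell^{1}$ distance to $H_{\bar k}$,
\[
S(x):=\sum_{i=1}^{N}\dist\!\bigl(x_i,\,[(\bar k-\tfrac12)\Delta,(\bar k+\tfrac12)\Delta]\bigr),
\]
which is locally Lipschitz in $x$, hence absolutely continuous along the trajectory, and vanishes exactly on $H_{\bar k}$. Partition $\V$ into the time-varying strict index sets
\[
I^+(t)=\{i:x_i(t)>(\bar k+\tfrac12)\Delta\},\quad
I^-(t)=\{i:x_i(t)<(\bar k-\tfrac12)\Delta\},\quad
I^0(t)=\V\setminus(I^+\cup I^-).
\]
At any $t\ge t_1$ for which no coordinate lies exactly on a threshold, the dynamics is classical, and a direct computation using the symmetry of $A$ yields
\[
\dot S(t)=-\Delta\bigl[A(I^+,I^0)+A(I^-,I^0)+4A(I^+,I^-)\bigr],
\]
where $A(X,Y):=\sum_{i\in X,\,j\in Y}A_{ij}$. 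When $x(t)\notin H_{\bar k}$ the set $I^+\cup I^-$ is nonempty, while the hypothesis $\xave(0)\in((\bar k-\tfrac12)\Delta,(\bar k+\tfrac12)\Delta)$ rules out $I^+=\V$ and $I^-=\V$; connectedness of the symmetric graph then guarantees that at least one of the three cuts above is nonempty, so the bracket is bounded below by $\gamma:=\min\{A_{ij}:A_{ij}>0\}>0$, giving $\dot S(t)\le-\gamma\Delta$.

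Integrating on any interval $[t_1,T]$ on which $x$ stays outside $H_{\bar k}$ yields $S(x(T))\le S(x(t_1))-\gamma\Delta(T-t_1)$. Since $S\ge0$, this forces $x(T)\in H_{\bar k}\subseteq\overline\D$ for some finite $T\le t_1+S(x(t_1))/(\gamma\Delta)$, which proves the proposition.

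The hard part will be rigorously handling the Krasowskii set-valued dynamics on the threshold hyperplanes $\{x_i=(\bar k\pm\tfrac12)\Delta\}$, along which sliding modes can persist for positive time. A sliding coordinate contributes $0$ to $S$ and is stationary, so it belongs to $I^0$ under the strict partition, but its Krasowskii-selected $\qd(x_i)$ can be any value in $[\bar k\Delta,(\bar k+1)\Delta]$ (or in $[(\bar k-1)\Delta,\bar k\Delta]$), which may alter the contribution of $i$ to the evolution of its neighbors. One must verify, either via a Clarke/Dini upper-derivative computation applied to the Lipschitz function $S$, or by a case-by-case examination of the feasible sliding selections, that the aggregate dissipation inequality $\dot S\le-\gamma\Delta$ still holds almost everywhere in the generalized sense, possibly with a smaller but still strictly positive $\gamma$.
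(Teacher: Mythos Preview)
Your approach is correct in outline but genuinely different from the paper's argument, so a comparison is in order.

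The paper does \emph{not} first invoke Theorem~\ref{theor:convergenceD} to localize near a specific cube. Instead it recycles the quadratic $V(x)=\tfrac12 x^*x$ from that theorem and the same derivative estimate to get, for every $v\in\K\qd(x)$ and as long as $x(t)\notin\overline\D$,
\[
\frac{d}{dt}V(x(t))\le -\tfrac12\,\underline A\,\Delta\,\max_{(i,j)\in\E}|x_i(t)-x_j(t)|.
\]
It then sets $\phi=\tfrac12\inf_t\{\max_{(i,j)\in\E}|x_i(t)-x_j(t)|:x(t)\notin\overline\D\}$ and argues a dichotomy: if $\phi>0$ the uniform decrease forces entry into $\overline\D$ in finite time; if $\phi=0$ a path--diameter estimate along a subsequence shows $x(t_n)\to\xave(0)\1$, and since $\overline{\R^N\setminus\overline\D}\cap\operatorname{span}\1=\{(k+\tfrac12)\Delta\1:k\in\Z\}$ this contradicts the hypothesis on $\xave(0)$. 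Thus the average hypothesis enters only at the end, by contradiction, and there is no need to single out a target cube.

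Your argument is more geometric and modular: you use Theorem~\ref{theor:convergenceD} as a black box to land in an $\epsilon$--enlargement of the single cube $H_{\bar k}$ (so only three quantization levels are active), and then drive the tailored $\ell^1$ Lyapunov function $S$ down at a uniform rate via the cut bound $\dot S\le -\gamma\Delta$. This buys you a slightly stronger conclusion (entry into the specific cube $H_{\bar k}$, not merely $\overline\D$) and an explicit time estimate $t_1+S(x(t_1))/(\gamma\Delta)$, at the cost of relying on the prior asymptotic result. The paper's argument is self--contained but less explicit, since the constant $\phi$ depends on the particular trajectory.

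On the ``hard part'' you flag: the paper sidesteps most of it because the Krasowskii case analysis (cases (a)--(d)) was already carried out in the proof of Theorem~\ref{theor:convergenceD}, and those estimates feed directly into the bound on $\dot V$ for \emph{every} $v\in\K\qd(x)$. In your setup you do need to check that sliding coordinates on $(\bar k\pm\tfrac12)\Delta$ cannot kill the dissipation. Your instinct is right: a coordinate sliding on, say, the upper threshold contributes $0$ to $\dot S$ itself, and its selection $v_i\in[\bar k\Delta,(\bar k{+}1)\Delta]$ can only weaken, not reverse, the pull it exerts on a neighbor in $I^+$. The configuration $\dot S=0$ would force $v_j=(\bar k{+}1)\Delta$ to propagate through the connected graph to every node, which is incompatible with $\xave(0)<(\bar k+\tfrac12)\Delta$ unless $I^+\cup I^{0+}=\V$, again contradicting the average hypothesis. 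So the gap can be closed, but it requires exactly the kind of case check you anticipate; the paper's route avoids having to redo it.
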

\begin{proof} Let us consider any Krasowskii solution $x(t)$ to~\eqref{eq:qAlgo} such that
$x(0)\not \in \overline\D$ and its composition with the
function $V(x)= \frac 12 x^*x$. For almost all $t\geq 0$ we have
that $\dot x(t)=-L v(t)$ with $v(t)\in  \K\qd(x(t))$ and
(see the proof of Theorem~\ref{theor:convergenceD})
as long as $x(t)\not\in\overline\D$, we also have that for every $t\ge0$,
\begin{align*}
 \frac{d}{dt}V(x(t)) =&\nabla V(x(t))\cdot \dot x(t)\\
=& -\frac{1}{2}\sum_{ij} (x_j(t)-x_i(t))A_{ij}(v_j(t)-v_i(t))\\
\le& -\frac12 \underline{A} \sum_{(i,j)\in\E} (x_j(t)-x_i(t))(v_j(t)-v_i(t))\\
=& -\frac12 \underline{A} \sum_{(i,j)\in\E} |x_j(t)-x_i(t)||v_j(t)-v_i(t)|\\
\le& -\frac12 \underline{A} \max_{(i,j)\in\E} |x_j(t)-x_i(t)||v_j(t)-v_i(t)|\\
\le& -\frac12 \underline{A} \Delta \max_{(i,j)\in\E} |x_j(t)-x_i(t)|\\
\le & - \phi\Delta \underline{A},
\end{align*}
where $\displaystyle\underline{A}=\min_{i,j\in\V}\setdef{A_{ij}}{A_{ij}\not= 0}$ and
$\displaystyle\phi=\frac12\inf_{t\geq 0}\{\max_{(i,j)\in \E}{|x_j(t)-x_i(t)|}:\   x(t) \in \reals ^N\setminus \overline\D\}.$
If $\phi >0$, by integrating over the interval $[0,T]$, we get
\[ V(x(T))-V(x(0))\leq -\phi \underline{A}\Delta T.\]
Assume by contradiction that $x(t)$ does not reach $\overline\D$ in finite time. Then by letting $T\to +\infty$  we get that
$\lim _{T\to +\infty}V(x(T))=-\infty$, which contradicts the fact
that $V$ is positive definite.

We now prove %that $\phi>0$ provided that there exists no $k\in \integers$ such that $\xave(0)=(k+1/2)\Delta$. More precisely we prove
that if $\phi=0$, then $\xave(0)=(k+1/2)\Delta$ for some $k\in \integers$. If
$\phi=0$, then for any $n\in\natural$ there exists $t_n$ such that $x(t_n)
\in \reals ^N\setminus \overline\D$ and, for all $(i,j)\in
\E$, it holds $|x_j(t_n)-x_i(t_n)|<\frac 1n$.
Let us fix any $k\in\V$, and consider a path of length $K$ connecting all the vertices of $\G$ and starting from $k$. Let us denote the states of the nodes in this path as $\tilde
x_1,...,\tilde x_K$, noting that some of the nodes may
appear in the list more than once. From the fact that $\phi =0$ we
deduce that
$$\tilde x_1(t_n)-\frac 1n<\tilde x_2(t_n)<\tilde x_1(t_n)+\frac 1n,$$
$$\tilde x_1(t_n)-\frac 2n<\tilde x_2(t_n)-\frac 1n<\tilde x_3(t_n)<\tilde x_2(t_n)+\frac 1n<\tilde  x_1(t_n)+\frac 2n,$$
$${\rm etc.}$$
$$\tilde x_1(t_n)-\frac Kn<\tilde x_K(t_n)<\tilde x_1(t_n)+\frac Kn.$$
By summing $N$ of these inequalities corresponding to the $N$
different nodes we get, since by definition $\tilde x_1=x_k$,
that
$$x_k(t_n)-\frac 1n \sum_{i=1}^{K}i<\frac 1N \sum_{i=1}^{N}x_i(t_n)<x_k(t_n)+\frac 1n \sum_{i=1}^{K}i.$$
Since $\frac 1N \sum_{i=1}^{N}x_i(t_n)=\frac 1N
\sum_{i=1}^{N}x_i(0)$ we obtain
$$\left|\frac 1N \sum_{i=1}^{N}x_i(0)-x_k(t_n)\right|<\frac{K(K+1)}{2 n}$$
and then $x_k(t_n)\to \frac 1N \sum_{i=1}^{N}x_i(0)$ as
$n\to+\infty$. By the arbitrariness of $k$, we get $x(t_n)\to \xave (0)\1 $
as $n\to+\infty$.
We recall that  $x(t_n) \in \reals ^N\setminus \overline\D$
for every $n$, then $\xave (0)\1 \in \overline{\reals ^N\setminus
\overline\D}$. Finally since $\overline{\reals ^N\setminus
\overline\D}\,\cap\, {\rm span} \1=\setdef{(k+\frac
12)\Delta}{k\in \integers}$ we get that $\xave (0)=(k+\frac
12)\Delta$ for some $k\in\integers$.
\end{proof}

\begin{remark}[Comparison with discrete-time consensus]
The results of ``practical consensus'' in Theorem~\ref{theor:ConvergenceStrip} and~\ref{theor:convergenceD} can be related with results about discrete-time consensus systems, as those in~\cite{PF-RC-FF-SZ:08} and~\cite{AK-TB-RS:07}. In~\cite{PF-RC-FF-SZ:08}, the authors consider
$x(t+1)=x(t)-\frac1{\dmax+\eps}L\qd(x(t))$, where $\dmax$ is the
largest in-degree in $\G$, and $\eps>0$.
Clearly $\D$ is the set of equilibria for this system, but its
limit behavior shows limit cycles which are not contained in the
closure of $\D$. However, for some example topologies (rings and complete graphs) the system can be proved to approach average consensus up to the quantizer precision.
The paper~\cite{AK-TB-RS:07} considers a discrete-state dynamics in which the agents communicate in randomly chosen pairs, and proves convergence to a set of the form
$\setdef{x\in \integer^N}{ x_i \in\{L,L+1\}, L \in \integer}.$
\end{remark}

%%%%%%%%%%%%%%%%%%%%%%%%%%%%%%%%%%%%%%%%%%%%%%%%%%%%%%%%%%%%%%%%%%%%%%%%%%%%%%%%%%%%%%%%%%%%%%%%%%%%%%%%%%%%%%%%%%%%%%%
%%%%%%%%%%%%%%%%%%%%%%%%%%%%%%%%%%%%%%%%%%%%%%%%%%%%%%%%%%%%%%%%%%%%%%%%%%%%%%%%%%%%%%%%%%%%%%%%%%%%%%%%%%%%%%%%%%%%%%%
%%%%%%%%%%%%%%%%%%%%%%%%%%%%%%%%%%%%%%%%%%%%%%%%%%%%%%%%%%%%%%%%%%%%%%%%%%%%%%%%%%%%%%%%%%%%%%%%%%%%%%%%%%%%%%%%%%%%%%%

%QUI COMINCIA LA PARTE SUI QUANTIZZATORI CON ISTERESI.

\section{Chattering-free quantized dynamics}\label{sec:HysteresisQuant}
The analysis in Remark~\ref{rem:sliding-mode} has pointed out the
existence of sliding modes in the system $\dot x=-L {\tt q}(x)$, a
phenomenon which is not acceptable  in practical implementation.
In this section we discuss a different quantization scheme to
overcome this difficulty and we analyze the resulting system. %%

\subsection{Hysteretic quantizer: hybrid model}

The different quantization scheme is based on a quantizer with
hysteresis which we introduce below. Let us consider the
multi-valued map $\qh$
 %:\R\to \Delta \Z\cup (\Delta
%\Z+\frac{\Delta}{2})$
defined as (Figure~\ref{fig:quantizer.hystersis})
\be\label{q.h} \qh(r)= \left\{ \ba{lrl}
j\Delta & -\frac{\Delta}{2}+j\Delta \le r < \frac{\Delta}{2}+j\Delta\\
j\Delta+\frac{\Delta}{2} &  j\Delta \le r < \Delta+j\Delta, & j\in
\Z \ea \right. \ee
for the scalar $r$, with the understanding that if $x\in\real^n$, $\qh(x)=(\qh(x_1),\ldots,\qh(x_n))^*$
\begin{figure}[htb]
\begin{center}
\includegraphics[width=.49\textwidth]{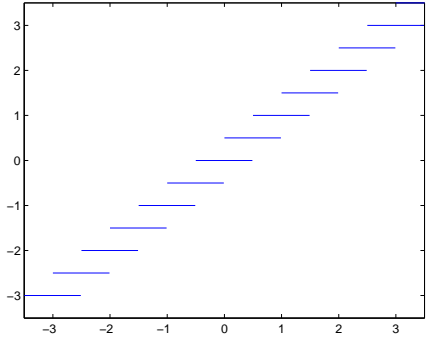}
\caption{The uniform quantizer with hysteresis $\qh(z)$, when $\Delta=1$.}\label{fig:quantizer.hystersis}
\end{center}
\end{figure}
The evolution of $\qh(x(t))$ as a function of $x(t)$ can be
described as follows. At time $t=0$, $\qh(x(0))= \qd (x(0))
$. Let, for the sake of notational simplicity, $\qh:=\qh(x(t))$, and $\qh^+:=\lim_{s\to t^+}\qh(x(s))$,
$x:=x(t)$.
If $\qh \in (\Delta \Z\cup (\Delta \Z+\Delta/2))^N$,
then
\be\label{switch.rule} \qh^+= \left\{\ba{cc}
\qh  +\frac{\Delta}{2} & %\qh=\bar q \wedge \qquad
\text{ if } x \ge \qh  +\frac{\Delta}{2}\\[2mm]
\qh  -\frac{\Delta}{2} & %\qh=\bar q \wedge \qquad
\text{ if } x\le \qh  -\frac{\Delta}{2}\\[2mm]
\qh  & %\qh=\bar q \wedge \qquad
\text{ if } \qh  -\frac{\Delta}{2}< x< \qh  +\frac{\Delta}{2}. \ea\right.
\ee

%%%%%%
The rule (\ref{switch.rule}) is illustrated in Figure \ref{fig.switch}.
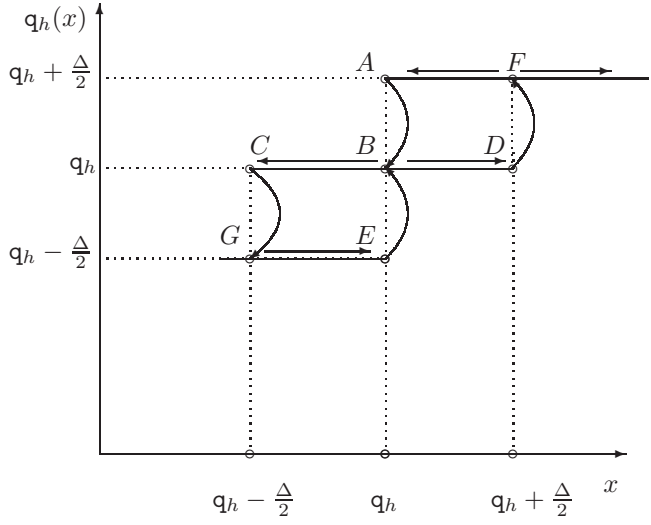
\begin{figure}
\setlength{\unitlength}{1mm}
\begin{center}
\scalebox{1}{
\begin{picture}(80,70)(0,0)
% Label of the horizontal axis
\put(77,5){$x$}
% horizontal axis
\put(10,10){\vector(1,0){70}}
% vertical axis
\put(10,10){\vector(0,1){60}}
% label of the vertical axis
\put(0,67){${\tt q}_h(x)$}
%
% Point A in the graph
%
\put(48,60){\circle{1}} \put(44,61){$A$}
%
%
% Point F in the graph
%
\put(65,60){\circle{1}} \put(64,61){$F$}
%
% Point B in the graph
%
\put(48,48){\circle{1}} \put(44,50){$B$}
%
% Point C in the graph
%
\put(30,50){$C$}
%
% Point D in the graph
%
\put(65,48){\circle{1}}
\put(61,50){$D$}
%
% Dotted vertical line connecting D and the x-axis
\multiput(65,50)(0,-1){40}{{\rule{.4pt}{.4pt}}}
\put(65,10){\circle{1}}
\put(62,3){${\tt q}_h+\frac{\Delta}{2}$}
%
%
% Point E in the graph
%
\put(44,38){$E$}
\put(48,36){\circle{1}}
\put(48,10){\circle{1}}
%
% Point G in the graph
%
\put(26,38){$G$}
\put(48,36){\circle{1}}
\put(48,10){\circle{1}}
%\emline(10,10)(80,60)
%\multiput(10,10)(.049,.03373819163){1482}{\line(1,0){.04723346829}}
%\end
%\emline(10,10)(64,80)
%\multiput(10,10)(.03372891943,.0445){1300}{\line(0,1){.04372267333}}
%\end
%
% Horizontal segment starting from point A
%
\put(48,60){\line(1,0){35}}
\multiput(48,60)(-1,0){38}{{\rule{.4pt}{.4pt}}}
\put(-2,60){${\tt q}_h+\frac{\Delta}{2}$}
%
%Horizontal segment starting from point E
%
\put(48,36){\line(-1,0){22}}
\multiput(48,36)(-1,0){38}{{\rule{.4pt}{.4pt}}}
\put(-2,36){${\tt q}_h-\frac{\Delta}{2}$}
%
%%\dottedline(48,60)(48,36)
%
% Dotted vertical line connecting A and E
%
\multiput(48,60)(0,-1){50}{{\rule{.4pt}{.4pt}}}
\put(46,3){${\tt q}_h$}
%
% Another dotted vertical line
%
\multiput(30,36)(0,-1){27}{{\rule{.4pt}{.4pt}}}
\put(30,36){\circle{1}}
\put(25,3){${\tt q}_h-\frac{\Delta}{2}$}
\put(30,10){\circle{1}}
%
%
% Segment connecting C and D
\put(65,48){\line(-1,0){35}}
\multiput(30,48)(-1,0){20}{{\rule{.4pt}{.4pt}}}
\put(6,48){${\tt q}_h$}
\put(30,48){\circle{1}}
%
% Curved arrow from A to B
%
\put(48,48){\vector(-1,-1){.07}}\qbezier(48,60)(54,54)(48,48)
%
% Curved arrow from E to B
%
\put(48,48.25){\vector(-1,1){.07}}\qbezier(48,36)(54,41.875)(48,48.25)
%
%
% Arrow from B to D
%
\put(51,49){\vector(1,0){13}}
%
%
% Arrow from B to C
%
\put(47,49){\vector(-1,0){16}}
\multiput(64.8,47.93)(0,.92308){14}{{\rule{.4pt}{.4pt}}}
%\end
%\qbezvec(63,48)(67,54)(63,60)
%
% Curved line starting from D pointing upwards
%
\put(64.8,60){\vector(-2,3){.07}}\qbezier(64.8,48)(70.8,54)(64.8,60)
%
% Horizontal arrow starting from A
%
\put(63,61){\vector(-1,0){12}} \put(66,61){\vector(1,0){12}}
%
% Curved arrow from C pointing downwards
\multiput(30,36)(0,1){12}{{\rule{.4pt}{.4pt}}}
%\qbezvec(39,48)(47,42)(39,36)
\put(30,36){\vector(-4,-3){.07}}
\qbezier(30,48)(38,42)(30,36)
%\end
\put(32,37){\vector(1,0){14}}
%\put(38,37){\vector(-1,0){5}}
\end{picture}
}
\caption{\label{fig.switch} The figure illustrates the evolution of
${\tt q}_h(x)$ as a function of $x$ as described in
(\ref{switch.rule}). Suppose initially that $\qh
-\frac{\Delta}{2}< x< \qh +\frac{\Delta}{2}$ and that
$\qh(x)=\qh$. As $x$ evolves, the point $(x,\qh(x))$ lies on the
segment $\overline{CD}$, and it may hit the point $C$ or the point
$D$, thus triggering a discrete transition. If the former occurs,
the discrete value takes a new value, namely $\qh^+=\qh-\frac{\Delta}{2}$,
the point $(x,\qh(x))$ jumps to $G$ and can then move towards $E$
or in the opposite direction. On the other hand, if $(x,\qh(x))$
hits $D$, then the quantization level takes the new value
$\qh+\frac{\Delta}{2}$, i.e.\ $\qh^+=\qh+\frac{\Delta}{2}$. In the
graph, this corresponds to the transition from point $D$ to point
$F$. The state $x$ can then further increase or decrease.
}
\end{center}
\end{figure}

%%%%%%%
Suppose that in the dynamics~\eqref{eq:StateQuant}, each agent
quantizes the information using $\qh$ rather than ${\tt q}$.
This leads to a system which can be better described and analyzed
using the formalism of  hybrid systems. To this end, we adopt the
notations in~\cite{RG-RS-AT:09}.
Let $q\in (\Delta \Z\cup (\Delta \Z+\Delta/2))^N$ be the discrete
state, $x\in \R ^N$ the continuous state and $X=\R^N \times
(\Delta \Z\cup (\Delta \Z+\Delta/2))^N$ the state space where the
system evolves. The continuous dynamics of the system are
described by
\be\label{continuous} \ba{rcl}
\dot x &=& -L q\\
\dot q &=& 0 \ea\ee
which are valid as far as the state $(x,q)$  belongs to the subset
of the state space:
\[
C=\{ (x,q)\in X:\forall\, i\in {\V}, -\frac{\Delta}{2}+q_i <
x_i < \frac{\Delta}{2}+q_i\}.
\]
If on the other hand $(x,q)$ belongs to the set
\[
D=\{ (x,q)\in X: \exists\, i\in {\V}, \;\text{such that}\; x_i\le
-\frac{\Delta}{2}+q_i\;\text{or}\; x_i\ge  \frac{\Delta}{2}+q_i\},
\]
then the following discrete update occurs: \be\label{discrete}
\ba{rcll}
x^+ &=& x\\
q_i^+ &=& \left\{\ba{ll}
q_i+\frac{\Delta}{2} &{\rm if\,}   x_i\ge \frac{\Delta}{2}+q_i\\[2mm]
q_i-\frac{\Delta}{2} & {\rm if\,}  x_i\le - \frac{\Delta}{2}+q_i\\[2mm]
q_i & \  {\rm otherwise}. \ea\right. \ea\ee
Observe that $C\cup D= X$ and $C\cap D= \emptyset$.
The equations~\eqref{continuous} and~\eqref{discrete} together
with the sets $C, D$ define the hybrid model associated with the
multi-agent system in the presence of the quantizers (\ref{q.h}).
In what follows we let $z=(x^\ast\;q^\ast)^\ast$ be the entire
state of the hybrid model and $f(z)$, $g(z)$ the maps on the
righthand side of~\eqref{continuous} and, respectively,~(\ref{discrete}). Hence, the system is concisely described as
\be\label{hs} \ba{rcll}
\dot z&=& f(z) & z\in C\\
z^+&=& g(z) & z\in D\;,\ea \ee
with an initial condition belonging to
\[
X_0= \{ (x,q)\in X: q_i-\frac{\Delta}{2}\le x_i <
q_i+\frac{\Delta}{2},\; \forall i\in {\V}\}.
\]

Note that initial conditions of the form $\left(x_0,\qd(x_0)\right),$ with $x_0\in\reals^N$, belong to this set.
%such that $\qh(x(0))= \qd (x(0)) $ belong to this set.

We recall from \cite{RG-RS-AT:09} the notion of {\em hybrid time
domain} and {\em solution} for a hybrid system. A {\em hybrid time domain} is a subset of $\R_{\ge 0}\times \N$ which is the union of infinitely many intervals of the form $[t_j, t_{j+1}]\times
\{j\}$, where $0=t_0\le t_1\le t_2\le \ldots$, or of finitely many
such intervals with the last one possibly of the form $[t_j,
t_{j+1}]\times \{j\}$, $[t_j, t_{j+1})\times \{j\}$, or $[t_j,
+\infty)\times \{j\}$. Let $z(t,j)$ be a function defined on a
hybrid time domain ${\rm dom} z$ such that for each fixed $j$,
$t\mapsto z(t,j)$ is a locally absolutely continuous function on the
interval $I_j=\{t:(t,j)\in {\rm dom} z \}$. The function $z(t,j)$ is a {\em solution}
to the hybrid system (\ref{hs}) if $z(0,0)\in C\cup D$ and the
following conditions are satisfied:
\begin{itemize}
\item For each $j$ such that $I_j$ has non-empty interior,
\[
\ba{rcll}
\dot z(t,j) &=&  f(z(t,j)) & \,{\rm for \, a.e.}\, t\in I_j\\
z(t,j) &\in & C & \, {\rm for\, all}\,  t\in [\min I_j, \sup I_j) \ea
\]

\item For each $(t,j)\in {\rm dom} z$ such that $(t,j+1)\in {\rm dom}
z$,
\[
\ba{rcl}
z(t,j+1) &=&  g(z(t,j))\\
z(t,j) &\in & D. \ea
\]

\end{itemize}
The solution $z$ is {\em nontrivial} if ${\rm dom} z$ contains at
least another point different from $(0,0)$ and is {\em complete}
if ${\rm dom} z$ is unbounded.

\subsection{Hybrid model analysis}
In this subsection we detail the analysis of system~\eqref{hs}. After proving basic properties about existence,
uniqueness and completeness of solutions, we verify that
chattering can not occur. We also compute the equilibria of the
system, and present a convergence result.

\begin{lemma}[Basic properties of solutions]\label{lemma:Basic-hyst}
For each $z(0,0)\in X_0$, there exists a non-trivial solution
$z(t,j)$ to~(\ref{hs}), and such solution is {\em forward
unique} and {\em complete}.
Moreover, every interval $I_j=\{t:(t,j)\in {\rm dom} z \}$,
possibly with the exception of $I_0$,  has a non-void interior.
\end{lemma}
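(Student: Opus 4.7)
The plan is to build a solution inductively, alternating continuous flow on $C$ and discrete jumps on $D$, and then to check the four properties in the statement (non-trivial solution, forward uniqueness, completeness, and non-void interior of $I_j$ for $j\ge 1$).

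First I would treat existence and the structure of the trajectory. Given $z(0,0)=(x_0,q_0)\in X_0$, the inequalities $q_{0,i}-\Delta/2\le x_{0,i}<q_{0,i}+\Delta/2$ show that $z(0,0)\in C$ unless some $x_{0,i}=q_{0,i}-\Delta/2$, in which case $z(0,0)\in D$. In the first case the equation $\dot x=-Lq_0$ with $q_0$ fixed is a linear affine ODE admitting the unique solution $x(t)=x_0-tLq_0$; one lets $t_1=\sup\{t>0:x(s)\in(q_{0,i}-\Delta/2,q_{0,i}+\Delta/2)\;\forall s\in[0,t],\;\forall i\}$, which is either finite (in which case $(x(t_1),q_0)\in D$) or $+\infty$ (in which case the solution is already complete with $I_0=[0,+\infty)$ and no further jumps). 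In the second case a jump occurs at $(0,0)$ and $I_0=\{0\}$. In both cases the jump map $g$ is then applied and the construction is iterated.

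Forward uniqueness and non-triviality of $I_j$ for $j\ge 1$ both follow from a single observation about what happens right after any jump at time $t_j$. By the strict inequalities in the definition of $C$, during the preceding flow the continuous state stays in the open set $C$, so by continuity the only way to enter $D$ is through an equality $x_i(t_j)=q_i(t_j^-)\pm\Delta/2$ for each coordinate $i$ that triggers the jump. For such a coordinate the rule~\eqref{discrete} gives $q_i^+=q_i\pm\Delta/2=x_i(t_j)$, which places $x_i$ exactly at the centre of the new quantisation interval $(q_i^+-\Delta/2,q_i^+ +\Delta/2)$; coordinates that do not trigger have $q_i^+=q_i$ with $x_i$ still strictly inside $(q_i-\Delta/2,q_i+\Delta/2)$. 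Hence $z^+\in C$ (interior), so the next flow interval $I_{j+1}$ has positive length. Uniqueness is then immediate: the flow ODE is linear with fixed $q$, and the jump map is single-valued since for $\Delta>0$ the conditions $x_i\ge q_i+\Delta/2$ and $x_i\le q_i-\Delta/2$ cannot simultaneously hold.

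Completeness requires ruling out both finite-time blow-up of $x$ and Zeno accumulation of jump times, and is the main technical point. During any flow, $|q_i-x_i|<\Delta/2$ holds componentwise, whence $\|\dot x\|=\|Lq\|\le \|L\|(\|x\|+\tfrac{\sqrt{N}}{2}\Delta)$; a Gronwall-type estimate then yields an a priori bound on $\|x(t)\|$, and therefore on $\|\dot x\|_\infty\le \bar C$, on every finite horizon $[0,T^*]$. For the Zeno part one uses the centring observation of the previous paragraph: between two consecutive jumps of a fixed coordinate $i$ the variable $x_i$ must traverse at least $\Delta/2$, because just after the earlier jump it sits at the centre of its interval, and $q_i$ can only change when $i$ itself jumps. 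With $|\dot x_i|\le\bar C$, these two jumps of coordinate $i$ are separated by at least $\Delta/(2\bar C)$ units of continuous time; summing over the $N$ coordinates gives at most $N(1+2\bar C T^*/\Delta)$ jumps on $[0,T^*]$, which contradicts any finite accumulation of jump times. Hence the solution extends to all of $[0,+\infty)$. The most delicate ingredient is precisely this Zeno argument, which crucially relies on the re-centring property built into the hysteretic quantiser.
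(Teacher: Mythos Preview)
Your proof is correct, but it diverges from the paper's in the completeness argument, and in doing so you prove more than the lemma asks for.

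In the hybrid-systems sense adopted here, a solution is \emph{complete} when ${\rm dom}\,z$ is unbounded; a Zeno solution (infinitely many jumps accumulating at a finite time) is still complete, since ${\rm dom}\,z$ is then unbounded in the $j$ direction. So your statement that ``completeness requires ruling out\ldots\ Zeno accumulation of jump times'' is a slight misconception. The paper exploits this: its completeness proof is a two-line contradiction---if ${\rm dom}\,z$ were bounded in both $t$ and $j$, there would be a last flow interval of finite length with no further jump, but the affine ODE~\eqref{continuous} can always be continued. No velocity bound, no Gronwall, no dwell-time estimate is needed.

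What you actually establish is stronger: you show that the $t$-component of ${\rm dom}\,z$ is all of $[0,+\infty)$, i.e.\ that switching times are locally finite. That is exactly the content of the paper's Proposition~\ref{p.no.chattering} (``No chattering''), which the paper proves separately using the same re-centring idea you use, together with the uniform velocity bound from Lemma~\ref{l1}. Your route trades the paper's invariance-based bound (Lemma~\ref{l1}) for a horizon-dependent Gronwall bound; this is valid since $x$ is continuous across jumps and the differential inequality $\|\dot x\|\le \|L\|(\|x\|+\sqrt{N}\Delta/2)$ holds a.e., but it gives a $T^*$-dependent constant rather than a uniform one. For the present lemma either suffices; the paper's uniform bound is later reused in the data-rate estimate.

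The remaining parts---existence by inductive construction, forward uniqueness from the affine flow and single-valued jump map, and non-void interior of $I_j$ for $j\ge 1$ via the re-centring $q_i^+=x_i$---match the paper's argument essentially verbatim.
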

\begin{proof}
There are two cases: $z(0,0)\in D$ or $z(0,0)\in C$. If $z(0,0)\in D$, then
$z(0,1)\in C$. Indeed, let  $i\in {\V}$ be any
index such that $x_i(0,0)= \frac{\Delta}{2}+q_i(0,0)$. Then
$x_i(0,1)=x_i(0,0)$ and $q_i(0,1)=\frac{\Delta}{2}+q_i(0,0)$, from
which $x_i(0,1)=q_i(0,1)$. Similarly, for any index $k$ for which
$x_k(0,0)= -\frac{\Delta}{2}+q_k(0,0)$, we have $x_k(0,1)=x_k(0,0)$ and
$q_k(0,1)=-\frac{\Delta}{2}+q_k(0,0)$.
For all the remaining indices $\ell$, $q_\ell(0,0)-\frac{\Delta}{2}<
x_\ell(0,0) < q_\ell(0,0)+\frac{\Delta}{2}$ and then
$q_\ell(0,1)=q_\ell(0,0)$ and $x_\ell(0,1)=x_\ell(0,0)$.
Then $z(0,1)\in C$
and the discrete transition stops in one step.
%The sequence $z(0,1)$ for $k=0,1$ is uniquely determined via the map (\ref{discrete}).
%
Since the righthand side of (\ref{continuous}) is constant, the
solution $z(t,j)$ starting either from $z(0,1)$ (if $z(0,0)\in D$)
or from $z(0,0)$ (if $z(0,0)\in C$) exists for $t$
sufficiently close to $0$ and is unique.
As a matter of fact, one can compute explicitly the solution  $z(t,j)$ on the
interval $I_j$, with $j=0$  (if $z(0,0)\in C$) or $j=1$  (if $z(0,0)\in D$).
Define the auxiliary Cauchy problem $\dot \zeta(t)=-L q(0,j)$, $\zeta(0)=x(0,j)$,
where by a slight abuse of notation we are denoting the time variable in the auxiliary system
by the same symbol $t$ which appears in the hybrid time $(t,j)$.
The problem admits the unique solution $\zeta(t)$ which is defined for all $t \ge 0$.
Let $\bar t$ be the minimal time greater than zero for which there exists an index $k\in {\V}$
such that $\zeta_k(\bar t\,)= -\frac{\Delta}{2}+q_k(0,j)$ or
$\zeta_k(\bar t\,)= \frac{\Delta}{2}+q_k(0,j)$. Set $I_j=[0,\bar t\,]$ if a finite $\bar t$ exists,
otherwise set $I_j=[0,+\infty)$. Finally let $z(t,j)=(\zeta(t)^*\; q(0,j)^*)^*\;$ for all $t\in I_j$. Then,
by construction, $z(t,j)$ satisfies the condition (\ref{continuous}).
If $\sup I_j=+\infty$, we have concluded the argument. If instead $\sup I_j<+\infty$,
$z(t,j)$ satisfies the differential equation $\dot z(t,j) =  f(z(t,j))=-L q(0,j)$ for all $t\in I_j$
except at $t=\sup I_j$; but $z(\sup I_j, j)\in D$ and at time $t=\sup I_j$ a discrete transition occurs.
After this transition, the solution can be uniquely extended starting from $z(\sup I_j, j+1)\in C$,
repeating the argument which have been just discussed.
Then, $z(t,j)$ is forward unique because it results from the concatenation
of unique solutions to~(\ref{continuous}) and (\ref{discrete}).

Let us show that the solution is complete. By contradiction, ${\rm dom} z$ is
bounded, that is ${\rm dom} z$ is the union of finitely many
intervals of the form $[t_j, t_{j+1}]\times \{j\}$, with the last
interval either of the form $[t_j, t_{j+1}]\times \{j\}$ or
$[t_j,t_{j+1})\times \{j\}$ and $t_{j+1}<+\infty$. This yields a
contradiction, because in view of the form of the last interval,
no more discrete transitions take place and by (\ref{continuous})
the continuous evolution exists for all $t$.

Finally, the interval $I_j=\{t:(t,j)\in {\rm dom} z \}$, with $j$
such that $z(0,j)\in C$, has a non-void interior because the
solution to (\ref{hs}) from $z(0,j)$ flows continuously with
bounded velocity. If $\sup I_j=+\infty$, then $(I_j,j)$ is the
last interval  of ${\rm dom}\; z$ and the proof ends. Otherwise,
if $\sup I_j<+\infty$, then by definition $z(\sup I_j,j)\in D$ and
there is an index $i\in {\V}$ such that either
$x_i(\sup I_j,j)= \frac{\Delta}{2}+q_i(\sup I_j,j)$
or $x_i(\sup I_j,j)=-\frac{\Delta}{2}+q_i(\sup I_j,j)$. Then, as
in the first part of the proof, one proves that $z(\sup
I_j,j+1)\in C$ and again $I_{j+1}$ has a non-empty interior
because the system flows continuously with bounded velocity. The
thesis descends by induction.
\end{proof}

The following lemma proves boundedness of solutions, a property which is useful in order to prove that chattering does not occur.

\begin{lemma}[Bounded solutions]\label{l1}
For every initial condition $z(0,0)\in X_0$, the solution $z(t,j)$ satisfies
\[\ba{c}
-\frac{\Delta}{2}+\min\{q_1(0,0),\ldots,q_N(0,0)\}\le q_i(t,j)\le
\max\{q_1(0,0),\ldots,q_N(0,0)\}+\frac{\Delta}{2}\\[2mm]
-\Delta+\min\{q_1(0,0),\ldots,q_N(0,0)\}\le x_i(t,j)\le
\max\{q_1(0,0),\ldots,q_N(0,0)\}+\Delta \;, \ea\] for all $i\in
{\V}$ and for all $(t,j)\in {\rm dom} z$.
\end{lemma}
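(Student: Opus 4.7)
The plan is to analyze the hybrid evolution by showing first that the maximum and minimum of the components of $q$ are monotone (non-increasing and non-decreasing, respectively) in the discrete index $j$, and then to translate these bounds on $q$ into bounds on $x$ via the proximity $|x_i-q_i|\le\Delta/2$ forced by the closure of $C$. Two facts will be used repeatedly: along any trajectory, $(x(t,j),q(t,j))\in C$ for $t\in[\min I_j,\sup I_j)$, so that $|x_i(t,j)-q_i(t,j)|<\Delta/2$ componentwise, and by continuity of the flow this relaxes to $|x_i(t,j)-q_i(t,j)|\le\Delta/2$ at the closing point $t=\sup I_j$ where the jump occurs; moreover, all $q$-values live on the grid $\tfrac{\Delta}{2}\Z$, so any two of them differ by an integer multiple of $\Delta/2$.

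Next I would prove by induction on $j$ that $M_q(j):=\max_{i\in\V} q_i(0,j)$ is non-increasing. Fix a jump from $(\sup I_j,j)$ to $(\sup I_j,j+1)$ and split the analysis componentwise using~\eqref{discrete}. If $q_i(0,j)<M_q(j)$, the grid property gives $q_i(0,j)\le M_q(j)-\Delta/2$, so even an upward jump leaves $q_i^+\le q_i(0,j)+\Delta/2\le M_q(j)$. If instead $q_i(0,j)=M_q(j)$, the continuous dynamics on $I_j$ gives
\[
\dot x_i(t,j)=\sum_{k\in\neigh{i}}A_{ik}\bigl(q_k(0,j)-q_i(0,j)\bigr)\le 0,
\]
because $q_k(0,j)\le M_q(j)=q_i(0,j)$ for every $k$. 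Hence $x_i(\cdot,j)$ is non-increasing on $I_j$, starting from $x_i(0,j)<q_i(0,j)+\Delta/2$ (strict from $(x(0,j),q(0,j))\in C$, or from $z(0,0)\in X_0$ when $j=0$); consequently the upward jump condition $x_i\ge q_i+\Delta/2$ is never triggered for this component, so $q_i^+\in\{q_i(0,j),q_i(0,j)-\Delta/2\}$, and therefore $q_i^+\le M_q(j)$. Taking the maximum over $i$ gives $M_q(j+1)\le M_q(j)$, so by induction $M_q(j)\le\max_k q_k(0,0)$ along the whole hybrid trajectory; a symmetric argument yields $\min_{i\in\V} q_i(0,j)\ge\min_k q_k(0,0)$.

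Combining these bounds on $q$ with the proximity $|x_i(t,j)-q_i(t,j)|\le\Delta/2$ from the first step gives $x_i(t,j)\in[\min_k q_k(0,0)-\Delta/2,\,\max_k q_k(0,0)+\Delta/2]$, which is in fact strictly stronger than the $\pm\Delta$ bound asserted in the lemma and thus implies it (and similarly for $q$). The hard part will be the ``argmax'' case in the inductive step: one has to rule out an upward jump of the component $i$ realizing $q_i=M_q$, even though some \emph{other} component may simultaneously force the discrete transition. This is handled precisely by combining the monotonicity $\dot x_i\le 0$ with the strict inequality $x_i(0,j)<q_i(0,j)+\Delta/2$ inherited from membership in $C$ (or in $X_0$ at time zero).
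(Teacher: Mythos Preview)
Your approach mirrors the paper's: both track $\max_i q_i$ and $\min_i q_i$ along the hybrid trajectory, show they are essentially monotone across jumps, and then transfer to $x$ via $|x_i-q_i|\le\Delta/2$. Your argument for the maximum is correct and in fact yields the sharper bound $q_i(t,j)\le\max_k q_k(0,0)$ (without the extra $\Delta/2$), because $X_0$ imposes the \emph{strict} inequality $x_i(0,0)<q_i(0,0)+\Delta/2$, so no upward jump can be triggered at $(0,0)$ even if $z(0,0)\in D$.

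However, the ``symmetric argument'' for the minimum has a genuine gap at $j=0$. The set $X_0$ is asymmetric: it requires $q_i-\tfrac{\Delta}{2}\le x_i<q_i+\tfrac{\Delta}{2}$, so the lower inequality is \emph{not} strict. An initial condition with $x_i(0,0)=q_i(0,0)-\tfrac{\Delta}{2}$ for the index $i$ realizing the minimum of $q$ lies in $X_0\cap D$ and triggers an immediate downward jump $q_i^+=q_i(0,0)-\tfrac{\Delta}{2}$; hence $\min_k q_k(0,1)=\min_k q_k(0,0)-\tfrac{\Delta}{2}$. Your claimed bound $\min_i q_i(0,j)\ge\min_k q_k(0,0)$ is therefore false in general, and so is the strengthened lower bound you announce for $x$. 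The fix is exactly what the paper does: treat the possible first jump separately, allowing a one-time $\tfrac{\Delta}{2}$ decrease of the minimum; from $j\ge 1$ on, the state after a jump lies in $C$, which gives the strict lower inequality, and your monotonicity argument goes through verbatim. This recovers precisely the lemma's stated bounds (and explains why the $\pm\tfrac{\Delta}{2}$ slack on $q$, and hence $\pm\Delta$ on $x$, is really needed on the lower side).
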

\begin{proof}
Consider the function $U(z)=\max\{q_1,\ldots,q_N\}$ and compute it
along the solution to (\ref{hs}). During continuous evolutions,
i.e.\ if $z(t,j)\in C$, $U(z(t))$ remains constant since, by the
second equation in (\ref{continuous}), no component of $q(t,j)$
changes its value. If $z(t,j)\in D$ and $(t,j)\ne (0,0)$, then
$U(z(t,j+1))\le U(z(t,j))$. In fact, suppose this is not true.
Then, since $q(t,j)\in (\Delta \Z\cup (\Delta \Z+\Delta/2))^N$,
and at the transition each component $q_i$ either remains the same
or changes its value by $\pm \Delta/2$, i.e.\
$q_i(t,j+1)=q_i(t,j)$ or $q_i(t,j+1)=q_i(t,j)\pm \Delta/2$, the
only possibility for having $U(z(t,j+1))> U(z(t,j))$ is that one
of the agents, say $k$, for which its discrete state $q_k(t,j)$
equals $\max\{q_1(t,j),\ldots,q_N(t,j)\}$, increases its value of
$+ \Delta/2$ . But for this to occur, it must be true
that during the continuous evolution which preceded the transition,
the continuous state $x_k(t,j)$ has increased its value
until it reached the threshold
$\frac{\Delta}{2}+q_k(t,j)=x_k(t,j)$. This is a contradiction
because, since $q_k(t,j)=\max\{q_1(t,j),\ldots,q_N(t,j)\}$, in the
equation $\dot x_k(t,j)=\sum_{j=1}^N A_{ij}(q_j(t,j)-q_k(t,j))$
the righthand side is non-positive. Hence, we conclude that
$U(z(t,j))\le U(z(0,k))$ for all $(t,j)\in {\rm dom}z$ such that
$t\ge 0$ and $j\ge k$, where $k\in \{0,1\}$ is the smallest
integer such that $z(0,k)\in C$.
In case $k=0$, then $U(z(t,j))\le U(z(0,0))$. In case $k=1$, then
$U(z(0,1))\le U(z(0,0))+\Delta/2$, and $U(z(t,j))\le U(z(0,1))\le
U(z(0,0))+\Delta/2$. Similarly one can prove that
$W(z)=\min\{q_1,\ldots,q_N\}$ satisfies $W(z(t,j))\ge W(z(0,0))-\Delta/2$.\\
The conclusion on the continuous state follows easily since
$z(0,0) \in X_0$ implies that for each $(t,j)\in {\rm dom}z$, for
each $i\in {\V}$, $q_i(t,j)-\frac{\Delta}{2}\le x_i(t,j) \le
q_i(t,j)+\frac{\Delta}{2}$.
\end{proof}

\bigskip

The last part of the thesis of Lemma \ref{lemma:Basic-hyst} implies that  the set $\{t\in \R_{\ge
0}|z(t,j)\in D\}$ has a zero measure. Moreover, a finer analysis
can be performed. Let us consider the switching times of a
solution $z(t,j)$ to the hybrid system originating from $z(0,0)\in
X_0$, i.e.\ the times $t_j$ such that $(t_j,j)\in {\rm dom} z$
implies $(t_j,j+1)\in {\rm dom} z$.
%By a slight abuse of notation we indifferently refer
%to $t_j$ and $(t_j,j)$ as switching times.
The set of switching times is {\em locally finite} if, for any
compact subset of  $\R_{\ge 0}$, there is only a finite number of
switching times which belong to that compact subset.
\begin{proposition}[No chattering]\label{p.no.chattering}
For every initial condition $z(0,0)\in X_0$, the set of switching
times of the solution $z(t,j)$ is locally finite.
\end{proposition}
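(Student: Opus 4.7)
The plan is to bound, for any fixed $T>0$, the number of switches each agent can undergo on $[0,T]$, and then sum the bounds over all agents. First, Lemma~\ref{l1} gives uniform bounds on each component of $q(t,j)$ in terms of $z(0,0)$ alone. Consequently, during any continuous phase (where $z\in C$), the components of $\dot x = -Lq$ are uniformly bounded: there exists a constant $M>0$, depending only on $z(0,0)$ and on the Laplacian $L$, such that $|\dot x_i(t,j)|\le M$ for every $i\in\V$ and every $(t,j)\in\mathrm{dom}\,z$ with $z(t,j)\in C$.

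Next, I would examine the dynamics of a single agent $i$ between two of its own switches. If agent $i$ switches at a switching time $t_j$, then by the jump map \eqref{discrete}, either $x_i(t_j,j)=q_i(t_j,j)+\Delta/2$ or $x_i(t_j,j)=q_i(t_j,j)-\Delta/2$, and in either case the post-jump value is $q_i(t_j,j+1)=x_i(t_j,j)$. Thus immediately after the switch the ``offset'' satisfies $x_i-q_i=0$. Crucially, since the jump map updates only those components of $q$ whose indices are in $D$ at that time, $q_i$ remains constant throughout all subsequent switching times of other agents until agent $i$ itself switches again. Hence, if $t_{j'}>t_j$ is the next switching time at which agent $i$ switches, we have $q_i(t_{j'},j')=q_i(t_j,j+1)$ and $|x_i(t_{j'},j')-q_i(t_{j'},j')|=\Delta/2$. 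Combining this with the velocity bound $|\dot x_i|\le M$ and absolute continuity of $x_i$, we obtain $t_{j'}-t_j\ge \Delta/(2M)$.

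Therefore, on any compact interval $[0,T]$ each agent can switch at most $\lfloor 2MT/\Delta\rfloor +1$ times, so summing over the $N$ agents bounds the total number of switching times in $[0,T]$ by $N(\lfloor 2MT/\Delta\rfloor +1)$, which is finite. Since each switching time of the hybrid solution corresponds to at least one agent-switch, this shows the set of switching times is locally finite.

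The main conceptual point, and what I expect to be the only real delicacy, is the observation that $q_i$ is constant between two consecutive switches of agent $i$: even though many other discrete transitions of the hybrid system may occur in between, they affect other components of $q$ but leave $q_i$ untouched, so the offset $|x_i-q_i|$ genuinely has to accumulate from $0$ to $\Delta/2$ under the continuous flow of $x_i$ alone, whose speed is uniformly controlled by $M$.
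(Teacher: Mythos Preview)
Your proposal is correct and follows essentially the same approach as the paper: both arguments establish a uniform per-agent dwell time by combining the velocity bound on $x_i$ from Lemma~\ref{l1} with the observation that, after agent $i$ switches, the offset $x_i-q_i$ is reset to zero and must traverse a distance $\Delta/2$ before the next switch of that agent. The only cosmetic difference is that the paper concludes by contradiction (assuming a Zeno accumulation and extracting an agent that switches infinitely often), whereas you count switches directly on $[0,T]$; the underlying idea is identical.
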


\begin{proof}
First of all we remark that for each agent $j$, each time a
switching occurs, there must elapse an interval of time of length
at least
$\frac{\Delta/2}{||L||_\infty (||q(0,0)||_\infty+\Delta/2)}$
before the agent $i$ switches again: that is, for each
agent the inter-switching intervals have lengths which are bounded
from below.  Indeed, the agent's state $x_i(t,j)$
evolves with speed not larger than $||L||_\infty
(||q(0,0)||_\infty+\Delta/2)$ (see Lemma~\ref{l1}) and, after each
switching, it has to cover a distance of length at least
$\Delta/2$ before it fulfills a switching condition again.

Let us now suppose  that there exists a solution $z(t,j)$ and a
sequence of switching times $t_j$, with $(t_j,j)\in {\rm dom}z$,
such that $j\to \infty$  and
\be\label{zeno} \lim_{j\to +\infty}\dst\sum_{k=0}^j
(t_{k+1}-t_k)=T<+\infty\;. \ee
Because the number of switches is infinite and the number of
agents is finite, then there is at least an index $i\in {\V}$
for which $q_i$ switches infinitely many times. We can then extract the
sequence $t_{i_k}$ of times at which $q_i$ changes its value. Clearly,
$$
\lim_{j\to +\infty}\dst\sum_{k=0}^j
(t_{i_{k+1}}-t_{i_k})<+\infty\;.
$$
On the other hand, thanks to the previous argument, one has that
$$t_{i_{k+1}}-t_{i_k}\geq \frac{\Delta/2}{||L||_\infty
(||q(0,0)||_\infty+\Delta/2)},$$ and then
$$
\lim_{j\to +\infty}\dst\sum_{k=0}^j
(t_{i_{k+1}}-t_{i_k})=+\infty\;,
$$
i.e.\ a contradiction.
\end{proof}

\begin{remark}[On the ``continuous component" of the hybrid time domain]
Combining Lemma \ref{lemma:Basic-hyst} and Proposition
\ref{p.no.chattering} it is possible to conclude that
the continuous component of the hybrid time domain
 is unbounded.
Namely
${\rm dom} z$  is
either the union of infinitely many intervals of the form $[t_j,
t_{j+1}]\times \{j\}$, with $0=t_0\le t_1\le \ldots$, or the union
of finitely many of such intervals, the last one being $[t_j,
+\infty)\cup \{j\}$. Moreover, in the former case, necessarily
$\cup_{j\in \N} [t_j, t_{j+1}]=\R_{\ge 0}$ by Proposition
\ref{p.no.chattering}.
\end{remark}

\begin{remark}[Data rate]
In the case of quantizers with hysteresis, thanks to Lemma~\ref{l1} and Proposition~\ref{p.no.chattering}, one can find an upper bound on the the data rate at which each agent transmits information to its neighbors.
In fact, the proof of Proposition~\ref{p.no.chattering} tells us that the information regarding the state is transmitted at most every
$T:=\frac{\Delta/2}{||L||_\infty (||q(0,0)||_\infty+\Delta/2)}$
units of time. The
information consists of packets of bits which encode the quantized
state of the agents. In view of Lemma~\ref{l1}, the number of
quantization levels employed by each agent is
$4\frac{||q(0,0)||_\infty+\Delta/2}{\Delta/2}+1$. Hence, to encode
these quantization levels,
$B:=\lceil \log_2 \left(8\frac{||q(0,0)||_\infty}{\Delta}+5\right)\rceil$
bits are needed.
We conclude that each agent transmits information at a data rate
which is not larger than
\[
\dst\frac{B}{T}= \left\lceil \log_2 \left(8
\frac{||q(0,0)||_\infty}{\Delta}+5\right)\right\rceil
\left(2\frac{||q(0,0)||_\infty}{\Delta}+1\right) ||L||_\infty.
\]
\end{remark}

\medskip

We can also prove that the solution to (\ref{hs}) preserves the average of the continuous
states.
\begin{lemma}[Average preservation]\label{l.preserve}
For each $z(0,0)\in X_0$, the solution $z(t,j)$ to (\ref{hs})
is such that $N^{-1}\1^\ast x(t,j)=N^{-1}\1^\ast
x(0,0)$, for all $(t,j)\in {\rm dom} z$.
\end{lemma}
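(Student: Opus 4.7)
The plan is to mirror the argument of Lemma~\ref{lem:AveragePreserved} but adapted to the hybrid time domain, treating the continuous flow and the discrete jumps separately.

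First I would handle the continuous portions: on any interval $I_j$ with non-empty interior, the solution satisfies $\dot x(t,j) = -L q(t,j)$ for almost every $t\in I_j$. Differentiating $N^{-1}\mathbf{1}^\ast x(t,j)$ with respect to $t$ gives
\begin{equation*}
\frac{d}{dt}\bigl(N^{-1}\mathbf{1}^\ast x(t,j)\bigr) = -N^{-1}\mathbf{1}^\ast L\, q(t,j) = 0,
\end{equation*}
since $\mathbf{1}^\ast L = \mathbf{0}^\ast$ by the weight-balancing part of Assumption~\ref{ass:a1}. Hence $N^{-1}\mathbf{1}^\ast x(t,j)$ is constant on each $I_j$.

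Next I would handle the jumps: whenever $(t,j), (t,j+1) \in \mathrm{dom}\, z$, the discrete update~\eqref{discrete} prescribes $x^+ = x$, so $N^{-1}\mathbf{1}^\ast x(t,j+1) = N^{-1}\mathbf{1}^\ast x(t,j)$ trivially. Combining the two cases by induction on $j$ along the hybrid time domain yields $N^{-1}\mathbf{1}^\ast x(t,j) = N^{-1}\mathbf{1}^\ast x(0,0)$ for every $(t,j)\in \mathrm{dom}\, z$.

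There is no real obstacle here: the only things needed are (i) weight-balancedness to kill $\mathbf{1}^\ast L$, and (ii) the explicit jump rule $x^+ = x$ which leaves the continuous component invariant. The structural results established earlier (Lemma~\ref{lemma:Basic-hyst} ensuring existence and forward uniqueness, and Proposition~\ref{p.no.chattering} ensuring that the hybrid time domain is well-behaved) guarantee that concatenating finitely many continuous pieces and jumps on any compact hybrid-time interval is legitimate, so the induction carries through to every $(t,j)\in\mathrm{dom}\, z$.
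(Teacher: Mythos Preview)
Your proposal is correct and follows essentially the same route as the paper's proof: treat the discrete jumps (where $x^+=x$ trivially preserves the average) and the continuous flows (where $\mathbf{1}^\ast L=\mathbf{0}^\ast$ by weight-balance kills the derivative) separately, then concatenate. The only slight over-caution is your appeal to Proposition~\ref{p.no.chattering}, which is not actually needed here since the induction on $j$ along the hybrid time domain goes through regardless of whether the set of switching times is locally finite.
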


\begin{proof}
By (\ref{discrete}), for all $(t,j)\in {\rm dom} z$ such that
$(t,j+1)\in {\rm dom} z$, $x(t,j)= x(t,j+1)$, i.e.\ during
discrete transitions the value of $x$ does not change and
therefore the average is trivially preserved. For $z(t,j)\in C$,
the continuous state $x(t,j)$ satisfies $N^{-1}\1^\ast
\dot x(t,j)= -N^{-1}\1^\ast L q(t,j)$ for all $t\in I_j$.
Since the graph is weight-balanced, then $\1^\ast
L=\mathbf{0}^\ast$, and therefore the average is preserved also
during continuous flow. This leads to the thesis.
\end{proof}

%%%%%%%%%%%%%%%%%%%%%%%%%%%%%%%%%%%%%%%%%%%%%%%%%%%%%%%%%%%%%%%%%
\bigskip
Consider now the set of equilibria of the system, i.e.\ the set of
states such that all the evolutions originating from those remain
in the same state. This set is characterized in the following
statement.
\begin{lemma}[Equilibria]
The set of equilibria for the system (\ref{hs}) is ${\Eq}={\Eq}_1\cup {\Eq}_2$, with
\[\ba{l}
{\Eq}_1= \{(x,q)\in X \,:\, \exists k\in \Z\;\textup{ s.t.}\;
q_i=k\Delta,\, \,\textup{and }\,
-\frac{\Delta}{2}+q_i<x_i<\frac{\Delta}{2}+q_i,\, \forall\, i\in
{\V}\}\\
{\Eq}_2= \{(x,q)\in X \,:\, \exists k\in \Z\,\;\textup{ s.t.}\;\,
q_i=k\Delta+\frac{\Delta}{2},\, \,\;\textup{and }\,
-\frac{\Delta}{2}+q_i<x_i<\frac{\Delta}{2}+q_i,\, \forall\, i\in
{\V}\}\;. \ea\]
\end{lemma}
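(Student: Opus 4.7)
The plan is to identify the equilibria of \eqref{hs} with those $z_0=(x_0,q_0)$ for which the constant trajectory $z(t,j)\equiv z_0$ on the hybrid time domain $[0,+\infty)\times\{0\}$ is a solution in the sense recalled just before Lemma~\ref{lemma:Basic-hyst}. Such a trajectory is a solution if and only if $z_0\in C$ and the continuous vector field satisfies $f(z_0)=0$: a point in $D$ is inadmissible because the discrete update \eqref{discrete} would move $z_0$, and inside $C$ the flow must vanish for the trajectory to stay put.

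First, I would rule out $z_0\in D$. By the definition of $D$ there exists $i\in\V$ with $x_{0,i}\ge \Delta/2+q_{0,i}$ or $x_{0,i}\le -\Delta/2+q_{0,i}$; the rule \eqref{discrete} then changes $q_{0,i}$ by $\pm \Delta/2$, so $g(z_0)\neq z_0$ and the constant trajectory is not admissible. Hence every equilibrium lies in $C$.

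Next, assuming $z_0\in C$, the continuous dynamics reduce to $\dot x=-Lq_0$, $\dot q=0$, which vanish iff $Lq_0=0$. Under Assumption~\ref{ass:a1} one has $\ker L={\rm span}\{\mathbf{1}\}$: the inclusion $\supseteq$ is immediate from $L\mathbf{1}=\mathbf{0}$, while the reverse follows from Lemma~\ref{lemma:t1}, because $Lq_0=0$ yields $q_0^\ast\sym(L)q_0=q_0^\ast Lq_0=0$, hence by part~(ii) $q_0=N^{-1}\mathbf{1}\mathbf{1}^\ast q_0$. Thus $q_0=c\mathbf{1}$ for some scalar $c$, and since the components of $q_0$ lie in $\Delta\Z\cup(\Delta\Z+\Delta/2)$, either $c=k\Delta$ or $c=k\Delta+\Delta/2$ for some $k\in\Z$. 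Combined with the strict inequalities characterizing membership in $C$, namely $-\Delta/2+q_{0,i}<x_{0,i}<\Delta/2+q_{0,i}$ for all $i\in\V$, this produces exactly the sets $\Eq_1$ and $\Eq_2$.

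The reverse inclusion is immediate: for any $z_0\in\Eq_1\cup\Eq_2$ we have $q_0\in{\rm span}\{\mathbf{1}\}$, so $Lq_0=0$, and by construction $z_0\in C$; hence the constant function on $[0,+\infty)\times\{0\}$ satisfies both conditions in the definition of solution and is in fact, by Lemma~\ref{lemma:Basic-hyst}, the unique forward solution issuing from $z_0$. I do not anticipate any serious obstacle: the only care needed is in matching the hybrid-system notion of equilibrium to the two conditions ``$z_0\in C$'' and ``$f(z_0)=0$'', and in invoking Assumption~\ref{ass:a1} to pin down $\ker L={\rm span}\{\mathbf{1}\}$.
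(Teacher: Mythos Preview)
Your proof is correct and follows essentially the same approach as the paper's: both argue that an equilibrium cannot lie in $D$ (since the jump map \eqref{discrete} necessarily changes some $q_i$), must therefore lie in $C$, and then reduce the condition $f(z_0)=0$ to $Lq_0=0$, i.e.\ $q_0\in\mathrm{span}\{\mathbf{1}\}$ intersected with the discrete lattice $(\Delta\Z\cup(\Delta\Z+\Delta/2))^N$. Your version is slightly more explicit in invoking Lemma~\ref{lemma:t1} to justify $\ker L=\mathrm{span}\{\mathbf{1}\}$, which the paper simply asserts.
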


\begin{proof}
Suppose that the initial condition $z(0,0)$ belongs to ${\Eq}_1$. Then, since $q_i(0,0)=k\Delta$ and
$-\frac{\Delta}{2}+q_i(0,0)<x_i(0,0)<\frac{\Delta}{2}+q_i(0,0)$
for all $i\in {\V}$, $z(0,0)\in C$,  no discrete transition is
triggered, and moreover $-L q(0,0)=\mathbf{0}$, i.e.\ during the
continuous flow the state remains unchanged. The same can be argued if the
initial condition $z(0,0)$ belongs to
${\Eq}_2$.

Suppose now that $z(0,0)\in X$ is an equilibrium point, i.e.\
$z(t,j)=z(0,0)$ for all $(t,j)\in {\rm dom}z$. Then $z(0,0)\not
\in D$, because otherwise a discrete update via (\ref{discrete})
would occur leading to $q(0,1)\ne q(0,0)$, a contradiction. Since
$X=C\cup D$, then $z(0,0)\in C$. In order to have
$x(t,0)=x(0,0)$ for all $t\in I_0$, it must necessarily be true
that $-L q(0,0)=\mathbf{0}$, i.e.\ $q(0,0)\in {\rm span}
\1\cap (\Delta \Z\cup (\Delta \Z+\Delta/2))^N$ or,
equivalently, that for each $i\in {\V}$, either
$q_i(0,0)=k\Delta+\frac{\Delta}{2}$ or $q_i(0,0)=k\Delta$ for some
$k\in \mathbb{Z}$. Since $z(0,0)\in C$, then
necessarily
$-\frac{\Delta}{2}+q_i(0,0)<x_i(0,0)<\frac{\Delta}{2}+q_i(0,0)$,
for all $i\in {\V}$. We conclude that $z(0,0)\in
{\Eq}_1\cup {\Eq}_2$.
\end{proof}

\medskip

Differently from the case of quantizers with no hysteresis, the
set of equilibria ${\Eq}$ may not be globally attractive for
the solutions to~(\ref{hs}) as the following example
shows.
\begin{example}[Finite-time limit cycle]\label{ex:LimitCycle}
Consider the system~(\ref{continuous})-(\ref{discrete}) with $N=2$
and where $\dot x=-L q$ is
\be\label{continuous.N=2} \ba{rcl}
\dot x_1 &=& q_2-q_1\\
\dot x_2 &=& q_1-q_2\;. \ea \ee
Let $q_1 \in \Delta \integers$ be fixed and consider the initial
condition
$$z_1(0,0)=(q_1-\frac{\Delta}{4},q_1),\  z_2(0,0)=(q_1+\frac{3\Delta}{4}
,q_1+\Delta).$$
The continuous dynamics is given by
$$
\ba{rcl}
\dot x_1 &=& {\Delta}\\
\dot x_2 &=& -{\Delta}\; \ea $$ so that
$$
\ba{rcl}
 x_1 (t) &=& q_1-\frac{\Delta}{4}+{\Delta}t\\
 x_2 (t)&=& q_1+\frac{3\Delta}{4}-{\Delta}t\; \ea $$
on the interval $[0,\frac{1}{4}]\times \{ 0 \}$.
Indeed, at time $t=\frac{1}{4}$ we have $x_1 (\frac 14) = q_1,\
 x_2 (\frac 14)= q_1+\frac{\Delta}{2}$, so that a discrete transition is triggered.
It holds
$$z_1(\frac 14,1)=(q_1,q_1), \qquad  z_2(\frac 14,1)=(q_1+\frac{\Delta}{2}
,q_1+\frac{\Delta}{2}).$$
The continuous dynamics is then given by
$$
\ba{rcl}
\dot x_1 &=& \frac{\Delta}{2}\\
\dot x_2 &=& -\frac{\Delta}{2}\; \ea $$ so that
$$
\ba{rcl}
 x_1 (t) &=& q_1+\frac{\Delta}{2}(t-\frac 14)\\[2mm]
 x_2 (t)&=& q_1+\frac{\Delta}{2}-\frac{\Delta}{2}(t-\frac 14)\; \ea $$
on the interval $[\frac 14,\frac 54]\times \{ 1 \}$.
At  time $t=\frac 54$ we have $x_1 (\frac 54) = q_1+\frac{\Delta}{2},\
 x_2 (\frac 54)= q_1$, so that a new discrete transition is triggered. We
 have:
$$z_1(\frac 54,2)=(q_1+\frac{\Delta}{2},q_1+\frac{\Delta}{2}),\ \qquad  z_2(\frac 54,2)=(q_1
,q_1).$$
In the next interval the continuous dynamics is given by
$$
\ba{rcl}
\dot x_1 &=& -\frac{\Delta}{2}\\
\dot x_2 &=& \frac{\Delta}{2}\; \ea $$ so that
$$
\ba{rcl}
 x_1 (t) &=& q_1+\frac{\Delta}{2}- \frac{\Delta}{2}(t-\frac 54)\\[2mm]
 x_2 (t)&=& q_1+\frac{\Delta}{2}(t-\frac 54).\; \ea $$
At  time $t=\frac 94$ we have $x_1 (\frac 94) = q_1=x_1(\frac 14)$ and $x_2 (\frac 94)= q_1+\frac{\Delta}{2}=x_2(\frac 14)$, so that a new discrete transition is triggered. Hence, $z(\frac94,3)=z(\frac14,1)$ and the evolution has entered a cycle.

The conclusion is that {\em there exist trajectories of system~\eqref{continuous}-\eqref{discrete} which converge to periodic trajectories in finite time}, and in particular do not converge to the set of equilibria. In this bidimensional example, the exhibited periodic trajectory lies in the closure of the set of equilibria.  \hfill\QED
\end{example}
It should be noted that finite-time convergence of trajectories implies that solutions to~\eqref{hs} are not {\em backward} unique, whereas forward uniqueness has been proved in Lemma~\ref{lemma:Basic-hyst}.

In view of the existence of limit cycles, we can prove a convergence result analogous to Corollary~\ref{c9}.
\begin{theorem}[Finite-time convergence]\label{th.quantized}
Consider the system~(\ref{continuous})-(\ref{discrete}). Then, for
any $\eps\in (0,1)$, there exists $T(\varepsilon)>0$
such that the solution $z(t,j)$ to~(\ref{continuous})-(\ref{discrete}) satisfies
\[
||x(t,j)-\frac{\1\1^\ast}{N}x(0,0)||\le
\dst\frac{1}{1-\varepsilon}\dst\frac{||L||}{\lambda_2(\sym(L))} \dst\frac{\Delta}{2}\sqrt{N}
\]
for all $(t,j)\in{\rm dom} z$ such that $t\ge T(\varepsilon)$.
\end{theorem}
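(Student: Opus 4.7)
The plan is to mimic closely the argument of Corollary~\ref{c9}, but to work with the hybrid model. Let $y(t,j) = \Omega x(t,j) = x(t,j) - \xave(0)\1$; by Lemma~\ref{l.preserve} this equals $x(t,j) - N^{-1}\1\1^\ast x(0,0)$, so the claim is equivalent to $\|y(t,j)\|\le \frac{1}{1-\varepsilon}\frac{\|L\|}{\lambda_2(\sym(L))}\frac{\Delta\sqrt{N}}{2}$ for all $(t,j)$ with $t\ge T(\varepsilon)$. I introduce the Lyapunov function $V(y) = \tfrac{1}{2}y^\ast y$ and analyze its evolution separately along the continuous and discrete parts of the trajectory.

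During a discrete jump, i.e.\ at $(t,j)\in \mathrm{dom}\,z$ with $(t,j+1)\in \mathrm{dom}\,z$, the update rule $x^+ = x$ leaves $y$, and hence $V(y)$, unchanged. During continuous flow, $z(t,j)\in C$ and the very definition of $C$ guarantees $|x_i-q_i|\le \Delta/2$ for every $i\in \V$, so that $\|q-x\|\le \sqrt{N}\,\Delta/2$. Using $\dot{x}=-Lq$ and writing $q=x+(q-x)$, I obtain
\begin{equation*}
\nabla V(y)\cdot \dot y
 = -y^\ast L q
 = -y^\ast \sym(L)\,y - y^\ast L(q-x)
 \le -\lambda_2(\sym(L))\|y\|^2 + \|L\|\,\|y\|\,\tfrac{\Delta\sqrt{N}}{2},
\end{equation*}
by Lemma~\ref{lemma:t1} and Cauchy--Schwarz. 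This is exactly the inequality which drove Corollary~\ref{c9}.

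From here the argument proceeds exactly as in Corollary~\ref{c9}: as long as $\|y(t,j)\|>\frac{1}{1-\varepsilon}\frac{\|L\|}{\lambda_2(\sym(L))}\frac{\Delta\sqrt{N}}{2}$, the inequality above yields $\frac{d}{dt}V(y)\le -2\varepsilon\lambda_2(\sym(L))V(y)$ during flow. Since jumps leave $V$ unchanged, concatenating the exponential decrease across the intervals $I_j$ gives $V(y(t,j))\le e^{-2\varepsilon\lambda_2(\sym(L))\,t}V(y(0,0))$ as long as the solution lies outside the target ball. Solving for the first time at which the threshold is reached furnishes the same explicit estimate as~\eqref{eq:Teps-unif}, with $y(t_0)$ replaced by $y(0,0)$. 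Once $\|y(t,j)\|$ has entered the ball, the same Lyapunov bound shows it cannot leave (during flow the derivative of $V$ becomes non-positive on the boundary, and jumps freeze $V$), so the estimate persists for all later hybrid times.

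The main technical point to verify carefully is the transition between continuous and discrete phases, which could in principle invalidate the standard Lyapunov comparison: here it is benign because $V$ is strictly jump-invariant and, by Proposition~\ref{p.no.chattering}, only finitely many jumps occur in any bounded interval of continuous time, so the exponential decrease during flow integrates cleanly against the continuous time variable $t$. A secondary subtlety is that the bound $\|q-x\|\le \sqrt{N}\Delta/2$ only strictly holds on the open set $C$ and extends by continuity to its closure, which is sufficient because the set of jump times has zero Lebesgue measure in $t$. No other obstacles are expected.
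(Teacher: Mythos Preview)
Your proposal is correct and follows essentially the same route as the paper: the same Lyapunov function $V(y)=\tfrac12 y^\ast y$ on the disagreement vector, the same decomposition $-y^\ast Lq=-y^\ast\sym(L)y-y^\ast L(q-x)$ exploiting $|x_i-q_i|\le\Delta/2$ on $C$, the same threshold argument yielding $\dot V\le -2\varepsilon\lambda_2(\sym(L))V$ outside the target ball, and the same concatenation across jumps (which leave $V$ fixed) to produce the exponential bound and the explicit $T(\varepsilon)$ of~\eqref{eq:Teps-hyst}. Your added remarks about Proposition~\ref{p.no.chattering} and the closure of $C$ are sound but not strictly needed, since the differential inequality holds almost everywhere on each $I_j$ and that suffices for the integration.
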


\begin{proof}
The proof follows the lines of those for Theorem~\ref{theor:ConvergenceStrip} and Corollary~\ref{c9}.
Define $y(t,j)=\Omega
x(t,j)=(I-\frac{\1\1^\ast}{N})x(t,j)$ and observe that for $z(t,j)\in C$, $y(t,j)$ satisfies
$$
\dot y(t,j)=(I-\frac{\1\1^\ast}{N})\dot x(t,j)=-(I-\frac{\1\1^\ast}{N})L q(t,j)= -L q(t,j),$$
while at each switching time $t_i$,  $y(t_i,i+1)=y(t_i,i)$. Given
$V(y)=\frac{y^\ast y}{2}$, we investigate $\frac{d V(y(t,j))}{dt}=\nabla V(y(t,j)) \dot y(t,j)$ when $z(t,j)\in C$. By Lemma~\ref{lemma:t1}, we obtain:
\[\ba{rcl}
\nabla V(y)(-L q)
&=& \nabla V(y)(-L(x+ q-x))\\
&= & -\nabla V(y)Lx- \nabla V(y)L(q-x)\\
&= & -\nabla V(y)L y- \nabla V(y)L(q-x)\\
& \le  & -\lambda_2(\sym(L))
||y||^2 + ||y||\,||L||\dst\frac{\Delta}{2} \sqrt{N}\\[2mm]
&= & -\lambda_2(\sym(L))||y|| \left(||y|| -
\dst\frac{||L||}{\lambda_2(\sym(L))} \dst\frac{\Delta}{2}
\sqrt{N}\right)\;, \ea\]
where in the inequality we exploit the fact that $z\in C$ implies
$q_i-\frac{\Delta}{2}\le x_i\le q_i+\frac{\Delta}{2}$ and hence
$|x_i-q_i|\le \frac{\Delta}{2}$ for all $i\in {\V}$.
If \be\label{yy} ||y|| > \dst\frac{1}{1-\varepsilon}
\dst\frac{||L||}{\lambda_2(\sym(L))} \dst\frac{\Delta}{2}
\sqrt{N}\;, \ee then
\[
\nabla V(y)(-Lq)\le -\varepsilon\lambda_2(\sym(L))||y||^2=-2\varepsilon\lambda_2(\sym(L))V(y)\;.
\]
Hence, for all  $y(t,j)\in C$ which satisfy~(\ref{yy}), it holds
$$\frac{d V(y(t,j))}{dt}\le -2\varepsilon\lambda_2(\sym(L))V(y(t,j)).$$ At each  switching time  $(t_i,i)$, on the
other hand, we have $V(y(t_i,i+1))=V(y(t_i,i))$. As a consequence,
denoted by $t_0=0, t_1, t_2, \ldots, t_j$ the switching times
which precede $t$, we have
\[\ba{rcl}
V(y(t,j))& \le & {\rm e}^{-2\varepsilon\lambda_2(\sym(L))(t-t_j)} V(y(t_j,j))\\
& \le & {\rm e}^{-2\varepsilon\lambda_2(\sym(L))(t-t_j)} V(y(t_j,j-1))\\
& \le & {\rm e}^{-2\varepsilon\lambda_2(\sym(L))(t-t_j)} {\rm e}^{-2\varepsilon\lambda_2(\sym(L))(t_j-t_{j-1})}
V(y(t_{j-1}, j-1)),\\
&\vdots&\\
&\le & {\rm e}^{-2\varepsilon\lambda_2(\sym(L))(t-t_j)} {\rm
e}^{-2\varepsilon\lambda_2(\sym(L))(t_j-t_{j-1})} \ldots {\rm
e}^{-2\varepsilon\lambda_2(\sym(L))(t_1-t_{0})} V(y(t_{0},0)),
\ea\]
which implies
\[
V(y(t,j))\le {\rm e}^{-2\varepsilon\lambda_2(\sym(L)(t-t_0)} V(y(t_0,0))
\]
and therefore
\[
||y(t,j)||\le {\rm e}^{-\varepsilon\lambda_2(\sym(L))(t-t_0)}
||y(t_0,0)||\;.
\]
From the latter we conclude that, if at time $(t_0,0)$,
condition~(\ref{yy}) is satisfied, then after at most
\begin{equation}\label{eq:Teps-hyst}
T(\varepsilon)=\max \left\{ 0, \dst\frac{-1}{\varepsilon \lambda_2(\sym(L))}
\ln \left( \dst\frac{1}{1-\varepsilon}
\dst\frac{||L||}{\lambda_2(\sym(L))} \dst\frac{\Delta}{2}
\dst\frac{\sqrt{N}}{||y(t_0,0)||} \right) \right\}
\end{equation}
units of time, $||y(t,j)||$, with $t\ge T(\varepsilon)$, satisfies
\[
||y|| < \dst\frac{1}{1-\varepsilon}
\dst\frac{||L||}{\lambda_2(\sym(L))} \dst\frac{\Delta}{2}
\sqrt{N}\;,
\]
and continue  to do so from that time on. The thesis then follows,
recalling the definition of $y(t,j)$ and that thanks to Lemma~\ref{l.preserve}, $\1^\ast
x(t,j)=\1^\ast x(0,0)$ for all $(t,j)$.
\end{proof}

\bigskip

One might wonder whether it is possible to make a stronger claim
of convergence to a set which does not depend on the network
topology, for instance to the closure of the set of equilibria.
The answer seems to be negative in general, as we show in the next
subsection for systems with more than two agents  by means of
simulation examples: limit cycles are inherent to the hysteretic
dynamics. An interesting open question, which we touch upon
commenting the simulations, is to relate these limit cycles, and
in particular their size, to the graph topology.

\begin{remark}[Convergence rate]\label{rem.conv}
From the proofs of  Corollary~\ref{c9} and of
Theorem~\ref{th.quantized} we can see that, away from a strip containing the equilibria, the rate of convergence can be bounded by the same
quantity for both algorithms. Actually, the rate is proportional
to $\lambda_2(\sym(L)),$ which is known to be the rate of convergence of the
non-quantized linear consensus dynamics. In other
words, our results suggest that the use of quantized measurements, with or without
hysteresis, does not affect the rate of convergence of the
consensus dynamics.
\end{remark}

\subsection{Simulations}\label{sec:Simulations}
In this section, we collect a few simulation
examples, focusing on the effects of hysteresis and on the
convergence properties of~\eqref{hs}.
We performed our simulations using a Matlab program which implements the hybrid
system~\eqref{hs}, and solves systems~\eqref{eq:qAlgo} and~\eqref{continuous} by an explicit Euler scheme with time step $\delta t=0.005$. In the set of simulations we show here, we assume that $N=10$, $A_{ij}\in\{0,1\}$ for every $i,j\in\V$, and $\Delta=0.05$, and that the
initial condition is $x(0)=[0.91728,0.26898,0.76538,0.18858,0.28738,0.09098,0.57608,0.68328,0.54648,0.42558]^*$.
\begin{figure}[htb]
\subfigure[]{
\includegraphics[width=.49\textwidth]{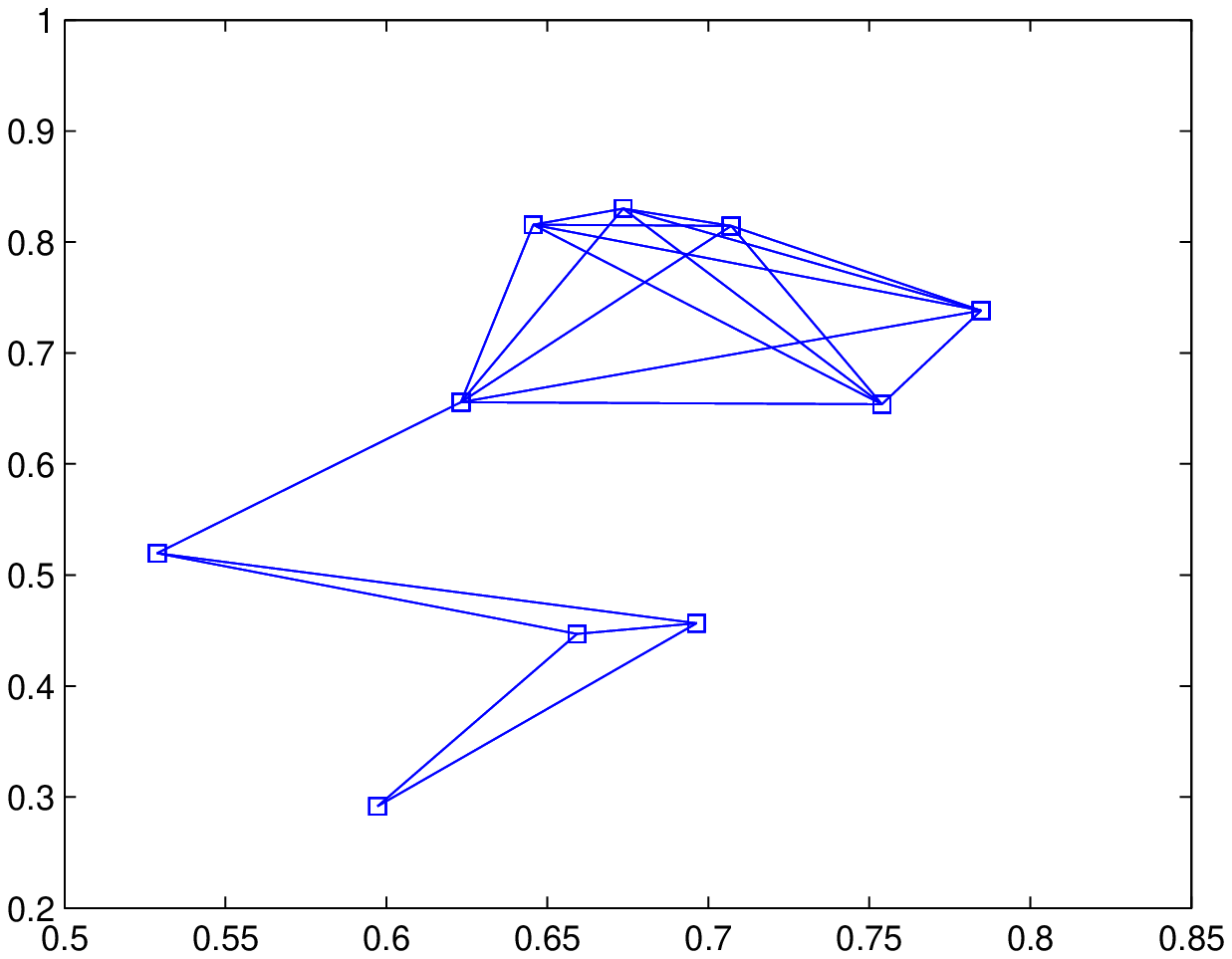}
\label{fig:RGGex-cluster}}
\subfigure[]{
\includegraphics[width=.49\textwidth]{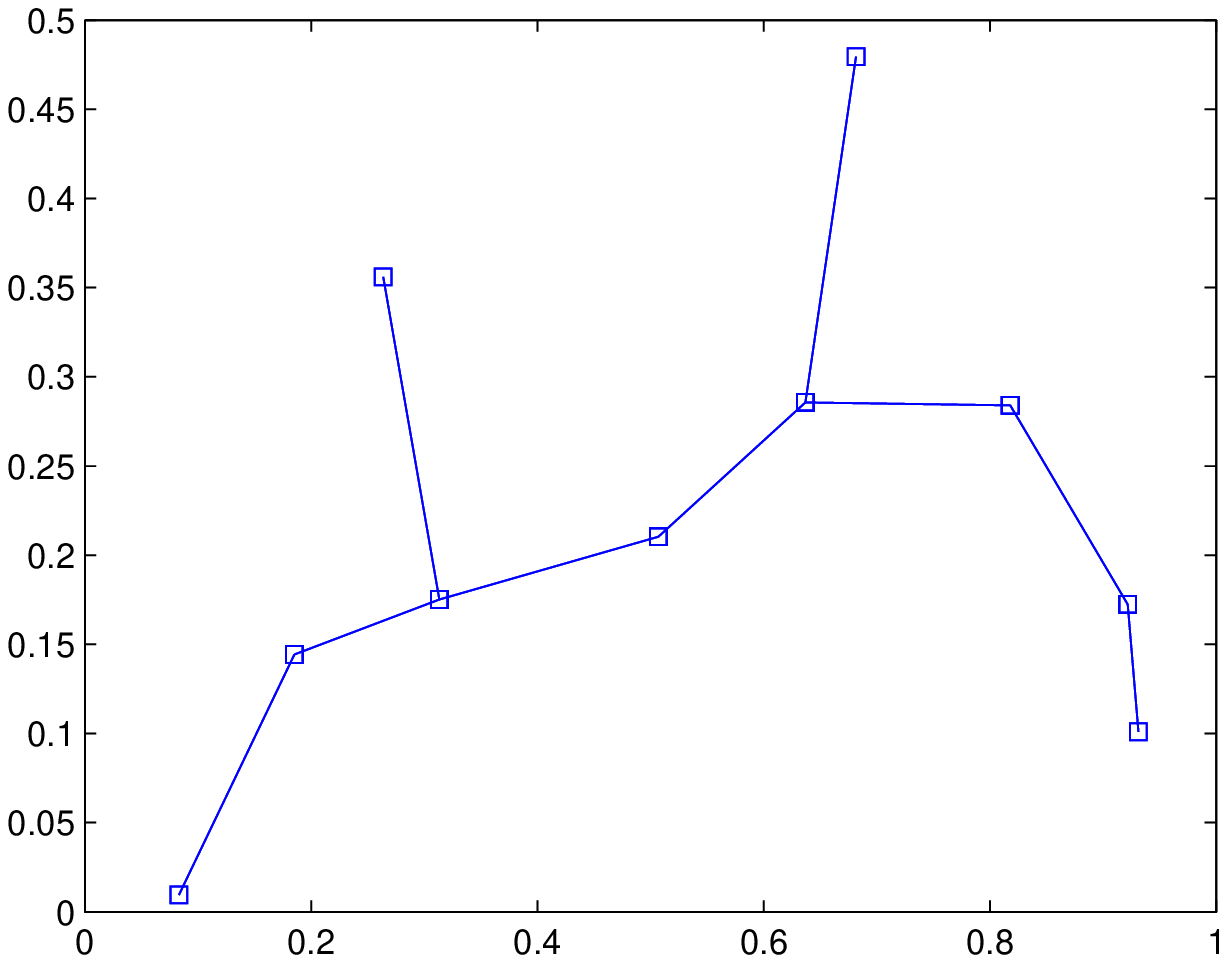}
\label{fig:RGGex-tree}}
\caption{Two sample graphs, obtained as realizations of a random
geometric graph~\cite{MP:03} in the unit square $[0,1]^2$ with
connectivity radius equal to $0.2$.} \label{fig:RGGex}
\end{figure}
Let us first consider the two networks depicted in
Figure~\ref{fig:RGGex}. Figures~\ref{fig:RGGsolutions1}
and~\ref{fig:RGGsolutions2} show for these networks the computed
evolutions of~\eqref{eq:qAlgo} and~\eqref{hs}, in terms of the
quantized states $\qd(x(t))$ and $q(t,j)$.
Notice that solutions to~\eqref{eq:qAlgo} suffer from chattering. Indeed,
there are subgraphs of the graph in Figure~\ref{fig:RGGex-tree} which coincide with the graph discussed in Example~\ref{ex1}. Hence, the chattering observed in the
simulations is due to a sliding mode similar to the one described
in the example and to the numerical implementation of the
algorithm.
Instead, solutions to~\eqref{hs} show no chattering: as expected,
hysteresis prevents chattering, and may affect the convergence
rate near the equilibria. The latter statement is not in contrast with
Remark~\ref{rem.conv}. In both the examples above, the quantized
states of system~\eqref{hs} converge to consensus.
\begin{figure}[htb]
\includegraphics[width=.49\textwidth]{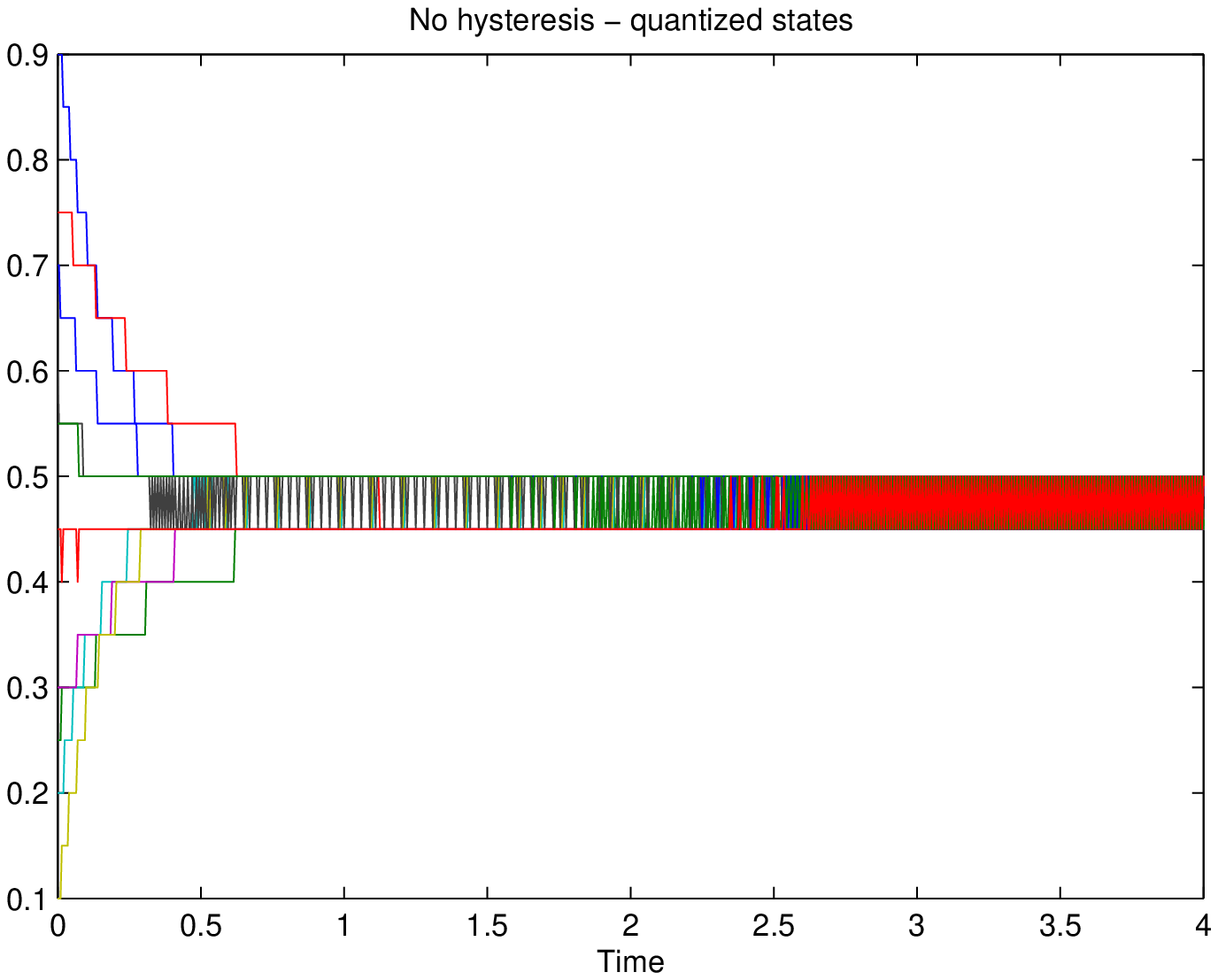}
\includegraphics[width=.49\textwidth]{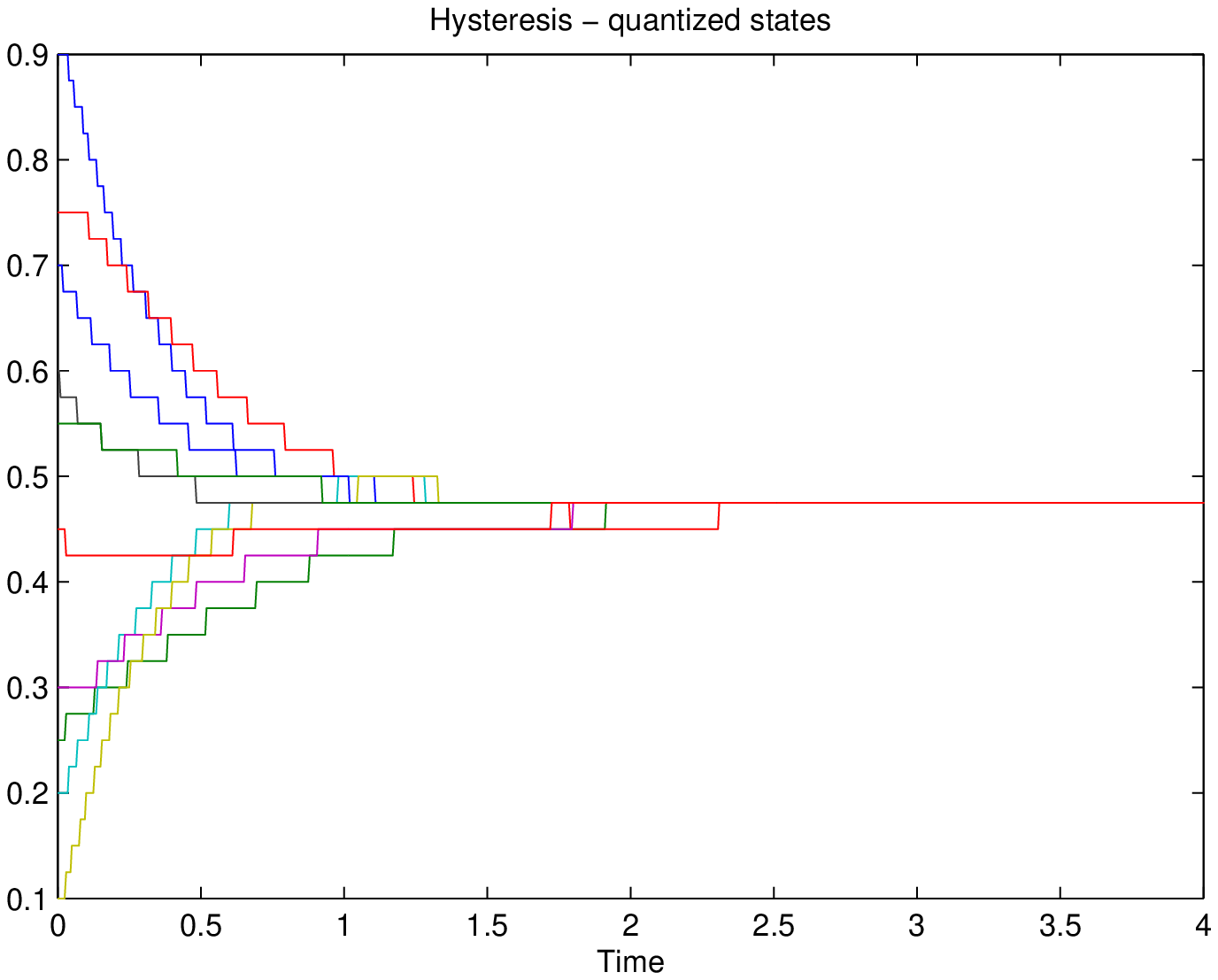}
\caption{Examples of evolutions of quantized dynamics on the graph
in Figure~\ref{fig:RGGex-cluster}.} \label{fig:RGGsolutions1}
\end{figure}
\begin{figure}[htb]
\includegraphics[width=.49\textwidth]{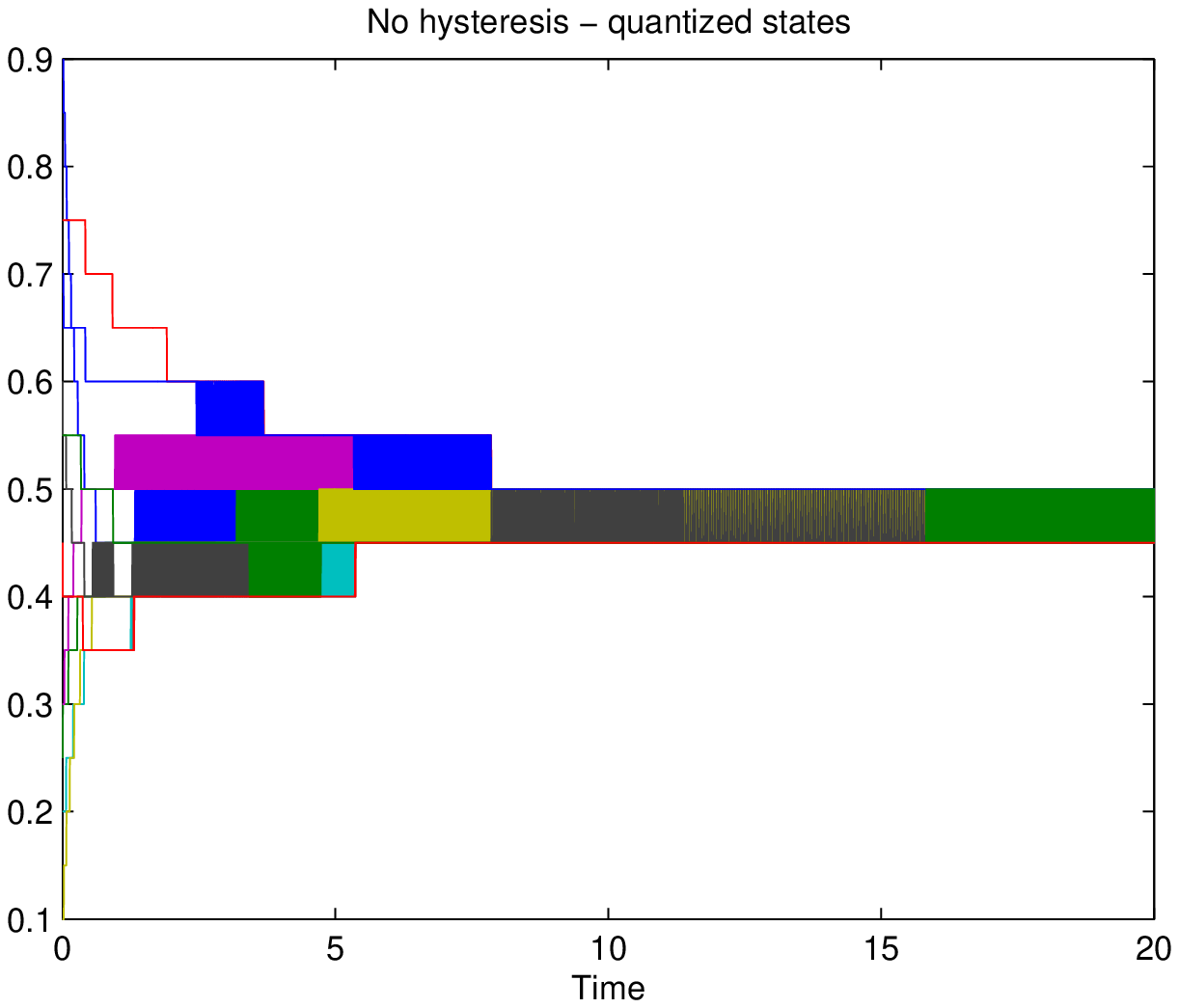}
\includegraphics[width=.49\textwidth]{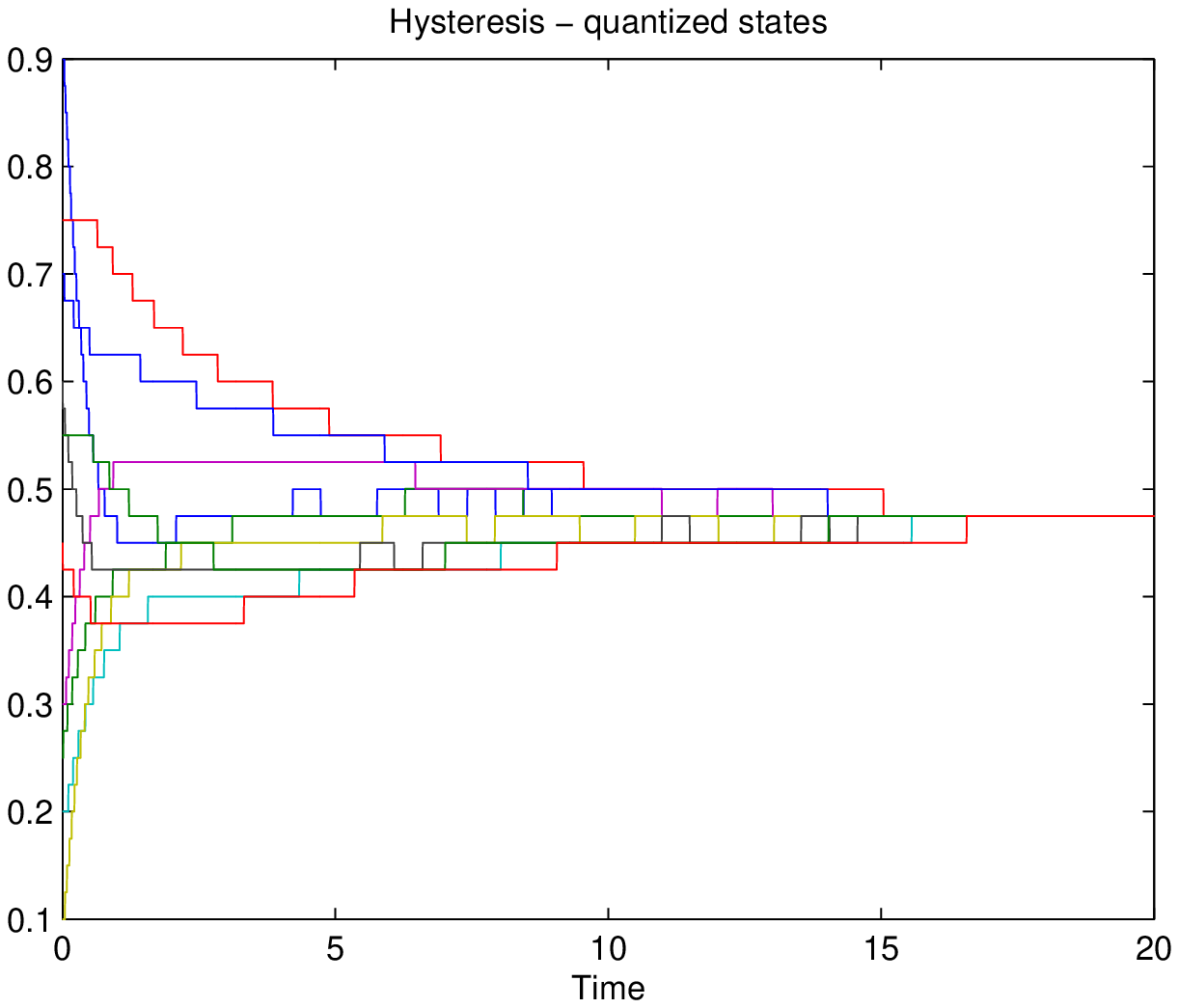}
\caption{Examples of evolutions of quantized dynamics on the graph
in Figure~\ref{fig:RGGex-tree}.} \label{fig:RGGsolutions2}
\end{figure}
To provide an example of limit cycles, we consider a {\em directed ring} topology, that is a graph such that $\neigh{i}=\{i+1\}$ for $i\in\until{N-1}$ and $\neigh{N}={1}.$%
 Figure~\ref{fig:RingSolutions}
shows that in this case hysteresis induces a limit cycle of
amplitude~$2\Delta$ on the quantized states. Furthermore, simulations demonstrate that larger
cycles can be obtained on larger rings.
\begin{figure}[htb]
\includegraphics[width=.49\textwidth]{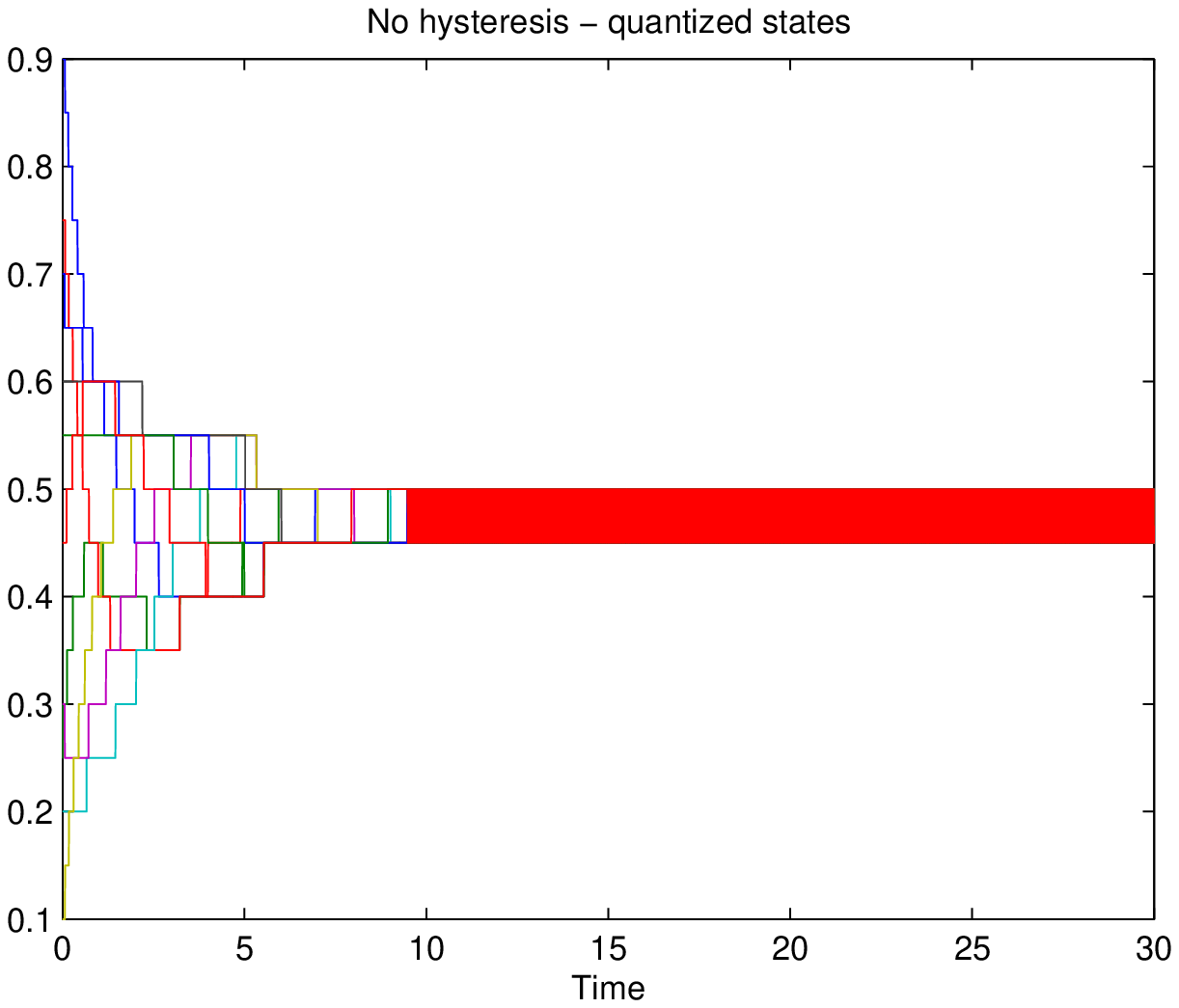}
\includegraphics[width=.49\textwidth]{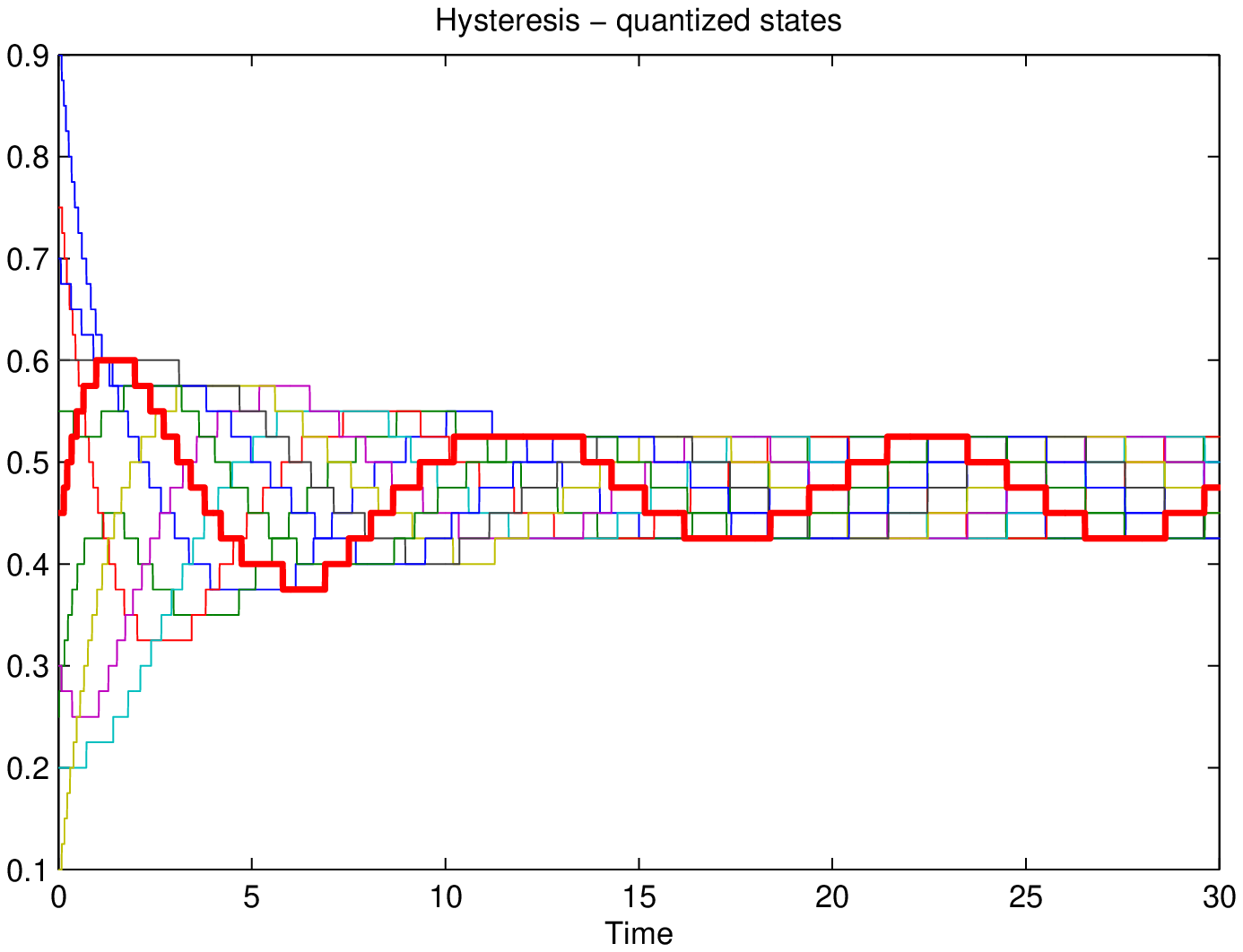}
\caption{Examples of evolutions of quantized dynamics on a
directed ring.} \label{fig:RingSolutions}
\end{figure}

\section{Conclusion}\label{sec:Outro}
In this paper we analyzed a consensus problem subject to quantized
communication, in terms of the stabilization of a dynamical system
by means of a discontinuous feedback. Two quantization rules have
been considered: a uniform static quantizer, and a hysteretic
quantizer, which we designed with the goal of avoiding chattering
phenomena. The convergence properties of both the resulting
systems have been studied, and illustrated through simulations.
Future work should include the analysis of other feedback consensus
dynamics with quantization effects, and the application of these techniques to the
problem of guaranteeing stable flocking of autonomous vehicles via
quantized feedback control.

%%%%%%%%%%%%%%%%%%%%%%%%%%%%%%%%%%%%%%%%%%%%%%%%%%%%%%%%%%%%%%%%%%%%%%%%%%%%%%%%%%%%%%%%%%%%%%%
%%%%%%%%%%%%%%%%%%%%%%%%%%%%%%%%%%%%%%%%%%%%%%%%%%%%%%%%%%%%%%%%%%%%%%%%%%%%%%%%%%%%%%%%%%%%%%%
%\bibliographystyle{plain}
%\bibliography{aliasFrasca,References,PF}

\begin{thebibliography}{10}

\bibitem{JPA-AC:84}
J.~P. Aubin and A.~Cellina.
\newblock {\em Differential inclusions}, volume 264 of {\em Grundlehren der
  Mathematischen Wissenschaften}.
\newblock Springer, Berlin, 1984.

\bibitem{TCA-MJC-MGR:08}
T.~C. Aysal, M.~J. Coates, and M.~G. Rabbat.
\newblock Distributed average consensus with dithered quantization.
\newblock {\em IEEE Transactions on Signal Processing}, 56(10):4905--4918,
  2008.

\bibitem{AB-FC:99}
A.~Bacciotti and F.~Ceragioli.
\newblock Stability and stabilization of discontinuous systems and nonsmooth
  {L}iapunov functions.
\newblock {\em {ESAIM:} Control, Optimisation \& Calculus of Variations},
  4:361--376, 1999.

\bibitem{FB-JC-SM:09}
F.~Bullo, J.~Cort{\'e}s, and S.~Mart{\'\i}nez.
\newblock {\em Distributed Control of Robotic Networks}.
\newblock Applied Mathematics Series. Princeton University Press, 2009.

\bibitem{RC-FB-SZ:10}
R.~Carli, F.~Bullo, and S.~Zampieri.
\newblock Quantized average consensus via dynamic coding/decoding schemes.
\newblock {\em International Journal of Robust and Nonlinear Control},
  20(2):156--175, 2010.

\bibitem{RC-FF-PF-SZ:10}
R.~Carli, F.~Fagnani, P.~Frasca, and S.~Zampieri.
\newblock Gossip consensus algorithms via quantized communication.
\newblock {\em Automatica}, 46(1):70--80, 2010.

\bibitem{JC:06b}
J.~Cort{\'e}s.
\newblock Finite-time convergent gradient flows with applications to network
  consensus.
\newblock {\em Automatica}, 42(11):1993--2000, 2006.

\bibitem{JC:08-csm}
J.~Cort{\'e}s.
\newblock Discontinuous dynamical systems -- a tutorial on solutions, nonsmooth
  analysis, and stability.
\newblock {\em {IEEE} Control Systems Magazine}, 28(3):36--73, 2008.

\bibitem{JC:08}
J.~Cort{\'e}s.
\newblock Distributed algorithms for reaching consensus on general functions.
\newblock {\em Automatica}, 44(3):726--737, 2008.

\bibitem{KD:92}
K.~Deimling.
\newblock {\em Multivalued Differential Equations}.
\newblock De Gruyter, Berlin, 1992.

\bibitem{DVD-KHJ:10}
D.~V. Dimarogonas and K.~H. Johansson.
\newblock Stability analysis for multi-agent systems using the incidence
  matrix: Quantized communication and formation control.
\newblock {\em automatica}, 46(4):695--700, 2010.

\bibitem{AFF:88}
A.~F. Filippov.
\newblock {\em Differential Equations with Discontinuous Righthand Sides},
  volume~18 of {\em Mathematics and Its Applications}.
\newblock Kluwer Academic Publishers, 1988.

\bibitem{PF-RC-FF-SZ:08}
P.~Frasca, R.~Carli, F.~Fagnani, and S.~Zampieri.
\newblock Average consensus on networks with quantized communication.
\newblock {\em International Journal of Robust and Nonlinear Control},
  19(16):1787--1816, 2009.

\bibitem{RG-RS-AT:09}
R.~Goebel, R.~G. Sanfelice, and A.~R. Teel.
\newblock Hybrid dynamical systems.
\newblock {\em {IEEE} Control Systems Magazine}, 29(2):28--93, 2009.

\bibitem{OH:79}
O.~H{\'a}jek.
\newblock Discontinuous differential equations {I}.
\newblock {\em Journal of Differential Equations}, 32:149--170, 1979.

\bibitem{AK-TB-RS:07}
A.~Kashyap, T.~Ba{\c s}ar, and R.~Srikant.
\newblock Quantized consensus.
\newblock {\em Automatica}, 43(7):1192--1203, 2007.

\bibitem{TL-MF-LX-JFZ:09}
T.~Li, M.~Fu, L.~Xie, and J.-F. Zhang.
\newblock Distributed consensus with limited communication data rate.
\newblock {\em IEEE Transactions on Automatic Control}, 56(2):279--292, 2011.

\bibitem{AN-AO-AO-JNT:09}
A.~Nedic, A.~Olshevsky, A.~Ozdaglar, and J.~N. Tsitsiklis.
\newblock On distributed averaging algorithms and quantization effects.
\newblock {\em IEEE Transactions on Automatic Control}, 54(11):2506--2517,
  2009.

\bibitem{VNVS:60}
V.~V. Nemytskii and V.~V. Stepanov.
\newblock {\em Qualitative Theory of Differential Equations}.
\newblock Dover, New York, 1960.

\bibitem{BP-SSS:87}
B.~Paden and S.~S. Sastry.
\newblock A calculus for computing {F}ilippov's differential inclusion with
  application to the variable structure control of robot manipulators.
\newblock {\em IEEE Transactions on Circuits and Systems}, 34(1):73--82, 1987.

\bibitem{MP:03}
M.~Penrose.
\newblock {\em Random Geometric Graphs}.
\newblock Oxford Studies in Probability. Oxford University Press, 2003.

\bibitem{JY-SMLV-DL:08}
J.~Yu, S.~M. LaValle, and D.~Liberzon.
\newblock Rendezvous without coordinates.
\newblock In {\em {IEEE} Conf. on Decision and Control}, pages 1803--1808,
  Canc\'un, M\'exico, December 2008.

\end{thebibliography}

\end{document}